\begin{document}

\newcommand{\mb}[1]{\marginpar{\tiny #1 --mb}}
\newcommand{\kf}[1]{\marginpar{\tiny #1 --kf}}
\newtheorem{thm}{Theorem}[section]
\newtheorem{lemma}[thm]{Lemma}
\newtheorem{cor}[thm]{Corollary}
\newtheorem{prop}[thm]{Proposition}
\newtheorem{question}[thm]{Question}
\newtheorem*{mainthm0}{Theorem \ref{log}}

\newtheorem{remark}[thm]{Remark}
\newtheorem{important remark}[thm]{Important Remark}
\newtheorem{definition}[thm]{Definition}
\newtheorem{example}[thm]{Example}
\newtheorem{fact}[thm]{Fact}
\newtheorem{convention}[thm]{Convention}

\def\diam{\operatorname{diam}}
\def\vl{\operatorname{vl}}
\def\svl{\operatorname{svl}}
\def\cal{\mathcal}
\def\R{{\mathbb R}}
\def\Z{{\mathbb Z}}

\def\Mod{{\rm Mod}}
\def\ep{{\varepsilon}}
\def\Isom{{\rm Isom}}
\def\C{{\mathcal C}}
\def\T{{\mathcal T}}
\def\TT{\overline{\mathcal T}}
\def\dTT{\partial \overline{\mathcal T}}
\def\S{{\mathcal S}}
\def\SS{{\overline{\mathcal S}}}
\def\M{{\mathcal M}}
\def\L{{\mathcal L}}
\def\D{\Delta}
\def\s{\sigma}
\def\t{\tau}
\newcommand{\cN}{{\cal N}}
\newcommand{\F}{{\mathbb F}}

\def\<{{\langle}}
\def\>{{\rangle}}
\def\red{\textcolor{red}}

\title{Handlebody subgroups in a mapping class group}

\author{Mladen Bestvina,  Koji Fujiwara\thanks{The
    first author was supported by the National
    Science Foundation grant 1308178. The second author is supported in part by
Grant-in-Aid for Scientific Research (No. 23244005, 15H05739)}} 


\maketitle

\begin{abstract}
Suppose subgroups $A,B < MCG(S)$ in the mapping class group 
of a closed orientable surface $S$ are given
and let $\langle A, B \rangle$ be the subgroup they generate.
We discuss a question by Minsky asking 
when $\langle A, B \rangle \simeq A*_{A \cap B} B$ 
for handlebody subgroups $A,B$.

\end{abstract}

\section{Introduction}
  
  Let $V$ be a handlebody and $S=\partial V$ the boundary surface. 
We have an inclusion of mapping class
groups, $MCG(V) < MCG(S)$. This subgroup is 
called a {\it handlebody subgroup} of $MCG(S)$.
The kernel of the map $MCG(V)
\to Out(\pi_1(V))$ is denoted by $MCG^0(V)$.

If $M=V_+ \cup_S V_-$ is a Heegaard splitting of a closed orientable 
$3$-manifold, we have two handlebody subgroups $\Gamma_{\pm}
=MCG(V_{\pm}) <MCG(S)$ with $S=\partial V_{\pm}$.
Minsky \cite[Question 5.1]{G} asked
\begin{question}\label{question}
When is 
$\langle \Gamma_+, \Gamma_-\rangle <MCG(S)$ equal to the amalgamation 
$\Gamma_+ *_{\Gamma_+ \cap \Gamma_-} \Gamma_-$?
\end{question}

Let $\C(S)$ be the curve graph of $S$ 
and $D_{\pm} \subset {\mathcal C}(S)$  the set of isotopy
classes of simple curves in $S$ which bound disks
in $V_{\pm}$.  The
 {\it Hempel/Heegaard distance} of the splitting
 is defined to be equal to 
$d(D_+, D_-)=\min\{d_{\cal C(S)}(x,y)\mid
x\in D_+,y\in D_-\}$.
The group $MCG(S)$ acts on $\C(S)$ by isometries.
 The stabilizer subgroups of $D_{\pm}$
 in $MCG(S)$ are $\Gamma_{\pm}$.
If  the Hempel distance is sufficiently large, depending only on $S$ ($>3$ suffices if the genus
of $S$ is at least two \cite{J}), then 
$\Gamma_+ \cap \Gamma_-$ is finite \cite{N}.

The following is the main result. It gives a (partial) negative
answer to Question \ref{question}.
\vskip 0.5cm
\noindent
{\bf Theorem \ref{main}.}
{\it
For the closed surface $S$ of genus $4g+1, g \ge 1$ and for any $N>0$
there exists a Heegaard splitting $M=V_+\cup_S V_-$
so that  $\Gamma_+ \cap \Gamma_-$ is trivial, 
$\langle \Gamma_+, \Gamma_-\rangle$ is not equal to 
$\Gamma_+ *  \Gamma_-$,  and 
$d(D_+, D_-) \ge N$.
}

We will prove Theorem \ref{main} by constructing an example.
To explain the idea we first construct a similar example 
in a certain group action on a simplicial tree (Theorem \ref{example.tree}),
then imitate it for the action of $MCG(S)$ on $\C(S)$.

By contrast,  for the subgroups $MCG^0(V_{\pm})=\Gamma_{\pm}^0<\Gamma_{\pm}$,
Ohshika-Sakuma \cite{OS} showed 
\begin{thm}\label{thm.OS}
 If $d(D_+, D_-)$ is sufficiently large (depending on $S$), 
$\Gamma_+^0 \cap \Gamma_-^0$ is trivial and 
$\langle \Gamma^0_+, \Gamma^0_-\rangle =
\Gamma^0_+ * \Gamma_-^0$.
 \end{thm}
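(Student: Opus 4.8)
The statement is a combination theorem, and the plan is to prove both halves by ping-pong, with the disk sets $D_\pm\subset\C(S)$ playing the role of the two ``vertices.''

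First, $\Gamma_+^0\cap\Gamma_-^0=1$. For large Hempel distance $\Gamma_+\cap\Gamma_-$ is finite by the cited result \cite{N}, so $\Gamma_+^0\cap\Gamma_-^0$ is finite; on the other hand $\Gamma_\pm^0$ is torsion free, since a finite order element of $MCG(V)$ is represented by a finite order homeomorphism $f$ of $V$, which has a fixed point and hence induces on $\pi_1(V)$ an automorphism that is both of finite order and inner, so trivial, and a finite order homeomorphism of a handlebody fixing a point and inducing the identity on $\pi_1$ is isotopic to the identity. A finite torsion free group is trivial, so $\Gamma_+^0\cap\Gamma_-^0=1$; in particular, once one knows that $\langle\Gamma_+^0,\Gamma_-^0\rangle$ is an amalgam of the two factors over their intersection, it is the ordinary free product.

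For the free product statement I would show directly that every reduced word $w=g_1g_2\cdots g_n$, with the $g_i$ alternately in $\Gamma_+^0\setminus\{1\}$ and $\Gamma_-^0\setminus\{1\}$, is nontrivial in $MCG(S)$. For a curve $y$ on $S$ let $\ell_\pm(y)$ denote the cyclic word length of $[y]$ in $\pi_1(V_\pm)$ with respect to a fixed free basis represented by cores of $1$-handles, so that $\ell_\pm(y)=0$ exactly when $y\in D_\pm$. Two facts drive the argument: (i) $\Gamma_\pm^0$, being the kernel of $MCG(V_\pm)\to\mathrm{Out}(\pi_1V_\pm)$, preserves $\ell_\pm$ exactly (the induced automorphism of $\pi_1(V_\pm)$ is inner); and (ii) a large Hempel distance forces every nontrivial element of $\Gamma_+^0$ to expand $\ell_-$: it carries $D_-$ entirely off $D_-$, and more generally it pushes $\ell_-$ above a threshold depending only on the Hempel distance $N$ and does not bring $\ell_-$ back below that threshold once it has been exceeded --- and symmetrically with $+$ and $-$ interchanged. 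Granting (i) and (ii), feed a fixed $x\in D_-$ through $w$ from the inside out: the first letter that is not in $\Gamma_-^0$ drives $\ell_-$ above the threshold, every later letter keeps it there (those from $\Gamma_-^0$ by (i), those from $\Gamma_+^0$ by (ii)), so $\ell_-(w(x))>0$; hence $w(x)\notin D_-$, so $w(x)\ne x$ and $w\ne1$. The cases $n\le1$ follow from injectivity of $MCG(V_\pm)\hookrightarrow MCG(S)$, and the usual normal-form bookkeeping packages all of this into $\langle\Gamma_+^0,\Gamma_-^0\rangle\cong\Gamma_+^0*\Gamma_-^0$.

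The one substantial step is (ii), the claim that nontrivial elements of $\Gamma_\pm^0$ push $D_\mp$ far off in the $\ell_\mp$-sense. This is exactly where $\Gamma_\pm^0$ outperforms $\Gamma_\pm$: Theorem~\ref{main} produces, by contrast, a nontrivial element of $\Gamma_+$ that coarsely fixes the ``bridge'' between $D_+$ and $D_-$, which is precisely what manufactures the extra relation there. To prove (ii) I would use Luft's theorem, that $\Gamma_\pm^0$ is generated by Dehn twists about meridians of $V_\pm$, to reduce to a single meridian twist $T_c$, and then combine the quantitative relation between curve-complex distance and intersection number (so that $i(y,c)$ is enormous when the Hempel distance is large, while $[c]\in\pi_1(V_-)$ is a long cyclically reduced word) with a small-cancellation or Stallings-fold estimate showing that the $i(y,c)$ inserted copies of $[c]^{\pm1}$ cannot all cancel when one rewrites $[T_cy]$ in $\pi_1(V_-)$. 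I expect this non-cancellation estimate to be the only real work; everything else is routine ping-pong.
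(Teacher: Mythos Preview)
Your ping-pong scheme has a real gap: claim (ii) is false as stated, not merely unproven. Take any $y\in D_-$ and any nontrivial $g\in\Gamma_+^0$, and set $z=g(y)$. Granting the first half of (ii), $\ell_-(z)$ exceeds your threshold. But $g^{-1}\in\Gamma_+^0$ is also nontrivial, and $\ell_-(g^{-1}z)=\ell_-(y)=0$: so $g^{-1}$ carries a curve with $\ell_-$ above the threshold to one with $\ell_-=0$. Thus the super-level set $\{\ell_->\text{threshold}\}$ is \emph{not} preserved by nontrivial elements of $\Gamma_+^0$, and your inductive step breaks the second time a $\Gamma_+^0$-letter appears in the word. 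Reducing to a single meridian twist via Luft cannot rescue this, since (ii) already fails for $T_c^{-1}$ applied to $T_c(y)$. The function $\ell_-$ is preserved by the ``wrong'' group $\Gamma_-^0$; you have no control over $\ell_-$ under $\Gamma_+^0$ beyond the first step out of $D_-$, so a one-sided height function cannot run this ping-pong.

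The paper's route is different and avoids this trap by working entirely in $\C(S)$. Theorem~\ref{thm.OS} is deduced from the general combination statement Theorem~\ref{free}: if a group acts acylindrically on a $\delta$-hyperbolic space, $A,B$ are $Q$-quasi-convex with $d(A,B)$ large, and $G_A<\mathrm{stab}(A)$, $G_B<\mathrm{stab}(B)$ are torsion-free, then $\langle G_A,G_B\rangle=G_A*G_B$. The substitute for your (ii) is a displacement bound in $\C(S)$ (Claim~1 in that proof): for every nontrivial $f\in G_A$ and every $x$ with $d(x,A)\ge L_0$ one has $d(x,f(x))>4\delta$. Torsion-freeness is spent exactly here --- if the displacement were $\le 4\delta$, acylindricity applied along a geodesic from $x$ back to $A$ would force a coincidence among $1,f,\dots,f^{N(4\delta)}$, making $f$ torsion. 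With this bound the translates of a shortest geodesic $\gamma$ between $D_+$ and $D_-$ concatenate with backtrack $\le L_0$ into a quasi-geodesic, giving $w(D_+)\neq D_+$ for every nontrivial reduced word. Nothing about $\pi_1(V_\pm)$ or $\ell_\pm$ enters; the argument uses only Masur--Minsky quasi-convexity of $D_\pm$ and Bowditch's acylindricity.
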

 That   $\Gamma^0_+ \cap \Gamma^0_-$ is trivial
 follows from the fact that 
$\Gamma^0_+, \Gamma_-^0$ are torsion free
(attributed to \cite[proof of Prop 1.7]{O} in \cite{OS}).

Here is an alternative proof, suggested by Minsky, that
$\Gamma^0<MCG(S)$ is torsion free for a handlebody $V$.
  Let $f\in \Gamma^0$ be a torsion
element. Since $f$ has finite order, we have a conformal structure on
$S$ invariant by $f$. Moreover, since $f$ extends to $V$, by the
classical deformation theory of Kleinian groups developed by Ahlfors,
Bers, Kra, Marden, Maskit, and Sullivan, we have a unique hyperbolic
structure on $V$ whose conformal structure at infinity is the
prescribed one.  Since the conformal structure is $f$-invariant, so is
the hyperbolic structure.  Moreover, since $f$ acts trivially on
$\pi_1(V)$, each geodesic in $V$ is invariant by $f$. This implies
that $f$ is identity on $V$, hence $f$ is trivial.

\vskip 0.5cm
\noindent
{\bf Acknowledgements.}
We would like to thank Yair Minsky for useful comments. 

\section{Preliminaries}

Let $S$ be a closed orientable surface. The
mapping class group $MCG(S)$ of $S$ is the group of orientation
preserving homeomorphisms modulo isotopy.
 The curve graph $\mathcal C(S)$ has a vertex
for every isotopy class of essential simple closed curves in $S$, and
an edge corresponding to pairs of simple closed curves that intersect
minimally.

It is a fundamental theorem of Masur and Minsky \cite{MM} that the
curve graph is $\delta$-hyperbolic. 
Moreover, they show that an element $F$
acts hyperbolically if and only if $F$ is pseudo-Anosov, and that the
translation length 
$$trans(F)=\lim\frac{d(x_0,F^n(x_0))}n, x_0\in \C(S)$$
of $F$ is uniformly bounded below by a positive
constant that depends only on $S$.
It follows that $F$ has  an invariant quasi-geodesic, called
an {\em axis} denoted by $axis(F)$, 
 whose quasi-geodesic constants depend only on $S$.

A subset $A \subset X$ in a geodesic space  is $Q$-{\it quasi-convex} if any 
geodesic in $X$ joining two points of $A$ is contained in the $Q$-neighborhood
of $A$. 
If $S$ is the boundary of a handlebody $V$, then the set $D\subset \C(S)$
 of curves that bound disks in $V$
   is quasi-convex (i.e. $Q$-quasi-convex for some $Q$), \cite{MM.qc}.

The stabilizer of the set $D$ in $MCG(S)$ is $MCG(V)$,
the mapping class group of the handlebody $V$, i.e.,
the group of isotopy classes of diffeomorphisms of $V$.
 
 Given a $Q$-quasi-convex subset $X$ in $\C(S)$, 
 we define the nearest point projection 
 $\C(S) \to X$.
 The
nearest point projection is not exactly a map, but a {\em coarse map},
since for a given point maybe there is more than one nearest point,
but the set of such points is bounded in diameter, and the bound depends
only on $\delta$ and $Q$, but not on $X$. 

In this paper we often take $axis(F)$ of a pseudo-Anosov element $F$ as $X$.
We may take any $F$-orbit instead of $axis(F)$.
Two pseudo-Anosov elements $F, G$ are independent (i.e.,
$\langle F, G \rangle$ is not virtually cyclic) if and only if
the nearest point projection of $axis(E)$ to $axis(F)$ has a bounded image
(cf. \cite{MM}, \cite{BF1}).

\section{Acylindrical actions}
In this section we discuss the acylindricity of a group action.
This is a key property to prove Theorem \ref{thm.OS}.

 Acylindricity was  introduced  by Sela for group actions on trees and
 extended 
by Bowditch \cite{Bo}.
Suppose $G$ acts on a metric space $X$. The action is {\it acylindrical} if
for given $R>0$ there exist $L(R)$ and $N(R)$  such that for 
any points $v,w\in X$ with $|v-w| \ge L$, 
there are at most $N$ elements $g\in G$ with
$|v-g(v)|, |w-g(w)| \le R$. (Here $|x-y|$ denotes the distance $d(x,y)$.)
Bowditch \cite{Bo} showed that the action of $MCG(S)$ on $\C(S)$
is acylindrical.

The following criterion will be useful.

\begin{lemma}\label{acylindrical}
Suppose $G$ acts on a simplicial tree $X$.
If the cardinality of the edge stabilizers is uniformly bounded
then the action is acylindrical. 
\end{lemma}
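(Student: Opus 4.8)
The plan is to show that an element $g\in G$ which displaces two far‑apart points $v,w$ only slightly must act on the geodesic $[v,w]$ essentially as a translation by a bounded \emph{integer} amount, and that two such elements with the same translation amount differ by an element fixing a long subsegment of $[v,w]$ pointwise — hence fixing an edge, hence lying in an edge stabilizer of bounded size.

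Fix $R>0$ and let $B$ be a common upper bound for the cardinalities of the edge stabilizers. Replacing any point of $X$ by the nearer endpoint of the edge containing it changes distances by at most $1$, so it is enough to bound, for \emph{vertices} $v,w$ at sufficiently large distance $L=d(v,w)$, the number of $g\in G$ with $d(v,gv)\le R$ and $d(w,gw)\le R$; the passage to arbitrary points then only inflates $R$ and $L$ by $2$. Parametrize $\gamma=[v,w]$ by arclength with $\gamma(0)=v$, $\gamma(L)=w$.

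\textbf{Step 1 (structure of an almost‑fixing element).} For such a $g$ the geodesics $\gamma$ and $g\gamma=[gv,gw]$ share a subsegment $I_g$ of length $\ge L-2R$, coherently oriented, and $g$ acts on $J:=\gamma([R,L-R])\subseteq I_g$ by an integer shift $g(\gamma(a))=\gamma(a+\sigma_g)$ with $|\sigma_g|\le R$. First, $\gamma\cap g\gamma\ne\varnothing$: two disjoint geodesics in a tree are joined by a bridge, and tracing $[v,gv]$ and $[w,gw]$ through it would force $d(v,gv)+d(w,gw)\ge 2L$, impossible once $L>R$. Next, writing $I_g=[c,d]$ with $c$ the endpoint nearer $v$ along $\gamma$, that same endpoint must be nearer $gv$ along $g\gamma$: otherwise analogous bookkeeping gives $d(v,gv)+d(w,gw)\ge L+|I_g|\ge L$, again impossible for $L>2R$. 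In this coherent case one reads off $[v,gv]=[v,c]\cup[c,gv]$ and $[w,gw]=[w,d]\cup[d,gw]$, so $d(v,c)\le R$ and $d(w,d)\le R$, whence $|I_g|\ge L-2R$; comparing the two arclength parametrizations along $I_g$ then yields the shift, which is an integer because the point $c$ where two geodesics between vertices diverge is a vertex.

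\textbf{Step 2 (shifts add, and counting).} Applying Step 1 to $h^{-1}g$ — which displaces $v$ and $w$ by at most $2R$ — and comparing actions on the common middle segment gives $\sigma_{h^{-1}g}=\sigma_{h^{-1}}+\sigma_g$, and taking the product $h^{-1}h=1$ gives $\sigma_{h^{-1}}=-\sigma_h$. Hence if $\sigma_g=\sigma_h$ then $h^{-1}g$ fixes $\gamma([2R,L-2R])$ pointwise; once $L$ is large enough that this segment contains a whole edge $e_0$ (and $e_0$ depends only on $\gamma$ and $R$), we get $h^{-1}g\in\operatorname{Stab}(e_0)$, a set of cardinality $\le B$. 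Since the integer $\sigma_g$ takes at most $2R+1$ values, choosing one element of our set for each attained value of $\sigma$ exhibits the whole set as contained in at most $2R+1$ cosets of $\operatorname{Stab}(e_0)$, so it has at most $(2R+1)B$ elements. Thus the action is acylindrical, with $N(R)$ and $L(R)$ explicit linear functions of $R$. The expected main obstacle is entirely in Step 1: ruling out the incoherently oriented overlap and the degenerate configurations (empty or one‑point intersection, $g\gamma=\gamma$), and extracting a genuine integer shift together with a single edge $e_0$ independent of $g$; everything afterward is routine bookkeeping with the constants.
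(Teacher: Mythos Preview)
Your proof is correct and follows essentially the same approach as the paper's: both arguments show that each relevant $g$ acts on the middle of $[v,w]$ as an integer shift of magnitude at most $R$, giving at most $2R+1$ possible shifts, and that two elements with the same shift differ by an element of a fixed edge stabilizer, yielding the bound $(2R+1)B$. Your Step~1 spells out in detail the tree geometry (nonempty overlap, coherent orientation, integrality of the shift) that the paper simply asserts in the line ``$g([v,w]'')\subset [v,w]'$ \ldots\ the distance between $E$ and $g(E)$ is at most $R$''.
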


\begin{proof}
Assume that every edge stabilizer contains at most $K$ elements. 
Suppose an integer $R>0$ is given. Take $L >>R$.
We will show that if $|v-w| \ge L$ then there are at most 
$(2R+1)K$ elements $g$ with $|v-gv|, |w-gw| \le R$.

Indeed, let $[v,w]$ be the geodesic from $v$ to $w$, and $[v,w]'$ and
$[v,w]''$ be its subsegments after removing the $R$-neighborhood of
$v,w$, and the $2R$-neighborhood of $v,w$, respectively.  Then by the
assumption, $g([v,w]'') \subset [v,w]'$.  Moreover, for an edge $E
\subset [v,w]''$ near the midpoint, $g(E)$ is contained in
$[v,w]''$ and the distance between $E$ and $g(E)$ is at most $R$.  Now
fix such $E$. Then there are elements $h_1, \cdots, h_n \in G'$ with
$n \le 2R+1$, where $h_1=1$, such that for any concerned element $g$,
there exists $h_i$ with $h_i g(E)=E$. But since the stabilizer of $E$
contains at most $K$ elements, there are at most $nK \le (2R+1)K$
distinct choices for $g$.
\end{proof}

To explain the background we quote a main technical result from \cite{OS}
(we will not use this result).

\begin{thm}\label{free}
Let a group $G$ act acylindrically on a $\delta$-hyperbolic space $X$.  Then for
a given $Q>0$, there exists $M>0$ with the following property.  Let
$A, B \subset X$ be $Q$-quasi-convex subsets, and $G_A < stab_G(A),
G_B < stab_G(B)$ torsion-free subgroups.  If $d_X(A,B) \ge M$ then
\\
(1)
$G_A \cap G_B$ is trivial.
\\
(2)
$\langle G_A, G_B\rangle = G_A*G_B$.
\end{thm}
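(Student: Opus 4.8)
The plan is to run a ping-pong argument for $\langle G_A,G_B\rangle$ directly on $X$, using nearest-point projections onto $A$ and onto $B$ as the bookkeeping device, and to feed in acylindricity and the torsion-free hypothesis exactly at the step where the naive argument would fail.

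I would first record the coarse geometry. Since $A,B$ are $Q$-quasi-convex in the $\delta$-hyperbolic space $X$, once $d_X(A,B)\ge M_0(\delta,Q)$ the nearest-point projections $\pi_A\colon X\to A$ and $\pi_B\colon X\to B$ are coarse maps with the following properties, all with constants depending only on $\delta,Q$: (a) $\pi_A(B)$ and $\pi_B(A)$ have uniformly bounded diameter, so, fixing $p\in\pi_A(B)$ and $q\in\pi_B(A)$, a geodesic $[p,q]$ is a ``bridge'' realizing $d_X(A,B)$ up to bounded error, and every geodesic from a point of $A$ to a point of $B$ passes uniformly close to $p$ and to $q$; (b) $\pi_A$ is coarsely $G_A$-equivariant and $\pi_B$ is coarsely $G_B$-equivariant, since these subgroups act by isometries preserving $A$, resp.\ $B$.

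For part (1): if $g\in G_A\cap G_B$ then $gA=A$ and $gB=B$, so $g$ coarsely permutes $\pi_A(B)$ and $\pi_B(A)$; by (a) and (b) this forces $d_X(gp,p)\le R_0$ and $d_X(gq,q)\le R_0$ for some $R_0=R_0(\delta,Q)$. Since $d_X(p,q)\ge d_X(A,B)\ge M$, taking $M\ge L(R_0)$ lets the acylindricity bound $N(R_0)$ apply, so $G_A\cap G_B$ is finite; being a subgroup of the torsion-free group $G_A$, it is trivial.

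For part (2): I would define an ``$A$-region'' $U_A=\{x\in X: d_X(\pi_A(x),p)>C\}$ and a ``$B$-region'' $U_B=\{x\in X: d_X(\pi_B(x),q)>C\}$, pick a basepoint $o$ near the midpoint of $[p,q]$ lying in neither, and check that for suitable $C=C(\delta,Q)$ (and $M$ large) $U_A\cap U_B=\emptyset$; then $\langle G_A,G_B\rangle=G_A*G_B$ follows from the ping-pong lemma once we know $g(X\setminus U_A)\subseteq U_A$ for every $g\in G_A\setminus\{1\}$ and symmetrically for $G_B$. For $g$ that moves $p$ by more than a threshold this is immediate from (b). The hard part, and the place where torsion-freeness is genuinely needed, is the opposite case: nontrivial elements of $G_A$ that coarsely fix $p$ really occur — Dehn twists along meridians of $V_+$ in the setting of Theorem \ref{main} are examples — so the ping-pong regions cannot simply be ``half-spaces''. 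To deal with them I would observe that an element of $G_A$ coarsely fixing a long enough initial segment of the bridge inside $A$ lies, by acylindricity applied to two far-apart points of that segment, in a set of bounded size and hence is trivial; so any nontrivial $g\in G_A$ fixing $p$ must displace some point of $A$ near $p$ by a definite amount, which one uses to refine $U_A,U_B$. Alternatively, one can pass to the action of $\langle G_A,G_B\rangle$ on the projection complex built from the orbit $\{hA,hB : h\in\langle G_A,G_B\rangle\}$, whose members have uniformly bounded mutual coarse projections thanks to the separation hypothesis (again using acylindricity to see distinct translates are ``far apart''), and read off the free-product decomposition from Bass--Serre theory, with part (1) and acylindricity giving trivial edge stabilizers. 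Arranging that all constants depend only on $\delta$, $Q$ and the acylindricity functions $L,N$ is the technical heart.
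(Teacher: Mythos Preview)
Your argument for (1) is essentially the paper's: any $g\in G_A\cap G_B$ coarsely fixes both ends of a shortest geodesic from $A$ to $B$, acylindricity bounds the number of such $g$, and since $G_A\cap G_B$ is a subgroup this bound forces it to be finite, hence trivial by torsion-freeness.

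For (2) there is a genuine gap at exactly the point you flag. You write that ``an element of $G_A$ coarsely fixing a long enough initial segment of the bridge \dots\ lies, by acylindricity, in a set of bounded size and hence is trivial.'' Two problems. First, from $g(p)\approx p$ alone you cannot conclude that $g$ coarsely fixes any further portion of $[p,q]$: $g$ preserves $A$ but not $B$, so $g[p,q]$ runs from near $p$ to $g(q)$, and $g(q)$ can go anywhere. Second, even granting that $g$ coarsely fixes two far-apart points, acylindricity only says the \emph{set} of such isometries has bounded cardinality; that set is not a subgroup, so you cannot deduce that an individual $g$ has finite order. Your proposed refinement of $U_A,U_B$ is left unspecified, and the projection-complex alternative needs the same missing lemma to verify its axioms (bounded mutual projections among the translates $hA,hB$).

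The paper's key step (its Claim~1) is the statement you are reaching for: if $1\neq f\in G_A$ and $d(x,A)\ge L_0$ then $d(x,f(x))>4\delta$. The proof manufactures the subgroup structure you need by passing to \emph{powers}: assuming $d(x,f(x))\le 4\delta$, a thin-quadrilateral argument along a shortest geodesic $\sigma$ from $x$ to $A$ (Lemma~\ref{unique}(3)) shows that every iterate $f^i$, $1\le i\le N(4\delta)$, also satisfies $d(y,f^i(y))\le 4\delta$ on a subsegment of $\sigma$ of length $\ge L(4\delta)$. Acylindricity applied to the $N(4\delta)+1$ elements $1,f,\ldots,f^{N(4\delta)}$ then forces a coincidence, so $f$ has finite order, contradicting torsion-freeness. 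This iteration is precisely the missing mechanism.

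With Claim~1 (and its $B$-analogue) in hand, the paper does not set up ping-pong regions at all. It concatenates translates $\gamma, a_1\gamma, a_1b_1\gamma,\ldots$ of the bridge $\gamma$ and observes that Claim~1 bounds the backtrack at each junction by $L_0$; since $|\gamma|\ge M\gg L_0$ the concatenation is a uniform quasi-geodesic, whence $d(A,w(A))$ grows linearly in the reduced length $|w|$, and in particular $w\neq 1$ in $G$.
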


Applying  the theorem to the action of $MCG(S)$ on $\cal C(S)$
with $A=D_+, B=D_-$, and $G_A=\Gamma^0_+ < \Gamma_+=stab(A)$, 
$G_B=\Gamma^0_- < \Gamma_-=stab(B)$, we obtain Theorem \ref{thm.OS}:
 if $d(D_+, D_-)$ is sufficiently large, depending only on $S$, 
then $\Gamma_+^0 \cap \Gamma_-^0$ is trivial and 
 $\langle \Gamma^0_+,
\Gamma^0_-\rangle = \Gamma^0_+ * \Gamma_-^0$.  

To explain the difference between the torsion-free setting of
\cite{OS} and ours, we review the proof of Theorem \ref{free}.  
%
%
We start with an elementary lemma.

\begin{lemma}\label{unique}
Let $X$ be a $\delta$-hyperbolic space and $A,B \subset X$ be 
$Q$-quasi-convex subsets.
Let $\gamma$ be a shortest geodesic between $A$ and $B$.
Then,
\begin{enumerate}[(1)]
\item
for any $x \in \gamma$ with both $d(x,A), d(x,B) > Q+2 \delta$, 
and for any shortest geodesic $\tau$ between $A$ and $B$, 
we have $d(x, \tau) \le 2 \delta$.
\item
Suppose $f$ is an isometry of $X$ with $f(A)=A, f(B)=B$.
Then for any $x \in \gamma$ with both $d(x,A), d(x,B) > Q+2 \delta$, 
we have $d(x,f(x)) \le 4 \delta$. Hence for any $x \in \gamma$, 
we have $d(x,f(x)) \le 2Q+8 \delta$.
\item
Suppose $f$ is an isometry of $X$ with $f(A)=A$.
For $x \in X\backslash A$ let $\sigma$ be a shortest
geodesic from $x$ to $A$.
For an integer $N>0$ assume $d(x,A) \ge Q+ 4\delta N$
and $d(x,f(x)) \le 4 \delta$.
Then for any point $y \in \sigma$ with 
$Q+2\delta <  d(y,A) < d(x,A) - 4\delta N - 2\delta$, we have 
 $d(y,f^i(y)) \le 4 \delta$ for $1\le i \le N$.


\end{enumerate}
\end{lemma}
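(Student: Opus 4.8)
The plan is to treat the three parts in order, using only thin-triangle/thin-quadrilateral estimates in the $\delta$-hyperbolic space $X$ together with the $Q$-quasiconvexity of $A$ and $B$.

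For part (1), I would fix a point $x \in \gamma$ with $d(x,A), d(x,B) > Q+2\delta$ and a second shortest geodesic $\tau$ from $A$ to $B$, say $\gamma$ runs from $a\in A$ to $b\in B$ and $\tau$ from $a'\in A$ to $b'\in B$. Consider the geodesic quadrilateral with sides $\gamma$, $\tau$, a geodesic $[a,a']$ inside the $Q$-neighborhood of $A$ (by quasiconvexity), and a geodesic $[b,b']$ inside the $Q$-neighborhood of $B$. Since a geodesic quadrilateral is $2\delta$-thin (each side in the $2\delta$-neighborhood of the union of the other three), $x$ lies within $2\delta$ of $\tau \cup [a,a'] \cup [b,b']$. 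If $x$ were within $2\delta$ of $[a,a']$ we would get $d(x,A)\le Q+2\delta$, contradicting the hypothesis; similarly $x$ cannot be within $2\delta$ of $[b,b']$. Hence $d(x,\tau)\le 2\delta$, which is (1).

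For part (2), apply (1) with $\tau = f(\gamma)$: since $f(A)=A$ and $f(B)=B$, $f(\gamma)$ is also a shortest geodesic between $A$ and $B$, so for $x\in\gamma$ with $d(x,A), d(x,B)>Q+2\delta$ there is $z\in f(\gamma)$ with $d(x,z)\le 2\delta$. Applying the same reasoning with the roles of $\gamma$ and $f(\gamma)$ swapped (note $f^{-1}$ also preserves $A$ and $B$, and $d(z,A),d(z,B)$ are close to those of $x$), or more simply by a symmetric thin-quadrilateral argument, one finds a point of $\gamma$ within $2\delta$ of $f(x)$; chaining these gives $d(x,f(x))\le 4\delta$. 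For the endpoint region where $d(x,A)\le Q+2\delta$ (or the $B$ side), project $x$ along $\gamma$ to the first interior point $x'$ with $d(x',A)=Q+2\delta+\epsilon$: then $d(x,x')\le Q+2\delta$ and $d(f(x),f(x'))\le Q+2\delta$, and combining with $d(x',f(x'))\le 4\delta$ yields $d(x,f(x))\le 2Q+8\delta$, the stated global bound.

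For part (3), the idea is that a point $y$ on the shortest geodesic $\sigma$ from $x$ to $A$, if it is far enough inside, must be close to $\sigma$'s translates $f^i(\sigma)$, and hence $d(y,f^i(y))$ stays small. I would argue inductively: $d(x,f(x))\le 4\delta$ together with the thin-quadrilateral formed by $\sigma$, $f(\sigma)$, a short geodesic $[x,f(x)]$, and a geodesic inside the $Q$-neighborhood of $A$ joining the feet, shows that for $y\in\sigma$ with $d(y,A)>Q+2\delta$ we have $d(y,f(\sigma))\le 4\delta$, hence $d(y,f(y))\le 8\delta$ — and a more careful tracking (staying a definite distance $4\delta$ below the previous level at each step, which is why the hypothesis reserves $4\delta N$ of room and cuts off the top $4\delta N + 2\delta$) keeps the constant at $4\delta$ rather than letting it grow geometrically through the $N$ iterations. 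Iterating $i$ times, each time losing at most $4\delta$ of available depth, gives $d(y,f^i(y))\le 4\delta$ for $1\le i\le N$ as long as $y$ lies in the stated window.

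The main obstacle I expect is part (3): controlling the accumulation of error over $N$ successive applications of $f$ so that the constant does \emph{not} degrade to something like $4\delta N$. The trick is that each $f^i(y)$ again lies close to a shortest geodesic to $A$, so one re-applies the \emph{same} $4\delta$ estimate at a slightly shallower depth rather than composing estimates naively; bookkeeping the depth loss ($4\delta$ per step, plus the initial $2\delta$ slack) is exactly what the numerical hypotheses $d(x,A)\ge Q+4\delta N$ and $Q+2\delta < d(y,A) < d(x,A)-4\delta N - 2\delta$ are designed to accommodate. Everything else is routine $\delta$-thin-polygon chasing.
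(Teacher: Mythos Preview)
Your plans for parts (1) and (2) are essentially the paper's argument. For (2), the step you call ``chaining'' needs one observation you leave implicit: on a shortest geodesic from $A$ the function $d(\cdot,A)$ parametrizes the geodesic, so if $z\in f(\gamma)$ satisfies $d(x,z)\le 2\delta$ then $|d(z,A)-d(f(x),A)|=|d(z,A)-d(x,A)|\le 2\delta$, and since $z,f(x)$ both lie on $f(\gamma)$ this gives $d(z,f(x))\le 2\delta$, whence $d(x,f(x))\le 4\delta$. This is exactly what the paper does.

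Your plan for (3), however, has a real gap. The inductive scheme you describe --- showing $d(y,f(y))\le 4\delta$, then noting that $f^i(y)\in f^i(\sigma)$ lies again on a shortest geodesic to $A$ and ``re-applying the same $4\delta$ estimate'' --- only yields $d(f^i(y),f^{i+1}(y))\le 4\delta$ at each step. Summing gives $d(y,f^i(y))\le 4\delta i$, precisely the degradation you are worried about. No bookkeeping of depth makes an inductive chain of $4\delta$ steps collapse to a single $4\delta$.

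The paper avoids this by \emph{not} inducting at the level of $y$. Instead it lets the error accumulate at $x$: the triangle inequality gives $d(x,f^i(x))\le 4\delta i\le 4\delta N$ immediately. Then, for each fixed $i$, one forms the single quadrilateral with corners $x,\,q,\,f^i(q),\,f^i(x)$ (where $q=\sigma\cap A$). The side $[x,f^i(x)]$ has length at most $4\delta N$ and the side $[q,f^i(q)]$ lies in the $Q$-neighborhood of $A$. The window hypothesis $Q+2\delta<d(y,A)<d(x,A)-4\delta N-2\delta$ is exactly what keeps $y$ more than $2\delta$ away from both of these short sides, so $2\delta$-thinness forces $d(y,f^i(\sigma))\le 2\delta$, and then the distance-to-$A$ parametrization (as in part (2)) gives $d(y,f^i(y))\le 4\delta$. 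The growing error $4\delta i$ is absorbed once, by the cut-off at the top of $\sigma$, rather than fought step by step.
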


\begin{proof}
 (1)  Draw a geodesic quadrilateral with $\gamma, \tau$ a pair of
  opposite sides. 
  By $\delta$-hyperbolicity, $x$ must be in the
  $2\delta$-neighborhood of one of the three sides 
  not equal to $\gamma$, which must
  be $\tau$, for otherwise, $d(x,A) \le Q+2\delta$ or $d(x,B) \le
  Q+2\delta$, impossible.

  (2) Put $f(\gamma)=\tau$. Then for a point $x \in
  \gamma$ satisfying the assumption, by (1) there
  is a point $p \in f(\gamma)$ with $d(x,p) \le 2 \delta$.
  But $d(p,f(x)) \le 2\delta$ since $d(x,A)=d(f(x),A)$ and 
  $|d(x,A)-d(p,A)| \le 2\delta$. By triangle inequality $d(x,f(x)) \le 4 \delta$.
   It then implies $d(x,f(x)) \le 4 \delta +2(Q+2\delta)$ for $x
  \in \gamma$ in general.
  
  (3)
  By triangle inequality, for each $1 \le i \le N$, 
  we have $d(x,f^i(x)) \le 4\delta i$.
  Let $q =\sigma \cap A$, and draw a geodesic quadrilateral 
  with the corners $x,q, f^i(q), f^i(x)$.
  Then by $\delta$-hyperbolicity,
  a concerned point $y \in \sigma$ is in the $2\delta$-neighborhood of 
  the side $f^i(\sigma)$,
  hence, as before $d(y,f^i(y)) \le 4 \delta$.
  %
   %
  %
\end{proof}

\begin{proof}[Proof of Theorem \ref{free}]
(1)
Set $L_0=L(4 \delta)+ 2Q + 4 \delta+4\delta N(4\delta)$.  
We fix a constant $M >>2L_0$.
Let $\gamma$ be a shortest geodesic between $A$ and $B$.
Let $|\gamma|$ denote the length of  $\gamma$.
Since $|\gamma| \ge M \ge L_0$, 
we have points
$x_1, x_2 \in \gamma$ such that $d(x_1, x_2) \ge L(4\delta)$ and all four of
$d(x_1,A)$, $d(x_1,B)$, $d(x_2,A)$, $d(x_2,B)$ are $> Q+2\delta$.  If $f \in
G_A \cap G_B$, then both $d(x_1,f(x_1))$, $d(x_2, f(x_2)) \le 4 \delta$ by
Lemma \ref{unique} (2), hence by the acylindricity there are at most $N(4\delta)$
such elements, so the order of $G_A\cap G_B$ is $\le
N(4\delta)$. 
In particular each element in $G_A\cap G_B$ is torsion. 
Since $G$ is torsion free, $G_A\cap G_B$ is trivial.

(2) 
\\
{\it Claim 1}. Let $1 \not= f \in G_A$.
If $d(x, A) \ge L_0$ then $d(x,f(x)) > 4\delta$.
\\
To argue by contradiction assume $d(x,f(x)) \le 4 \delta$.
Let $\sigma$ be a shortest geodesic from $x$ to $A$.
Apply Lemma \ref{unique} (3) with $N=N(4\delta)$. Then for 
each point $y \in \sigma$ with $Q+2\delta < d(y,A) < d(x,A) - 4\delta N(4\delta)
-2\delta$ and each $1 \le i \le N$,  
we have $d(y,f^i(y)) \le 4 \delta$.
Now the subsegment of $\sigma$, except for the end points, 
that those $y$ can belong to 
has length at least $L_0- 4\delta N(4\delta) - 2\delta -(Q+2\delta)
=L(4\delta)+Q$. Taking two points
near each end of the subsegment, they are moved at most $4\delta$
by $1, f, \cdots, f^N$. But by acylindricity there are at most $N(4\delta)$ such elements. Since $N=N(4\delta)$, 
the order of $f$ must be at most $N(4\delta)$, hence trivial, contradiction.

It follows from Claim 1 that if $x \in \gamma$ with $d(x, A) \ge L_0$, then $x$ 
is not contained in $N_{2\delta} (f(\gamma))$.

Similarly, 
\\
{\it Claim 2}. Let $1 \not= f \in G_B$.
If $d(x, B) \ge L_0$ then $d(x,f(x)) > 4\delta$.

\begin{figure}
\centerline{\scalebox{0.6}{\input{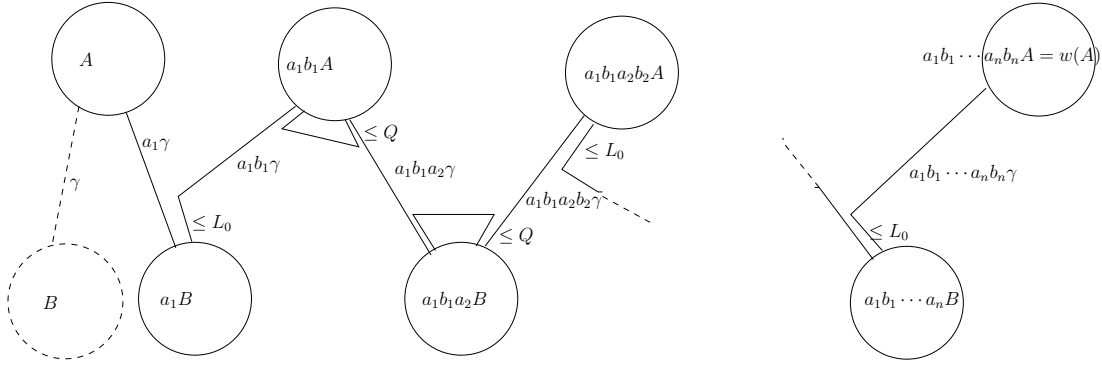}}}
\caption{There is a piecewise geodesic 
 from $A$ to $w(A)$, connecting $a_1\gamma,
a_1b_1\gamma, \cdots, a_1b_1 \cdots a_nb_n\gamma$ in this order, whose length
is at least $|w||\gamma|$. Since the backtrack at each connecting 
point is $\le L_0$, the path is a quasi-geodesic, say $(1.1, 2L_0)$-
quasi-geodesic, since $|\gamma|>>L_0$. In fact its length 
roughly gives a lower bound of the distance between $A$ and $w(A)$.}
\label{fig0}
\end{figure}


Notice that Claim 1 and Claim 2 hold
if every non-trivial element in $G_A, G_B$ 
has order at least $N(4\delta) +1$ or $\infty$.

Now, we apply $1\not=a \in G_A$
to $A \cup \gamma \cup B$, and  obtain $a(B) \cup a(\gamma) \cup A \cup \gamma 
\cup B$. Put $p=\gamma \cap A$. 
The path 
$a(\gamma) \cup [a(p),p]\cup \gamma$  is 
roughly a shortest geodesic from $aB$ to $B$.
This is because $|\gamma| \ge M >>2L_0$, Claim 1, 
and that the geodesic $[a(p),p]$
is contained in the $Q$-neighborhood of $A$.
So, $d(aB, B)$ is at least, say,  $2(|\gamma|-L_0 - 10 \delta)$.
Similarly, now using Claim 2, 
for any $1\not = b \in G_B$, $d(bA, A)$ is at least
$2(|\gamma|-L_0- 10 \delta)$.

To finish, given a reduced word in
$G_A*G_B$, $w=a_1b_1 \cdots a_nb_n$, we let the elements $b_n, a_n, \cdots, b_1, a_1$ successively
act on $A$ (or $B$ if $b_n$ is empty). See Figure \ref{fig0}.
Then as before the distance between $A$
and $w(A)$ is at least, say, $|w|(|\gamma|-2L_0-10\delta)$, where $|w|$ is the length
as a reduced word. (Here we are using a standard fact in $\delta$-hyperbolic geometry that a piecewise
geodesic with each geodesic part long and the ``backtrack'' 
at each connecting point short is not only a quasi-geodesic,
but also a geodesic with the same endpoints follows the path
except for the backtrack parts.)
In particular $A\not= w(A)$, so $w$ is not trivial in $G$.
It implies $\langle G_A, G_B\rangle = G_A*G_B$.   
\end{proof}

There is a more general version of Theorem \ref{free} in which one
does not assume that $G_A$ and $G_B$ are torsion free. To state it we
introduce the following definition. For an isometry $f:X\to X$ define the
{\it coarse fixed set} as $CFix(f)=\{x\in X\mid d(x,f(x))\leq
4\delta\}$.

Also, we will need a version for more than two subsets in $X$ to 
discuss another application. 
For that we introduce one more definition.
Let $A_1, A_2, \cdots, A_n  $ be mutually disjoint
subsets 
in a $\delta$-hyperbolic space $X$.
For a given constant $K>0$, we say that $A_i$ is {\it K-terminal} 
if
for any other $A_j, A_k$ and any shortest geodesic
$\gamma$ between $A_j, A_k$, 
the distance between $A_i$ and $\gamma$ is 
at least $K$. 

\begin{thm}\label{remark.torsion}
Let a group $G$ act acylindrically on a $\delta$-hyperbolic space $X$. Then for
a given $Q>0$, there exists $K>0$ with the following property.  Let
$A_1, \cdots, A_n \subset X$ be $Q$-quasi-convex subsets, and 
$G_{A_i} < stab_G(A_i)$ be subgroups for all $i$. 

Assume that there exists a subset $A_i'$ containing  $A_i$
for each $i$ so that:
\begin{enumerate}[(a)]
\item for every
finite order element $1\neq a\in G_{A_i}$ we have $CFix(a)\subset A_i'$
for each $i$; 
\item $d_X(A_i', A_j')\geq K$ for all pairs $i \not= j$; and 
\item
each $A_i'$ is $K$-terminal. 
\end{enumerate}
Then
\\
(1)
$G_{A_i} \cap G_{A_j}$ is trivial for all $i \not= j$.
\\
(2)
$\langle G_{A_1}, \cdots, G_{A_n}\rangle = G_{A_1}* \cdots * G_{A_n}$.
\end{thm}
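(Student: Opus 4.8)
The plan is to follow the two-subset proof of Theorem~\ref{free} essentially verbatim, with three modifications to accommodate torsion and more than two subsets. First I would fix the constant: set $K$ large compared to $L_0=L(4\delta)+2Q+4\delta+4\delta N(4\delta)$, say $K \gg 2L_0$, exactly as $M$ was chosen there. The key preliminary observation is that the conclusions of Claim~1 and Claim~2 in the proof of Theorem~\ref{free} remain valid under the weaker hypothesis already noted there: if $1\neq f\in G_{A_i}$ has infinite order, the original argument applies unchanged; if $f$ has finite order, then by hypothesis~(a), $CFix(f)\subset A_i'$, so any $x$ with $d(x,A_i')\geq$ (a bounded amount) cannot have $d(x,f(x))\leq 4\delta$. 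Thus for each $i$ and each $1\neq f\in G_{A_i}$, any point $x$ with $d(x,A_i')\geq L_0$ satisfies $d(x,f(x))>4\delta$, hence $x\notin N_{2\delta}(f(\gamma))$ for a shortest geodesic $\gamma$ from $A_i'$ into the complex. This is the replacement for ``Claim~1/Claim~2''.

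For part (1), I would argue as in Theorem~\ref{free}(1): given $i\neq j$, take a shortest geodesic $\gamma$ between $A_i'$ and $A_j'$; since $|\gamma|\geq d_X(A_i',A_j')\geq K\geq L_0$ by (b), pick $x_1,x_2\in\gamma$ with $d(x_1,x_2)\geq L(4\delta)$ and all four distances to $A_i',A_j'$ exceeding $Q+2\delta$. An element $f\in G_{A_i}\cap G_{A_j}$ stabilizes both $A_i$ and $A_j$, and hence (using quasi-convexity and Lemma~\ref{unique}(2) applied with the ambient quasi-convex sets) moves $x_1,x_2$ by at most $4\delta$; acylindricity bounds the number of such $f$ by $N(4\delta)$, so $G_{A_i}\cap G_{A_j}$ is finite. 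But a nontrivial finite-order $f\in G_{A_i}\cap G_{A_j}$ would have $CFix(f)\subset A_i'\cap A_j'$ by (a), contradicting (b) together with the fact that $x_1\in CFix(f)$ lies far from $A_i'$; hence $G_{A_i}\cap G_{A_j}$ is trivial. (One must be slightly careful that Lemma~\ref{unique}(2) as stated is for a single isometry preserving two quasi-convex sets, which is exactly our situation.)

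For part (2), I would run the ping-pong/quasi-geodesic argument of Theorem~\ref{free}(2). Given a reduced word $w=g_1g_2\cdots g_m$ with consecutive letters from distinct factors $G_{A_{i_1}},\dots,G_{A_{i_m}}$, I let the letters act successively on one of the $A_{i}'$, building a piecewise-geodesic path through the translates $g_1\gamma^{(1)}, g_1g_2\gamma^{(2)},\dots$, where $\gamma^{(k)}$ is a shortest geodesic between $A_{i_k}'$ and $A_{i_{k+1}}'$. Each geodesic segment has length $\geq K\gg L_0$; the backtrack at each junction is $\leq L_0$ by the Claim~1/Claim~2 replacement above; and here conditions (b) and the $K$-terminality in (c) are exactly what is needed to guarantee that at each junction the incoming and outgoing geodesics genuinely diverge --- $K$-terminality ensures that the shortest geodesic between $A_{i_{k-1}}'$ and $A_{i_{k+1}}'$ stays $K$-far from $A_{i_k}'$, so there is no ``shortcut'' past the set $A_{i_k}'$ and the concatenation is a $(1.1,2L_0)$-quasi-geodesic (as in Figure~\ref{fig0}). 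Consequently $d_X(A_{i_1}', w(A_{i_1}'))\geq m(K-2L_0-10\delta)>0$, so $w(A_{i_1})\neq A_{i_1}$ and $w\neq 1$ in $G$. This gives $\langle G_{A_1},\dots,G_{A_n}\rangle=G_{A_1}*\cdots*G_{A_n}$.

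The main obstacle is the bookkeeping in part (2): unlike the two-subset case, where the translates of $\gamma$ automatically alternate in a predictable pattern, with $n$ subsets one must verify that at \emph{every} junction of the piecewise-geodesic the two adjacent geodesic pieces ``turn'' by a definite amount, and this is precisely where $K$-terminality (hypothesis~(c)) is indispensable --- without it, one factor's set could lie on or near the geodesic joining two others, the path could backtrack arbitrarily, and the quasi-geodesic estimate would fail. Getting the quantified version of ``$K$-terminal $\Rightarrow$ no backtrack beyond $L_0$'' right, uniformly in the word, is the delicate point; everything else is a faithful transcription of the arguments already given for Theorem~\ref{free}.
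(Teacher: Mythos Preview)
Your proposal is correct and is exactly the approach the paper intends: the paper omits the proof entirely, stating only that it ``is a slight variation of the proof of Theorem~\ref{free},'' and your sketch supplies precisely that variation---using hypothesis~(a) to replace the torsion-free assumption in Claims~1/2, and hypothesis~(c) ($K$-terminality) to control the junctions in the ping-pong path when there are more than two subsets. One minor correction: in part~(1) you should take $\gamma$ to be a shortest geodesic between $A_i$ and $A_j$ (not $A_i'$ and $A_j'$), since Lemma~\ref{unique}(2) requires $f$ to preserve the two quasi-convex sets, and $f\in G_{A_i}\cap G_{A_j}$ stabilizes $A_i,A_j$ but not necessarily $A_i',A_j'$; the length bound $|\gamma|\geq K$ still holds because $A_i\subset A_i'$ and $A_j\subset A_j'$.
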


The proof is a slight variation of the proof of Theorem \ref{free} and
is omitted. If $G_{A_i}$ does not contain any non-trivial elements
of finite order, then we just put $A_i=A_i'$.
Our counterexamples will have the property that $A_1'\cap
A_2'\neq\emptyset$ (namely, the two coarse fixed sets intersect although
$A_1$ and $A_2$ are far away. 
cf. Claim 1 in the proof of Theorem \ref{free} (2) when $f$ has infinite
order, where $CFix(f)$ is contained in the $L_0$-neighborhood of $A$).

If the sets $A_1', \cdots, A_n'$  satisfy properties (b) and (c) above for 
a constant $K$, 
we say that they are $K$-{\it separated. }

%

\section{Example on a tree}

We will show that Theorem \ref{free} does not hold if we do not 
assume that $G_A$ and $G_B$ are torsion-free. 
We construct a counterexample in the action of $MCG(S)$ 
on $\C(S)$ (Theorem \ref{main}).

To explain the idea  we start with a counterexample
when $X$ is a  simplicial tree.
The key geometric feature is that, if we keep the previous notations,
$\gamma$ and $a(\gamma)$ may stay close along an arbitrarily 
long segment if $a$ has finite order (each point on that segment 
does not move very much by $a$).

\begin{thm}\label{example.tree}
There exists an acylindrical group action on a simplicial tree $X$  by a group
$G$ such that for any number $N>0$
there exist vertices $v, w \in X$ with $|v-w| \ge N$ such that 
$stab_G(v) \cap stab_G(w)$ is trivial
and $\langle stab_G(v), stab_G(w) \rangle$
is not equal to the free product $ stab_G(v) * stab_G(w)$.
\end{thm}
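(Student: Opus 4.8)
The plan is to build the tree $X$ and the group $G$ concretely, using a graph-of-groups / Bass--Serre picture, so that two vertex stabilizers have trivial intersection but their join fails to be a free product because of a torsion relation. The desired phenomenon, as the authors foreshadow, is that the two vertices $v,w$ are far apart yet some finite-order $a\in stab_G(v)$ fixes a long segment of $\gamma=[v,w]$ starting at $v$, and a finite-order $b\in stab_G(w)$ fixes a long segment starting at $w$; if those fixed segments overlap, then $ab$ (or a conjugate) can be forced to act trivially on $X$, so that a reduced word in the free product dies in $G$. So the first step is to choose a finite group $H$ and realize $X$ as (a subdivision of) the Bass--Serre tree of an amalgam or an HNN extension in which $H$ appears as the stabilizer of a long path; concretely one can take $X$ to be the tree of a graph of groups that is a path $P_0 - P_1 - \cdots - P_m$ of edges all carrying the same finite edge group $H$, with vertex groups slightly larger than $H$ at the two ends. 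Because all edge stabilizers are (conjugates of) the fixed finite group $H$, their cardinalities are uniformly bounded, so Lemma \ref{acylindrical} immediately gives that the action is acylindrical; this disposes of the acylindricity requirement almost for free.

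Second, I would arrange the two end vertex groups so that $stab_G(v)$ contains a finite-order element $a\notin H$ and $stab_G(w)$ contains a finite-order element $b\notin H$, while $stab_G(v)\cap stab_G(w)$ is forced to be trivial — this last point follows as long as no nontrivial element of $G$ fixes both $v$ and $w$, which one can guarantee from the normal-form description of elements of a fundamental group of a graph of groups (a nontrivial element fixing two distinct vertices must lie in every edge group along the path, and by choosing the edge inclusions generically — e.g. so that the intersection of all the conjugated copies of $H$ is trivial — one kills this). Third, and this is the crux, I want a relation: I would choose the vertex groups at the two ends and the two edges adjacent to them so that $a$ fixes the edge at $v$'s end, $b$ fixes the edge at $w$'s end, and, crucially, the product $ab$ — or better a word like $a b a^{-1} b^{-1}$, or $(ab)^k$ — equals the identity in $G$, or at least equals an element that acts trivially on $X$, even though $a$ and $b$ generate a free product of two finite cyclic groups inside... no: the point is precisely that $\langle a\rangle * \langle b\rangle$ should \emph{not} inject. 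So I would instead take $G$ itself to be a quotient in which a single explicit relator involving both $a$ and $b$ holds; for instance let $G$ be a finite group (or a group with a nontrivial finite quotient) in which $a,b$ have finite order and $a,b$ do not generate their free product, and let $X$ be a Bass--Serre tree on which $G$ acts with the required far-apart fixed vertices. The cleanest incarnation: take $G = H_1 *_{H} H_2$ a genuine amalgam where $H$ is finite, $H_1,H_2$ finite, built so that $stab(v)=H_1$ and $stab(w)$ is a far-away conjugate $g H_2 g^{-1}$; then $stab(v)*stab(w) = H_1 * gH_2g^{-1}$ maps to $G$ and one checks by a normal-form/length computation in the amalgam that this map is not injective precisely because a nontrivial reduced word in $H_1 * gH_2g^{-1}$ can simplify in $G$.

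Concretely the construction I expect to use: fix a small finite group $H$, let $H_1 = H \times \langle a\rangle$ and $H_2 = H\times\langle b\rangle$ with $a,b$ of finite order, form $G = H_1 *_H H_2$ amalgamated over $H$ (included as $H\times 1$ in each factor), and let $X$ be the Bass--Serre tree; set $v$ the vertex fixed by $H_1$ and $w = u\cdot v'$ where $v'$ is the vertex fixed by $H_2$ and $u\in G$ is chosen with $d(v,w)\geq N$ (such $u$ exists since the tree has infinite diameter). Then $stab_G(v)=H_1$, $stab_G(w)=uH_2u^{-1}$, their intersection is the intersection of the two vertex stabilizers, which in a Bass--Serre tree is the pointwise stabilizer of the geodesic $[v,w]$, an intersection of conjugates of $H$ along the path; choosing $H$ and the edge inclusions so that this intersection is trivial handles (the triviality of) $stab_G(v)\cap stab_G(w)$. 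The failure of the free product is the content of a relation manufactured by hand: the reduced word $a\cdot (u b u^{-1})$ need not be reduced once one expands $u$ in normal form and uses $a\in H_1$, $b\in H_2$ — so I would pick $u$ to be a specific alternating word of the amalgam so that left-multiplying by $a\in H_1$ and conjugating-by-$u$ of $b\in H_2$ produces cancellations witnessing $\langle stab_G(v), stab_G(w)\rangle\neq stab_G(v)*stab_G(w)$.

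The main obstacle, and where I'd spend the real work, is the simultaneous arrangement of all three demands with the \emph{same} configuration: (i) $v,w$ arbitrarily far apart, (ii) $stab_G(v)\cap stab_G(w)=1$, and (iii) the join of the two stabilizers strictly smaller than the abstract free product. Demand (ii) pushes the overlap of the two fixed sets to be \emph{small}, while the mechanism producing (iii) — a torsion element of one stabilizer coinciding with, or colliding with, a torsion element of the other along the path — seems to \emph{want} a large overlap; reconciling them is exactly the subtlety the authors flag in the paragraph after Theorem \ref{remark.torsion} (the two coarse fixed sets $A_1',A_2'$ intersect although $A_1,A_2$ are far away). The resolution I anticipate is to make the coincidence happen not on the nose at a single vertex but via a conjugating element: one designs $G$ so that there is $c\in G$ with $c$ fixing the midpoint edge but $c\notin$ (the pointwise stabilizer of $[v,w]$), and with $c$ expressible both as a word in $stab_G(v)$-type generators times $stab_G(w)$-type generators; then the nontrivial reduced word equal to $c$ (or to $1$) in $G$ gives the failure of the free product, while $stab_G(v)\cap stab_G(w)$ can still be trivial because that intersection only sees elements fixing \emph{all} of $[v,w]$, not merely the middle edge. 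Verifying acylindricity (Lemma \ref{acylindrical}, via the uniform bound on edge-stabilizer cardinality, which holds automatically in a graph of groups with finite edge groups), the distance estimate, the triviality of the intersection, and the explicit relation are all then routine once the algebraic data $(H, H_1, H_2, a, b, u)$ are pinned down.
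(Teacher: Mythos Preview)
Your general framework is sound (Bass--Serre tree, finite edge groups giving acylindricity via Lemma~\ref{acylindrical}, torsion elements producing the obstruction), but the concrete construction you propose cannot be made to work, and the missing idea is exactly the one the paper supplies. In your amalgam $G=H_1*_H H_2$ with $H_i=H\times\langle a_i\rangle$ and $H\hookrightarrow H_i$ as $H\times 1$, the subgroup $H$ is normal in each factor, hence normal in $G$, hence acts trivially on the Bass--Serre tree; so $H\leq stab_G(v)\cap stab_G(w)$ for \emph{every} pair of vertices, and condition (ii) fails unless $H=1$. But if $H=1$ then $G=H_1*H_2$ acts on a tree with trivial edge stabilizers, and a standard ping-pong argument (split the tree at the midpoint of $[v,w]$) shows $\langle stab(v),stab(w)\rangle$ \emph{is} the free product for any two distinct vertices, so condition (iii) fails. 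Tweaking the edge inclusions to be non-normal does not help: you never exhibit a single relation in $G$ between an element of $stab(v)$ and an element of $stab(w)$; the hoped-for ``cancellations in normal form'' are wishful, since in an amalgam a reduced alternating word with all syllables outside the amalgamated subgroup is nontrivial.

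The paper's resolution is to \emph{build the relation into the graph of groups}: use a three-vertex segment with middle vertex group $\langle a\rangle\times\langle b\rangle$ (later $\langle a\rangle\times\langle b\rangle\times\langle t\rangle$) and edge groups $\langle a\rangle$, $\langle b\rangle$. The relation $ab=ba$ now holds in $G$ by construction. The two vertices are $w$ (stabilizer containing $b$) and a translate $(ts)^n(x)$ of the middle vertex (stabilizer containing $a$, since $a$ commutes with both translating elements $t,s$); their distance grows with $n$. The intersection of stabilizers is trivial because the geodesic $[w,(ts)^n(x)]$ begins with the edge of group $\langle b\rangle$ and continues along edges of group $\langle a\rangle$, and $\langle a\rangle\cap\langle b\rangle=1$. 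This is precisely the ``overlap at the middle, not along the whole path'' phenomenon you correctly anticipated in your last paragraph---but it requires a third vertex group carrying an explicit commutation, not a two-factor amalgam.
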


\begin{proof}
We first construct an example with $N=2$.
Start with abelian groups $A, B$ with non-trivial torsion elements $a \in A$
and $b \in B$, for example, $A, B \simeq \Z/2\Z$.

Define the group
$$G=A*_{\langle a\rangle} (\langle a\rangle \times \langle b \rangle)
*_{\langle b \rangle} B$$ and let $T$ be the Bass-Serre tree of this
graph of groups decomposition.

There are two vertices $v,w$ in $T$ at distance two whose stabilizers
are $A$ and $B$. The intersection $A\cap B$ is trivial in $G$ since
$\langle a\rangle \cap \langle b \rangle$ is trivial in $\langle
a\rangle \times \langle b \rangle$. On the other hand, $\<A,B\rangle
=G$ is not equal to $A*B$ since $G$ is the quotient of the free
product $A*B$ by the relation $ab=ba$.  The geometric reason for why
$\<a,b\rangle $ is not equal to $\langle a\rangle*\langle b \rangle$
is that $Fix(a)$ and $Fix(b)$ intersect non-trivially in $T$.

The action on $T$ is acylindrical by Lemma \ref{acylindrical} since
the edge stabilizer is a conjugate of $\langle a\rangle$ or $\langle b
\rangle$.

To produce an acylindrical action that works for all $N>1$
we modify the previous example.
Take the direct product of $\Z=\<t\rangle $ and  the subgroup $\langle a\rangle \times \langle b \rangle$
in $G$.
One can write the new group as 
$$A*_{\langle a\rangle} \{\langle a\rangle\times\langle b \rangle\times \langle t\rangle \} *_{\langle b \rangle} B$$

Further, add a new element $s$ to $A$ with a relation $sa=as$
to get $A'=A\times\<s\>$ and set $C=\langle a\rangle\times \langle b
\rangle \times \langle t\rangle $ and
$$G'=A'*_{\langle a\rangle} C *_{\langle b \rangle} B$$
This is a two edge decomposition.

In the Bass-Serre tree of this decomposition, consider the
``fundamental domain'', i.e. the subtree spanned by two vertices $v,w$
at distance two with stabilizers $A'$ and $B$ respectively. Let
$x$ be the vertex between them with stabilizer $C$. See Figure
\ref{fig1}.

Now consider the ray based at $x$ that contains the vertices $x$,
$t(v)$, $ts(x)$, $tst(v)$, $tsts(x),\cdots$. The stabilizer of every
edge on this ray is $\<a\>$ since both $t$ and $s$ commute with $a$.

So, the intersection of $B$, the vertex group of $w$,
and any of the vertex groups along the ray except for $C$ 
is 
$\langle a\rangle \cap \langle b \rangle =1$.

But for each $n>0$ the subgroup $\<C^{(ts)^n}, B\><G'$
is not equal to $C^{(ts)^n} * B$
since $ a\in C^{(ts)^n}$ and $b\in B$ generate $\<a\>\times\<b\>$ and
not $\<a\>*\<b\>$.

\begin{figure}
\centerline{\scalebox{0.8}{\input{tree.pstex_t}}}
\caption{}
\label{fig1}
\end{figure}

The action of $G'$  is acylindrical by Lemma \ref{acylindrical}
since any edge stabilizer is a conjugate of $\langle a \rangle$
or $\langle b \rangle$.
\end{proof}

\section{Example on $\C(S)$ and proof of theorem}
We will prove the main theorem. 
\begin{thm}\label{main}
For the closed surface  $S$ of genus $4g+1$ with $g \ge 1$  and for any $N>0$
there exists a Heegaard splitting $M=V_+\cup_S V_-$
so that  $\Gamma_+ \cap \Gamma_-$ is trivial, 
$\langle \Gamma_+, \Gamma_-\rangle$ is not equal to 
$\Gamma_+ *  \Gamma_-$,  and 
$d(D_+, D_-) \ge N$.

\end{thm}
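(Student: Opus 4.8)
The plan is to imitate the tree example (Theorem~\ref{example.tree}) inside the action of $MCG(S)$ on $\C(S)$, using Theorem~\ref{remark.torsion} as the analogue of Lemma~\ref{acylindrical} to control intersections. The combinatorial skeleton we want to reproduce is: two handlebody subgroups $\Gamma_+,\Gamma_-$ whose ``coarse fixed sets'' of certain finite-order elements overlap, so that $\langle\Gamma_+,\Gamma_-\rangle$ fails to be the free product, while the disk sets $D_\pm$ themselves remain far apart in $\C(S)$ so that $\Gamma_+\cap\Gamma_-=1$ (by \cite{N} and Theorem~\ref{remark.torsion}(1)) and $d(D_+,D_-)\ge N$. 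The genus $4g+1$ and the factor of $4$ are there because we will build $S$ as a connected sum of several pieces: a ``core'' piece carrying a common finite-order symmetry $\rho$ shared by both handlebodies, plus pieces on which $\Gamma_+$ and $\Gamma_-$ act by high-power (pseudo-Anosov-like) elements that push $D_\pm$ far apart in $\C(S)$.

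The key steps, in order. First, fix a finite-order mapping class $\rho$ of $S$ (an order-two or order-$p$ symmetry, the analogue of $a$ and $b$ in the tree example) that extends over \emph{both} handlebodies $V_+$ and $V_-$; one arranges this by taking $S$ to have a rotational symmetry under which the chosen Heegaard surface is invariant and both sides are preserved, so $\rho\in\Gamma_+\cap\Gamma_-$. Second, choose handlebody subgroups $\Gamma_\pm=MCG(V_\pm)$ and inside them large powers of carefully chosen mapping classes (Dehn twists along disk-bounding curves, or hyperelliptic-type involutions composed with twists) so that $D_+$ and $D_-$ are driven to Hempel distance $\ge N$: this is the standard way of producing high-distance Heegaard splittings (e.g.\ via the work of Hempel, or by composing with a high power of a pseudo-Anosov of $S$ preserving neither disk set), and we need it compatible with the presence of $\rho$. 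Third, verify the separation hypotheses (b) and (c) of Theorem~\ref{remark.torsion}: take $A_1=D_+$, $A_2=D_-$ (each $Q$-quasi-convex by \cite{MM.qc}), and $A_i'$ a bounded neighborhood of $D_i$ that contains $CFix$ of every finite-order element of $\Gamma_i$ \emph{except} those, like $\rho$, whose coarse fixed set is genuinely large; the point of the construction is exactly that $CFix(\rho)$ is \emph{not} contained in either $A_i'$, mirroring ``$Fix(a)\cap Fix(b)\ne\emptyset$'' in the tree. Fourth, conclude: since $\rho\in\Gamma_+$ and $\rho\in\Gamma_-$ are realized by the same element of $MCG(S)$, the subgroup $\langle\Gamma_+,\Gamma_-\rangle$ satisfies a relation identifying the copy of $\langle\rho\rangle$ in $\Gamma_+$ with the copy in $\Gamma_-$, which is impossible in $\Gamma_+*\Gamma_-$; hence $\langle\Gamma_+,\Gamma_-\rangle\ne\Gamma_+*\Gamma_-$. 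Triviality of $\Gamma_+\cap\Gamma_-$ follows once $d(D_+,D_-)$ exceeds the threshold of \cite{N} — here one must double-check that $\rho$, though a common symmetry, does \emph{not} lie in $\Gamma_+\cap\Gamma_-$ as mapping classes of $S$; rather, $\rho$ restricted to $V_+$ and $\rho$ restricted to $V_-$ are distinct elements of $MCG(V_+)$ and $MCG(V_-)$ that happen to have the same image in $MCG(S)$ only after the amalgamating identification is \emph{forced}, so that in the free product they are distinct but in $\langle\Gamma_+,\Gamma_-\rangle$ they coincide. (This is the precise surface analogue of the element $a$ living in both $A'$ and $C$, versus $B$, in the proof of Theorem~\ref{example.tree}.)

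The main obstacle I expect is the third step: producing, simultaneously, (i) a genuine finite-order symmetry $\rho$ extending over both handlebodies whose coarse fixed set in $\C(S)$ is large (so the free-product relation really fails), and (ii) enough ``room'' in the two handlebody subgroups to separate $D_+$ from $D_-$ by distance $N$ while keeping hypotheses (b), (c) of Theorem~\ref{remark.torsion} intact. Concretely one wants to choose $S$ as roughly $4g+1$ handles arranged so that a rotation cyclically permutes blocks of handles, fixing one special handle (or a sphere with handles) that hosts $\rho$, while the remaining blocks carry the freely-acting pseudo-Anosov data used to inflate the Hempel distance — the arithmetic $4g+1$ presumably records that we need an even number of exchanged handles on each side plus the core. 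Verifying that the neighborhoods $A_i'$ can be chosen $K$-separated and $K$-terminal for the $K$ coming from Theorem~\ref{remark.torsion} — i.e.\ that no short geodesic between $D_+$ and $D_-$ strays near the other disk set, and that the only ``bad'' finite-order elements are the expected symmetries — is where the hyperbolic geometry of $\C(S)$ and the quasi-convexity estimates of \cite{MM.qc} have to be pushed; this is the technical heart of the argument, and I would expect it to occupy the bulk of the proof, with the algebra (step four) and the distance bound (the appeal to \cite{N}) being comparatively formal.
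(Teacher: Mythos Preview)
Your proposal contains a genuine gap that makes the argument collapse. You posit a \emph{single} finite-order element $\rho$ that extends over both handlebodies, so $\rho\in\Gamma_+$ and $\rho\in\Gamma_-$; but $\Gamma_\pm$ are by definition subgroups of $MCG(S)$, so this forces $\rho\in\Gamma_+\cap\Gamma_-$, directly contradicting the conclusion $\Gamma_+\cap\Gamma_-=1$. Your attempt to escape this (``$\rho$ restricted to $V_+$ and $\rho$ restricted to $V_-$ are distinct elements \dots\ that happen to have the same image in $MCG(S)$ only after the amalgamating identification is forced'') is not coherent: the amalgamating identification is not something one chooses, it is the literal intersection inside $MCG(S)$. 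Look again at the tree example: the obstruction to the free product is not one element living in both vertex groups, it is \emph{two} elements $a\in A$ and $b\in B$ that commute in the ambient group while $\langle a\rangle\cap\langle b\rangle=1$.

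The paper implements exactly this. One takes a $(\Z/2\Z)^2$-cover of handlebodies $V'\to V$ with deck group $\langle a_1,a_2\rangle$ (this is where the genus $4g+1$ comes from: a four-fold cover, not a rotational symmetry of blocks of handles), so that $a_1,a_2$ are commuting involutions of $S'=\partial V'$, each extending over $V'$. One then finds pseudo-Anosov maps $F_i$ on the intermediate quotients $S'/a_i$ whose lifts $F_i'$ to $S'$ are Hempel for $D'$ and satisfy $N(F_i')=\langle a_i\rangle$ (this uses the Masur--Smillie singularity-type control, Lemma~\ref{elementary}, and is the technical heart you are missing). Setting $D_\pm=F_i'^N(D')$ gives two disk sets with $a_i\in\Gamma_i$ (since $F_i'$ centralizes $a_i$), so $a_1a_2=a_2a_1$ kills the free product. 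Triviality of $\Gamma_+\cap\Gamma_-$ is \emph{not} obtained from \cite{N} or Theorem~\ref{remark.torsion} but by a direct WPD argument: any $f\in\Gamma_+\cap\Gamma_-$ nearly fixes a long geodesic that fellow-travels both $axis(F_1')$ and $axis(F_2')$, forcing $f\in E(F_1')\cap E(F_2')$, hence $f\in N(F_1')\cap N(F_2')=\langle a_1\rangle\cap\langle a_2\rangle=1$.
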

We will need two  properties of pseudo-Anosov elements
to prove the theorem (Lemma \ref{elementary}, Lemma \ref{hempel}).

\subsection{Pseudo-Anosov elements by Masur-Smillie}
Let $S$ be a closed surface and 
$F$ a pseudo-Anosov mapping class on $S$.
The {\it elementary closure} of $F$
is the subgroup $E(F)$ of $MCG(S)$ that
consists of mapping classes preserving 
the stable and unstable foliations of $F$. Equivalently, $E(F)$ is the
centralizer of $F$ in $MCG(S)$. 
The group $E(F)$ contains a
unique finite normal subgroup $N(F)$ such that $E(F)/N(F)$ is infinite
cyclic. Note that $E(F^k)=E(F)$ and $N(F^k)=N(F)$ for every $k\neq
0$. 

If $S'\to S$ is a regular cover with deck group $\Delta$ and if
$F:S\to S$ is a pseudo-Anosov mapping class with $N(F)=1$, then we
certainly have $N(F')\supseteq\Delta$ for any lift $F':S'\to S'$ of
any power of $F$, but strict inclusion may hold. It is an interesting
question whether one can construct $F$ so that equality holds for all
regular covers. We call such $F$ {\it prime} and we discuss a
construction of prime pseudo-Anosov mapping classes in Section \ref{prime.section}.
For our purposes we need quite a bit less.

\begin{lemma}\label{elementary}
Suppose $F:S \to S$ is a pseudo-Anosov mapping class
whose stable and unstable foliations have two singular points, 
one of order $p$, the other of
order $q$, with both $p,q$ odd and relatively prime. Let $S'\to S$ be
a double cover with deck group $\<a\>$ and $F'$ a lift of a power
of $F$ to $S'$.
 Then $N(F)=1$ and
$N(F')=\<a\>$.
\end{lemma}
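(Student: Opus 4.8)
The plan is to analyze the finite normal subgroups via the structure of the orbifold quotients and Euler-characteristic/branching constraints, using the fact that $N(F)$ acts on $S$ preserving the (singular) flat structure coming from the quadratic differential, hence permutes the two singular points of orders $p$ and $q$. First I would record the standard fact (Masur--Smillie; see also the discussion of $E(F)$, $N(F)$ above) that any $g\in N(F)$ fixes setwise the stable and unstable foliations of $F$ together with their singular data, so $g$ induces a permutation of the singular set. Since the two singularities have distinct cone orders $p\neq q$, each is fixed by $g$; moreover $g$ has finite order and a homeomorphism of finite order fixing a point is locally a rotation there. The key numerical input is that $p$ and $q$ are odd: I would argue that the quotient $S/N(F)$ is a closed orbifold with at most two cone points over the two fixed singular points (plus possibly others from free orbits, which I will rule out), and then a Riemann--Hurwitz count together with the constraint that the local rotation at each of the two marked points must be compatible with the prong structure forces $|N(F)|$ to divide both $p$ and $q$ — but more carefully, the order of the rotation at the $p$-pronged point divides $p$ (it must permute the $p$ prongs and fix the unstable direction type, so it acts on the prongs as a rotation of order dividing $p$) and similarly divides $q$; since $\gcd(p,q)=1$ this forces the rotation to be trivial at both singular points, and since $g$ is determined by its local behavior at a singular point (a finite-order homeomorphism fixing a point with trivial derivative-type data is the identity, by the standard local linearization / Nielsen realization argument, or simply because it fixes an open set and $S$ is connected so fixed set open and closed), we get $g=1$, i.e. $N(F)=1$.

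For the second assertion I would pass to the double cover $S'\to S$ with deck group $\<a\>$, $a^2=1$, and take $F'$ a lift of $F^k$ for suitable $k$ (chosen so that a lift exists and is pseudo-Anosov — this holds for some $k$ since $F$ preserves the foliations and hence some power lifts). The stable/unstable foliations of $F'$ are the pullbacks of those of $F$, so $E(F')$ preserves the pullback foliations; the deck group $\<a\>$ clearly lies in $N(F')$ since $a$ commutes with $F'$ (it conjugates $F'$ to another lift of $F^k$, and after adjusting it can be taken to commute — here I would be slightly careful and either choose $k$ so that $a$ centralizes $F'$, or note $\<a\>\le N(F')$ because $a$ preserves the foliations and is finite order, acting trivially on the cyclic quotient). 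So $\<a\>\subseteq N(F')$. For the reverse inclusion $N(F')\subseteq\<a\>$: any $g'\in N(F')$ preserves the pullback foliations, whose singular points are the preimages of the two singular points of $F$. The point of order $p$ on $S$ has preimage either one point (of order $2p$, if $a$ fixes it and acts as a rotation) or two points (each of order $p$); same for $q$. Since $p,q$ are odd, $2$ divides neither, and one checks that the ramification pattern of the double cover over these points is constrained; in any case $g'$ permutes these preimages, and composing with a suitable power of $a$ we may assume $g'$ descends to a finite-order homeomorphism $\bar g$ of $S$ preserving the foliations of $F^k$ (hence of $F$), i.e. $\bar g\in N(F)=1$. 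Therefore $g'$ is a deck transformation, so $g'\in\<a\>$, giving $N(F')=\<a\>$.

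The main obstacle I expect is the step ruling out trivial/extra cone points and controlling the local rotation numbers precisely — i.e. showing the only finite-order homeomorphism preserving the foliations with the stated singularity data is the identity. The delicate point is that a priori $g\in N(F)$ could act on the $p$-pronged singularity by a rotation that permutes prongs \emph{and} swaps the stable foliation direction with the unstable foliation direction locally; since $E(F)$ is allowed to swap stable and unstable foliations in general (it is the full centralizer-related group), one must use that $N(F)$ is the \emph{normal} subgroup with infinite cyclic quotient to argue that elements of $N(F)$ actually preserve each of the two foliations individually (they lie in the kernel of the map to the at-most-$\Z/2$ recording the swap, combined with the kernel to $\Z$), so the local action at each singular point is a genuine rotation of the $p$ (resp.\ $q$) prongs, of order dividing $p$ (resp.\ $q$); then $\gcd(p,q)=1$ closes it. I would also double-check the parenthetical remark in the paper's setup that $N(F^k)=N(F)$, which lets us freely replace $F$ by a power when arranging lifts, and the existence of a pseudo-Anosov lift $F'$ to the double cover (any finite cover has a power of $F$ lifting, and the lift is again pseudo-Anosov with the pulled-back foliations).
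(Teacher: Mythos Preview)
Your argument for $N(F)=1$ is essentially the paper's: an element of $N(F)$ is realized as an isometry of the flat metric, fixes each singular point (since $p\neq q$), has order dividing $p$ at one and $q$ at the other, hence is trivial since $\gcd(p,q)=1$. The Riemann--Hurwitz digression is unnecessary, and your worry about elements swapping the two foliations is unfounded: the paper defines $E(F)$ as the centralizer of $F$, and any $g$ swapping stable and unstable would conjugate $F$ to $F^{-1}$, so elements of $N(F)\subset E(F)$ automatically preserve each foliation separately.

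The second part has a real gap. You write that ``composing with a suitable power of $a$ we may assume $g'$ descends to a finite-order homeomorphism $\bar g$ of $S$.'' But descent to $S=S'/\langle a\rangle$ requires $g'a=ag'$, and composing with $a$ does nothing to arrange this: $ag'$ commutes with $a$ if and only if $g'$ does. Nothing you have said forces an arbitrary $g'\in N(F')$ to normalize the deck group. (One can in fact show $[g',a]=1$ by checking that the commutator fixes all four cone points on $S'$ and then invoking $\gcd(p,q)=1$, but that extra step is precisely what is missing, and it already contains the substance of the direct argument.)

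The paper avoids descent entirely and argues on $S'$. The cover is unramified, so $S'$ has two $p$-prong and two $q$-prong singularities, and the free involution $a$ swaps each pair. Given $g'\in N(F')$, after replacing $g'$ by $ag'$ if necessary one may assume $g'$ fixes both $p$-prong points, whence $g'^p=1$. One cannot simultaneously arrange that $g'$ fixes the $q$-prong points, but $g'^2$ certainly does, so $g'^{2q}=1$. Since $p$ is odd and $\gcd(p,q)=1$, we have $\gcd(p,2q)=1$, hence $g'=1$. This is where the oddness hypothesis is actually used; your sketch invokes oddness only vaguely in discussing ramification patterns, and never isolates the key numerical fact $\gcd(p,2q)=1$.
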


\begin{proof}
We first argue that $N(F)=1$. Suppose $g\in N(F)$. Then $g$ can be
represented by a homeomorphism, also denoted $g:S\to S$, that
preserves both measured foliations. In particular, $g$ is an isometry
in the associated flat metric on $S$ with cone type singularities. The
homeomorphism $g$ fixes both singular points and satisfies both
$g^p=1$ and $g^q=1$, since an isometry that fixes a nonempty open set
is necessarily the identity. Since $p,q$ are relatively prime it
follows that $g=1$.

We now argue that $N(F')=\<a\>$.
We have $\<a\> < N(F')$.
Let $g \in N(F')$, then $g:S'\to S'$ is a finite order
homeomorphism that preserves the lift of stable and unstable foliations
of $F$. 
Composing
with $a$ if necessary we may assume that $g$ fixes both $p$-prong
singularities. 
Arguing as above, we see that 
 $g^p=1$. Since $g^2$ fixes both $q$-prong
 singularities, similarly we have $g^{2q}=1$ and since
$(p,2q)=1$ we have $g=1$.
We showed $N(F')=\<a\>$.
\end{proof}

\begin{cor}\label{MS.elementary}
Pseudo-Anosov mapping classes as in Lemma \ref{elementary} exist in
every genus $\geq 3$.
\end{cor}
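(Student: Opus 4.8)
The plan is to deduce the corollary from the realization theorem of Masur and Smillie for strata of quadratic differentials, together with a suitable arithmetic choice of the prong orders. First I would fix $g\ge 3$ and take $p=2g+1$ and $q=2g-1$. These are odd, and any common divisor of $p$ and $q$ divides $p-q=2$ and hence equals $1$, so $\gcd(p,q)=1$; thus the arithmetic hypotheses of Lemma \ref{elementary} hold, and it suffices to produce a pseudo-Anosov $F$ on the closed surface $S$ of genus $g$ whose stable and unstable foliations have exactly two singular points, one $p$-pronged and one $q$-pronged.

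Next I would pass to quadratic differentials. The horizontal and vertical measured foliations of a holomorphic quadratic differential with a zero of order $k\ge -1$ have a $(k+2)$-pronged singularity there (a simple pole, $k=-1$, producing a $1$-prong; a regular point, $k=0$, no singularity), and the Euler--Poincar\'e relation on a closed genus $g$ surface reads $\sum_i k_i=4g-4$. So a $p$-pronged singularity is a zero of order $p-2$, and since $(p-2)+(q-2)=(2g-1)+(2g-3)=4g-4$, the prescribed singularity data is exactly that of a quadratic differential lying in the stratum $\mathcal Q(2g-3,\,2g-1)$ on a genus $g$ surface. Because $2g-3$ and $2g-1$ are odd, this stratum contains no global square of an abelian differential, so its horizontal and vertical foliations are automatically non-orientable --- as they must be when $p,q$ are odd.

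Finally I would invoke Masur and Smillie: apart from a short explicit list of exceptional strata, all in genus at most $2$, every stratum $\mathcal Q(k_1,\dots,k_n)$ with $k_i\ge -1$ and $\sum_i k_i=4g-4$ is non-empty and carries a quadratic differential invariant under a pseudo-Anosov mapping class, whose stable and unstable foliations are then the horizontal and vertical foliations of that differential. For $g\ge 3$ the zeros $2g-3,\,2g-1$ are both $\ge 3$, so $\mathcal Q(2g-3,\,2g-1)$ is not among the exceptions, the required $F$ exists, and Lemma \ref{elementary} then delivers $N(F)=1$ together with the stated behavior under double covers. The step needing the most care is purely bookkeeping: quoting the Masur--Smillie result in the form that yields a pseudo-Anosov actually supported on the prescribed stratum (not merely non-emptiness of the stratum), and checking that $\mathcal Q(2g-3,\,2g-1)$ avoids the list of exceptional low-genus strata; both are immediate once the statement is written out, since our zero orders are at least $3$.
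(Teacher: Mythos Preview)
Your proposal is correct and follows essentially the same approach as the paper: both choose $p=2g+1$, $q=2g-1$ and invoke Masur--Smillie \cite{MS} for the existence of the required pseudo-Anosov. The paper's proof is a terse three sentences, whereas you spell out the translation into the language of strata of quadratic differentials and verify that $\mathcal Q(2g-3,2g-1)$ avoids the exceptional list; this added detail is accurate and helpful, but the underlying argument is the same.
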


\begin{proof}
Write $4g=p+q$ where $p,q$ are relatively prime odd numbers. For
example, we can take $p=2g-1$ and $q=2g+1$. By
the work of Masur-Smillie \cite{MS} $F$ as above exists.
\end{proof}



\subsection{Masur domain and Hempel elements}
Suppose $V$ is a handlebody and 
$S$ its boundary.
Let $D \subset \C(S)$ be the set of
curves that bound disks in $V$.
Denote by $L\subset \mathcal{PML}(S)$ the closure of $D$, viewed as a
subset of $\mathcal{PML}(S)$.
Then $L$ is nowhere dense in
$\mathcal{PML}(S)$ \cite{M}, and its complement $\Omega$ is called
the {\it Masur domain}. 

Hempel \cite{H} found that if the stable lamination of a pseudo-Anosov
element $F$ is in $\Omega$ then $\lim_{n\to \infty}d_{\C(S)}(D,
F^n(D))=\infty$.  
We say a pseudo-Anosov element $F:S\to S$ is {\it Hempel} for $D$
if the nearest point projection of $D$ to $axis(F)$ is a bounded
set.

$F$ is Hempel if and only if the end points of $axis(F)$ are in
$\Omega$, \cite{Sch}. On the other hand, both endpoints of $axis(F)$
are in $L$ if and only if $axis(F)$ is contained in a $K$-neighborhood
of $D$ for some $K>0$ since both $D$ and $axis(F)$ are quasi-convex
subsets in the $\delta$-hyperbolic space $\C(S)$ (cf. \cite{Sch}).
Since $L$ is nowhere dense in $\mathcal{PML}(S)$ and the set of pairs
of endpoints $(\lambda^+,\lambda^-)$ of pseudo-Anosov mapping classes
is dense in $\mathcal{PML}(S)\times \mathcal{PML}(S)$, there is a
pseudo-Anosov element $F$ whose stable and unstable laminations are
not in $L$, so that $F$ is Hempel.

Masur found a condition in terms of the intersection number 
for a curve to be in $D$ \cite[Lemma 1.1]{M} and used it 
to prove $L$ is nowhere dense \cite[Theorem 1.2]{M}.
The following lemma is proved using his ideas.

\begin{lemma}\label{hempel}
Let $V' \to V$ be a double cover between handlebodies with the deck
group $\<a\>$, 
$S'=\partial V', S=\partial V$, and $D' \subset \C(S'), D \subset \C(S)$
the set of curves that bound disks in $V', V$, respectively.

If the genus of $S$ is $\geq 3$, then $MCG(S)$ contains a 
pseudo-Anosov element $F$ such that:
\begin{enumerate}[(i)]
\item $F$ is Hempel for $D$,
\item $F$ lifts to $F':S'\to S'$ and 
$F'$ is Hempel for $D'$,
\item
$N(F')=\langle a \rangle$.
\end{enumerate}

\end{lemma}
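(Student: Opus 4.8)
The plan is to take for $F$ a conjugate $\phi F_1\phi^{-1}$ of a pseudo--Anosov $F_1$ produced by Masur--Smillie with the singularity data of Lemma \ref{elementary}. Conjugation preserves the orders of the singular points of the invariant foliations, so property (iii) will follow from Lemma \ref{elementary} applied to $F$ itself; the role of the conjugating element $\phi$ is to drag the invariant laminations of $F_1$, and simultaneously their lifts to $S'$, into the two Masur domains.

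First I would fix, by Corollary \ref{MS.elementary} (applicable since the genus of $S$ is $\geq 3$), a pseudo--Anosov $F_0:S\to S$ whose invariant foliations have exactly two singular points, of orders $p$ and $q$ with $p,q$ odd and relatively prime. The double cover $S'\to S$ is classified by a nonzero class in $H^1(S;\Z/2)$, and the subgroup $\Lambda<MCG(S)$ of mapping classes fixing that class --- equivalently, of those that lift to $S'$ --- has finite index; so a power $F_1:=F_0^k$ lies in $\Lambda$, and since it has the same invariant foliations as $F_0$ it still has the required singularity data. Fix a lift $F_1':S'\to S'$, a pseudo--Anosov with invariant foliations $\pi^{*}\mathcal F^{\pm}$, $\mathcal F^{\pm}$ being those of $F_1$. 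For any $\phi\in\Lambda$ with lift $\phi'\in MCG(S')$ set $F=\phi F_1\phi^{-1}\in\Lambda$ and $F'=\phi' F_1'(\phi')^{-1}$, a lift of $F$ (itself pseudo--Anosov, being conjugate to $F_1'$); then $F$ is pseudo--Anosov with the same singularity data as $F_1$, so by Lemma \ref{elementary} $N(F')=\langle a\rangle$, which is (iii). Writing $\lambda^{\pm}$ for the endpoints of $axis(F_1)$, the endpoints of $axis(F)$ are $\phi(\lambda^{\pm})$ and those of $axis(F')$ are $\phi'(\pi^{*}\lambda^{\pm})$. Since $F$ is Hempel for $D$ exactly when both endpoints of $axis(F)$ lie in $\Omega=\mathcal{PML}(S)\setminus L$, and $F'$ is Hempel for $D'$ exactly when both endpoints of $axis(F')$ lie in $\Omega'=\mathcal{PML}(S')\setminus L'$ (a condition insensitive to the choice of lift, since $a$ extends over $V'$, so $a(L')=L'$), it remains to find $\phi\in\Lambda$ with $\phi(\lambda^{\pm})\in\Omega$ and $\phi'(\pi^{*}\lambda^{\pm})\in\Omega'$.

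To do this I would first check that the open set $U=\{\alpha\in\mathcal{PML}(S):\alpha\in\Omega\text{ and }\pi^{*}\alpha\in\Omega'\}$ is nonempty. This is the place where Masur's ideas enter: membership of a lamination in $\Omega$ is certified by Masur's intersection--number criterion \cite[Lemma 1.1]{M} through explicit lower bounds on intersection numbers with a suitable meridian system of $V$ (and dual curves); such a system pulls back under the double cover to a disjoint system of meridians of $V'$ that contains a cut system, with intersection numbers transforming predictably, so a lamination $\alpha$ certified in $\Omega$ has $\pi^{*}\alpha$ certified in $\Omega'$, hence $\alpha\in U$. Granting $U\neq\emptyset$, choose a pseudo--Anosov $g\in MCG(S)$ whose attracting lamination $\eta^{+}$ lies in $U$ and whose repelling lamination $\eta^{-}$ avoids $\lambda^{+}$ and $\lambda^{-}$ (possible because the set of endpoint pairs of pseudo--Anosovs is dense and $U$ is open and nonempty); after replacing $g$ by a power we may take $g\in\Lambda$, with a lift $g'$ that is pseudo--Anosov on $S'$ with attracting lamination $\pi^{*}\eta^{+}\in\Omega'$. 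Then with $\phi=g^{m}$ for $m$ large, the north--south dynamics of $g$ on $\mathcal{PML}(S)$ pushes both $\lambda^{\pm}$ into $\Omega$ near $\eta^{+}$, and the north--south dynamics of $g'$ on $\mathcal{PML}(S')$ pushes both $\pi^{*}\lambda^{\pm}$ into $\Omega'$ near $\pi^{*}\eta^{+}$; this $\phi$ yields an $F$ with all of (i), (ii), (iii).

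The main obstacle I anticipate is precisely the verification that $U\neq\emptyset$, i.e.\ that Masur's description of the Masur domain is compatible with pulling meridians back along the double cover $V'\to V$ --- one must make sure the pulled--back configuration is rich enough to certify membership in $\Omega'$. Everything else --- passing to powers to lie in the finite--index subgroup $\Lambda$, the bookkeeping with elementary closures via Lemma \ref{elementary}, and the use of north--south dynamics together with density of pseudo--Anosov axes --- is routine.
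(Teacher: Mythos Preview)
Your proposal is correct and follows essentially the same route as the paper: both arguments first use Masur's intersection-number criterion to exhibit a lamination on $S$ lying in $\Omega$ whose lift lies in $\Omega'$ (your nonempty set $U$, the paper's lamination $\Lambda$), then conjugate a Masur--Smillie pseudo-Anosov (your $F_1$, the paper's $H$) by a large power of a pseudo-Anosov whose attracting fixed point is near that good lamination (your $g$, the paper's $G$), and finally invoke Lemma~\ref{elementary} for (iii). Your bookkeeping with the finite-index subgroup $\Lambda$ of liftable classes is slightly more explicit than the paper's, which handles this by passing to powers in passing.
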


\begin{proof}
Let $\Omega$ be the Masur domain for $V$ and $\Omega'$ for $V'$.
We first find a lamination $\Lambda$ on $S$ that is in $\Omega$
such that its lift $\Lambda'$ on $S'$ is also in $\Omega'$.
Choose a pants decomposition of $S$ using
curves in $D$ and a lamination $\Lambda\in\mathcal{PML}(S)$ whose
support intersects each pair of pants in this decomposition in 3 (non-empty)
families of arcs
connecting distinct boundary components (so there are no arcs
connecting a boundary component to itself). In the proof of
\cite[Theorem 1.2]{M} Masur shows that  $\Lambda\in\Omega$
(for example, take the curve $\beta$ in his proof as $\Lambda$). 
This is done by verifying the conditions in Lemma 1.1 for $\beta$
with respect to the pants decomposition in the last two paragraphs
of the proof of Theorem 1.2. Now 
the lift
$\Lambda'$ of $\Lambda$ to $S'$ satisfies the same condition with
respect to the lifted pants decomposition (it lifts since our covering 
is between handlebodies and the boundary curves bound disks), so we have
$\Lambda'\in\Omega'$.

Choose a pseudo-Anosov homeomorphism $G:S\to S$ both of whose fixed
points in $\mathcal{PML}(S)$ are close to $\Lambda$ and in particular
they are in $\Omega$ since $\Omega$ is open.  The lift $G'$ of $G$ (or
its power) to $S'$ similarly has endpoints close to $\Lambda'$ and in
particular in $\Omega'$. It follows that both $G$ and $G'$ are Hempel.

To finish the proof we need to arrange that $G$ has the extra 
property (iii).
Let $H:S\to S$ be
an arbitrary pseudo-Anosov mapping class that satisfies the assumption 
of Lemma \ref{elementary}.  Such $H$ exists 
by Corollary \ref{MS.elementary}. Then
$F=G^nHG^{-n}$ also satisfies the assumptions, and hence also conclusion 
of Lemma \ref{elementary}  for any $n>0$ and has an axis whose
endpoints are close to $\Lambda$ if $n>0$ is sufficiently large.
Therefore $F$ is Hempel, and similarly, the lift $F'$ has an axis
whose endpoints close to $\Lambda'$, therefore $F'$ is Hempel.
\end{proof}

\subsection{Proof of Theorem \ref{main}}
We prove Theorem \ref{main}
by constructing an example. 

\begin{proof}[Proof of Theorem \ref{main}]
Let $H \simeq \Z/2\Z + \Z/2\Z$ with generators $a_1,a_2$,
 and let $V' \to V$ be a normal
cover between handlebodies with the deck group $H$. If $g\geq 2$
is
the genus of $V$, then the genus of $V'$ is $4g-3$.
  Let $D' \subset
\C(S')$ be the set of curves in $S'=\partial V'$ that bound disks in $V'$.
We have two double covers $S' \to S'/a_i$.  Let $D_i \subset
\C(S'/a_i)$ be the set of curves in $S'/a_i$ that bound disks in $V'/a_i$.
Put $S_i=S'/a_i$. The genus of $S_i$ is $2g-1$.
Let $Q$ be a common quasi-convex constant for $D', D_1, D_2$,
and $\delta$ the hyperbolicity constant of $\C(S')$.

Using Lemma  \ref{hempel}, take  a  pseudo-Anosov element $F_i$ on $S_i$
that is Hempel for $D_i$ such that the lift $F_i'$ of $F_i$
to $S'$ is also Hempel for $D'$, and that 
$N(F_i')=\<a_i\>$.
Note that $F_1', F_2'$ are independent 
pseudo-Anosov elements on $S'$ since their elementary closures
are different. 
In particular, the projection of $axis(F_1')$ to $axis(F_2')$ 
is bounded, and vice versa. 

%

 Note that $a_i \in stab(D')$ since $a_i \in H$.
   Set $D_i'=F_i'^N(D')$
for  $N>0$.
Then, $a_i \in stab(D_i')$ since $F_i'$ centralizes $a_i$.

Form the Heegaard splitting $V_+' \cup_{S'} V_-'$ 
such that $D_+=D_1', D_-=D_2' \subset \C(S')$.
The surface $S'$ is fixed but the splitting depends on $N$.
We will argue this is a desired splitting.

Set $\Gamma_i=stab(D_i')<MCG(S')$. In other words,
$\Gamma_1=\Gamma_+, \Gamma_2=\Gamma_-$ in the Heegaard splitting
convention. 
Since $a_i \in \Gamma_i$ and $a_1a_2=a_2a_1$,
$\langle \Gamma_1,\Gamma_2 \rangle$ is not the free product of $\Gamma_1,\Gamma_2$.  

To prove the theorem we are
left to verify $d(D_1',D_2') \to \infty$ as $N \to \infty$ and $\Gamma_1
\cap \Gamma_2=1$ for any large $N>0$.

\begin{figure}
\centerline{\scalebox{0.6}{\input{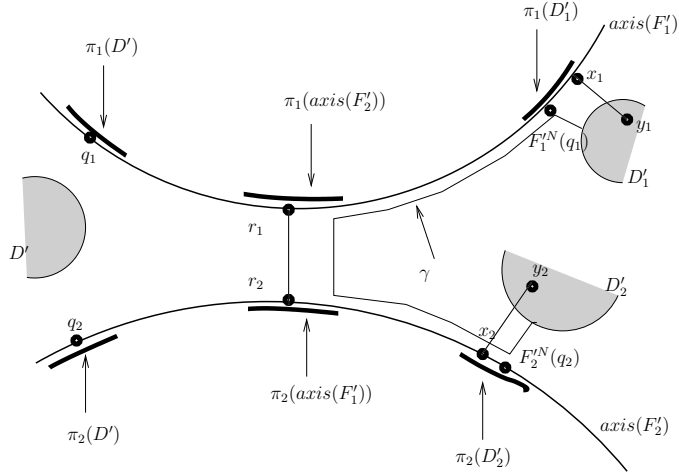}}}
\caption{If $N>0$ is large, the projection $\pi_1(axis(F_2'))$ and $\pi_1(D_1')$
are far apart on $axis(F_1')$, Also, 
$\pi_2(axis(F_1'))$ and $\pi_2(D_2')$
are far apart on $axis(F_2')$. As a consequence 
any shortest geodesic $\gamma$ between $D_1'$ and $D_2'$
must enter a bounded neighborhood of each of those four projection sets,
and the segment in $\gamma$ near $axis(F_i')$ is almost fixed by $a_i$ pointwise.}
\label{fig2}
\end{figure}

 \begin{lemma}\label{apart}
  $d(D_1',D_2') \to \infty$
as $N \to \infty$.
\end{lemma}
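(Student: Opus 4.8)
The plan is to detect the growth of $d(D_1',D_2')$ by projecting everything to $axis(F_1')$ and exploiting that $F_1'$ pushes $D'$ a definite amount at each power. Write $C_i=axis(F_i')$, let $\pi_i\colon\C(S')\to C_i$ denote the coarse nearest point projection, and set $t_i=trans(F_i')>0$; throughout, an additive error written $O(1)$ depends only on $\delta$, $Q$ and the uniform quasigeodesic constants of the axes, and in particular not on $N$.

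First I fix the relevant bounded sets. Since $F_i'$ is Hempel for $D'$, the set $q_i:=\pi_i(D')$ is bounded, and since $F_1',F_2'$ are independent pseudo-Anosov, the sets $r_1:=\pi_1(C_2)$ and $r_2:=\pi_2(C_1)$ are bounded; all four of these are fixed once and for all. Taking $axis(F_i')$ to be genuinely $F_i'$-invariant, we have $\pi_i\circ F_i'^N=F_i'^N\circ\pi_i$ up to an $O(1)$ error that does not accumulate with $N$, so $\pi_i(D_i')=\pi_i\bigl(F_i'^N(D')\bigr)$ lies in the $O(1)$-neighborhood of $F_i'^N(q_i)$. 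By subadditivity of $n\mapsto d(x_0,F_i'^n x_0)$, the set $F_i'^N(q_i)$ is at distance at least $N t_i-O(1)$ from $q_i$, hence from the fixed set $r_i$ as well.

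The key step is to show that the shadow of the moving set $D_2'$ on the fixed axis $C_1$ does not drift: for all sufficiently large $N$, $\pi_1(D_2')$ is contained in the $O(1)$-neighborhood of $r_1$. Fix $z\in D_2'=F_2'^N(D')$. By the previous paragraph applied to $i=2$, $\pi_2(z)$ lies within $O(1)$ of $F_2'^N(q_2)$, so $d\bigl(\pi_2(z),r_2\bigr)\ge N t_2-O(1)$, which exceeds the relevant hyperbolicity/quasiconvexity constant once $N$ is large (a threshold uniform in $z$). Now any geodesic from $z$ to $C_1$ must, before it reaches $C_1$, first travel back along $C_2$ from near $\pi_2(z)$ all the way to near $r_2=\pi_2(C_1)$ --- this is the Behrstock inequality for the quasiconvex pair $C_1,C_2$, or equivalently a ``guess-the-geodesic'' estimate as in the caption of Figure \ref{fig0} --- and it meets $C_1$ within $O(1)$ of $r_1$. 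Hence $\pi_1(z)$ is within $O(1)$ of $r_1$.

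Granting the key step, the lemma follows from the coarse Lipschitz property of nearest point projection, $d\bigl(\pi_1(x),\pi_1(y)\bigr)\le d(x,y)+O(1)$ for $x,y\in\C(S')$. Indeed,
\[
d(D_1',D_2')\ \ge\ d\bigl(\pi_1(D_1'),\pi_1(D_2')\bigr)-O(1)\ \ge\ d\bigl(F_1'^N(q_1),r_1\bigr)-O(1)\ \ge\ N t_1-O(1),
\]
which tends to $\infty$ as $N\to\infty$. (By the symmetric argument $\pi_2(D_1')$ also lies within $O(1)$ of $r_2$; this is not needed here but will later let us locate a shortest geodesic between $D_1'$ and $D_2'$ as in Figure \ref{fig2}.) The one genuinely delicate point is the key step --- controlling the projection of the runaway set $D_2'$ onto $C_1$, i.e. ruling out that $\pi_1(D_2')$ slides off along $C_1$ as $N$ grows. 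Everything else is routine $\delta$-hyperbolic bookkeeping, with the single pitfall that one must use a genuinely invariant axis so that $\pi_i\circ F_i'^N\approx F_i'^N\circ\pi_i$ carries no $N$-dependent error.
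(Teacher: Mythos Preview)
Your argument is correct and uses the same ingredients as the paper: the bounded sets $\pi_i(D')$ (Hempel) and $\pi_i(C_j)$ (independence), equivariance of $\pi_i$ under $F_i'$, and a hyperbolic ``guess-the-geodesic'' estimate. The organization differs slightly. The paper chooses arbitrary $y_i\in D_i'$, sets $x_i=\pi_i(y_i)$, and argues directly that the broken path $[y_1,x_1]\cup[x_1,r_1]\cup[r_1,r_2]\cup[r_2,x_2]\cup[x_2,y_2]$ is a uniform quasigeodesic, giving the sharper bound $d(D_1',D_2')\ge (t_1+t_2)N-A$ and, as an immediate byproduct, that any geodesic from $D_1'$ to $D_2'$ passes within $O(1)$ of each of $F_1'^N(q_1),r_1,r_2,F_2'^N(q_2)$ --- exactly the input needed in the next lemma. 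You instead isolate a Behrstock-type step (if $\pi_2(z)$ is far from $r_2$ then $\pi_1(z)$ is near $r_1$) and then project to the single axis $C_1$, obtaining $d(D_1',D_2')\ge t_1 N-O(1)$. This is a perfectly valid shortcut for the lemma as stated; your parenthetical remark about the symmetric projection recovers what is needed for Figure~\ref{fig2} and Lemma~\ref{trivial}.
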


\begin{proof}
We claim that there is a  constant $A$ such that for any $N>0$,
$$d(D_1',D_2') \ge (trans(F_1') + trans(F_2'))N-A.$$
Let $\pi_1$ denote the projection to $axis(F_1')$, 
and $\pi_2$ the projection to $axis(F_2')$. As we said they are coarse maps
but we pretend they are maps for simplicity. 
Also, we pretend that both $axis(F_1'), axis(F_2')$ are geodesics.

 Let  $L$ be a common bound of the diameter of the sets 
 $\pi_1(D')$, $\pi_1(axis(F_2'))$, $\pi_2(D')$ and $\pi_2(axis(F_1'))$.
Then $L$ is a bound of $\pi_1(D_1')$ and $\pi_2(D_2')$
for all $N>0$.
Choose points $q_1 \in \pi_1(D')$, $q_2 \in \pi_2(D')$, 
$r_1 \in \pi_1(axis(F_2'))$ and $r_2 \in \pi_2(axis(F_1'))$.

Now assume $N>0$ is so large  that $\pi_1(axis(F_2'))$ and $\pi_1(D_1')$
are far apart, and also $\pi_2(axis(F_1'))$ and $\pi_2(D_2')$ are far apart
(compared to $L$ and $\delta$).
It suffices to show the above inequality  under this assumption. 

Let $y_1 \in D_1'$ and $y_2 \in D_2'$ be any points, and 
put $x_1=\pi_1(y_1), x_2=\pi_2(y_2)$.
Then, by a standard argument using $\delta$-hyperbolicity, the piecewise
geodesic $[y_1, x_1]\cup[x_1,r_1]\cup[r_1,r_2]\cup[r_2,x_2]\cup[x_2,y_2]$
is a quasi-geodesic with uniform  quasi-geodesic constants
that depends only on $L$ and $\delta$. See Figure \ref{fig2}.
Hence the Hausdorff distance between the piecewise geodesic
and the geodesic $[y_1,y_2]$ is bounded (the bound depends only on $L$ and $\delta$).

It follows that there is a constant $C$ such that 
$d(D_1', D_2') \ge d(y_1, x_1)+d(x_1,r_1)+d(r_1,r_2)+d(r_2,x_2)+d(x_2,y_2)-C
\ge d(x_1,r_1)+d(r_2,x_2)-C \ge d(F_1'^N(q_1), r_1)-L + d(F_2'^N(q_2), r_2)-L -C$.
On the other hand since $q_1,r_1 \in axis(F_1')$ and 
$q_2, r_2 \in axis(F_2')$ there is a constant $B$ such that for all $N>0$
 we have $d(F_1'^N(q_1), r_1)+ d(F_2'^N(q_2), r_2)
\ge  (trans(F_1') + trans(F_2'))N-B$.
Combining them we get a desired estimate with $A=2L+B+C$.
\end{proof}

We note that any geodesic joining a point in $D_1'$ and a point
in $D_2'$ passes through a bounded neighborhood of each of $F_1'^N(q_1), 
r_1, r_2, F_2'^N(q_2)$ provided that $N>0$ is large enough. 
The bound depends only on $L$ and $\delta$. See Figure \ref{fig2}.
In the argument we did not use that $D'$ (as well as $D_1', D_2'$) are
quasi-convex.

To argue $\Gamma_1 \cap \Gamma_2=1$, we will need the following lemma from
\cite[Proposition 6]{BF1}. This is a consequence of the fact that $F$
is a ``WPD element''.

\begin{lemma}\label{wpd}
Let $F$ be a pseudo-Anosov mapping class on a hyperbolic surface
$S$. There is a constant $M>0$ such that for any $g\in MCG(S)$ the
diameter of the projection of $g(axis(F))$ to $axis(F)$ in $\C(S)$ is
larger than $L$, then $g\in E(F)$.
\end{lemma}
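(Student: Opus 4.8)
The statement is the weak proper discontinuity (WPD) property of the pseudo-Anosov $F$ acting on $\C(S)$, so the plan is to deduce it from the acylindricity of the action of $MCG(S)$ on $\C(S)$ (Bowditch \cite{Bo}), combined with the coarse geometry of the quasi-geodesic $axis(F)$. (Alternatively one simply invokes \cite[Proposition 6]{BF1}; what follows is a sketch of that argument. We read the statement as: there is $M>0$ such that if the diameter of the projection of $g(axis(F))$ to $axis(F)$ exceeds $M$ then $g\in E(F)$.) Throughout, $L(\cdot),N(\cdot)$ denote the acylindricity functions of the action, and we use that $g\cdot axis(F)=axis(gFg^{-1})$ is again a quasi-geodesic with the same quasi-geodesic constants as $axis(F)$ (depending only on $S$), since $trans(gFg^{-1})=trans(F)$; as in the Preliminaries we may and do replace $axis(F)$ by an $F$-orbit $O=\{F^j(x_0)\}$.

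The first step is to convert the hypothesis into a fellow-travelling statement. If the projection of $g(O)$ to $O$ has diameter $>M$, then by the Morse-type stability of quasi-geodesics in the $\delta$-hyperbolic space $\C(S)$, the quasi-geodesic $g(O)$ stays within a uniform distance $C_0=C_0(S)$ of a sub-arc of $O$ of diameter at least $M-C_0$, and along this arc the two orbits fellow-travel either coherently or with opposite orientation. Consider first the coherent case. Both $O$ and $g(O)=\{(gFg^{-1})^jg(x_0)\}$ are orbits of cyclic groups whose generators translate by the \emph{same} length $trans(F)$, so along the common arc the nearest-point matching between the two orbits sends the $j$-th point of one to (roughly) the $j$-th point of the other up to a \emph{global} index shift and a bounded additive error. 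Cancelling that shift, one finds an integer $N$ so that $h:=F^{N}g$ coarsely fixes a long sub-arc $J\subset O$ pointwise: $d(u,h(u))\le R_0$ for every $u\in J$, where $R_0=R_0(S,F)$ and $\mathrm{diam}(J)\ge M-C_1$ for a constant $C_1=C_1(S,F)$.

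Now set $L_0=L(R_0)$ and $N_0=N(R_0)$, and choose $M$ large enough that $M-C_1\ge L_0+N_0\cdot trans(F)+C_2$ for a suitable $C_2=C_2(S,F)$. Then we can pick $v,w\in J$ with $d(v,w)\ge L_0$ and with $F^{k}(v),F^{k}(w)\in J$ for all $0\le k\le N_0$. For each such $k$ the conjugate $h_k:=F^{-k}hF^{k}$ satisfies $d(v,h_k(v))=d(F^{k}(v),h(F^{k}(v)))\le R_0$ and likewise $d(w,h_k(w))\le R_0$. By acylindricity at most $N_0$ elements of $MCG(S)$ move both $v$ and $w$ by at most $R_0$, so among $h_0,\dots,h_{N_0}$ two coincide: $h_i=h_j$ with $i\ne j$, whence $F^{m}h=hF^{m}$ for $m=i-j\ne 0$. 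Thus $h$ centralizes $F^m$, so $h\in E(F^m)=E(F)$, and therefore $g=F^{-N}h\in E(F)$. In the orientation-reversing case one applies the same reasoning to $h^{2}$ (equivalently to an element such as $F^{N}gF^{N}g$): reversing the fellow-travelling arc twice, this element coarsely fixes a long sub-arc of $O$ pointwise, so it commutes with a nontrivial power of $F$ and hence lies in $E(F)$; since $h^2$ is then either pseudo-Anosov with the same axis as $F$ or a nontrivial element of the finite group $N(F)$, one recovers $h\in E(F)$ from the description of centralizers of pseudo-Anosov mapping classes (equivalently: $g$ preserves the unordered pair of endpoints of $axis(F)$ at infinity, hence lies in the elementary closure).

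The main obstacle is the quantitative coarse geometry of the second step: verifying that the nearest-point matching between the long fellow-travelling arcs of $O$ and $g(O)$ is a genuine coarse isometry with multiplicative constant one — so that a single power $F^{N}g$, rather than merely a quasi-isometric adjustment, coarsely fixes a long sub-arc — and this is exactly where the equality $trans(gFg^{-1})=trans(F)$ is indispensable. The orientation-reversing bookkeeping is a further (routine) nuisance. Once these points are settled, the conclusion follows mechanically from the acylindricity of $MCG(S)\curvearrowright\C(S)$ already recorded above.
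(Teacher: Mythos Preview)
The paper does not give a proof of this lemma; it is simply quoted from \cite[Proposition 6]{BF1} as an established fact about WPD elements. Your proposal supplies an actual argument, and it does so by deducing the statement from Bowditch's acylindricity theorem \cite{Bo} (already recorded in the paper) rather than from the original direct WPD proof in \cite{BF1}. The sketch is correct: the key points --- that $g(axis(F))$ and $axis(F)$ are quasi-geodesics with the same constants and the same translation length, so that along a long fellow-travelling arc some $h=F^{N}g$ coarsely fixes the arc pointwise, after which the pigeonhole on the conjugates $F^{-k}hF^{k}$ together with acylindricity forces $h$ to commute with a nontrivial power of $F$ --- are exactly what is needed. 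Your handling of the orientation-reversing case is a bit roundabout; the cleanest conclusion there is the one you mention at the very end, namely that a long projection forces $g$ to coarsely preserve the unordered pair of endpoints of $axis(F)$ in $\partial\C(S)$, which is the defining condition for $E(F)$. In any case this route is perfectly acceptable and arguably more in keeping with the paper's internal logic, since acylindricity is its working hypothesis throughout; historically \cite{BF1} predates \cite{Bo} and established WPD directly, but deriving it from acylindricity as you do is now standard.
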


\begin{lemma}\label{trivial}
$\Gamma_1 \cap \Gamma_2=1$ for any large $N>0$.
\end{lemma}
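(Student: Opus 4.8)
The idea is to show that a nontrivial element $g\in\Gamma_1\cap\Gamma_2$ would have to preserve both $axis(F_1')$ and $axis(F_2')$ coarsely, forcing $g\in E(F_1')\cap E(F_2')$, which is finite, and then to rule out the surviving finite possibilities using the structure of the Masur--Smillie pseudo-Anosovs. First I would set up the geometry: fix $N$ large and let $\gamma$ be a shortest geodesic between $D_1'=F_1'^N(D')$ and $D_2'=F_2'^N(D')$. By the remark following Lemma \ref{apart}, $\gamma$ passes through a bounded neighborhood (bound depending only on $L,\delta$) of each of the four projection points $F_1'^N(q_1),r_1,r_2,F_2'^N(q_2)$; moreover the subsegment of $\gamma$ between (a neighborhood of) $F_1'^N(q_1)$ and $r_1$ fellow-travels $axis(F_1')$ along a segment of length $\geq trans(F_1')N - O(L,\delta)$, and similarly for $axis(F_2')$.

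Next, suppose $1\neq g\in\Gamma_1\cap\Gamma_2$, so $g(D_1')=D_1'$ and $g(D_2')=D_2'$. Then $g(\gamma)$ is again a shortest geodesic between $D_1'$ and $D_2'$, and by Lemma \ref{unique}(1) (applied with $A=D_1',B=D_2'$, both $Q$-quasi-convex) $g$ moves each point of the ``deep middle'' of $\gamma$ a bounded amount, in particular it moves the long $axis(F_1')$-fellow-traveling subsegment a bounded amount. Hence $g(axis(F_1'))$ has a long (length $\gg L$) bounded-Hausdorff-distance overlap with $axis(F_1')$, so the projection of $g(axis(F_1'))$ to $axis(F_1')$ has diameter $>L$, and Lemma \ref{wpd} gives $g\in E(F_1')$. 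By the symmetric argument $g\in E(F_2')$. Since $F_1',F_2'$ are independent, $E(F_1')\cap E(F_2')$ is a finite group; in fact an element of $E(F_1')$ acting on the infinite-cyclic quotient $E(F_1')/N(F_1')$ either lies in $N(F_1')$ or is loxodromic on $axis(F_1')$, and a loxodromic-on-$axis(F_1')$ element cannot lie in $E(F_2')$ (it would move $axis(F_2')$ unboundedly), so $g\in N(F_1')\cap N(F_2')=\<a_1\>\cap\<a_2\>$, which is trivial inside $H\cong\Z/2\Z+\Z/2\Z$. This contradicts $g\neq1$.

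The step I expect to be the main obstacle is making the ``$g$ moves a long subsegment of $axis(F_i')$ a bounded amount $\Rightarrow$ $g\in E(F_i')$'' implication clean: one must check that $g(\gamma)$ really is a shortest $D_1'$--$D_2'$ geodesic (immediate since $g$ stabilizes both sets), that Lemma \ref{unique}(1) applies on the relevant subsegment (the fellow-traveling part of $\gamma$ is at distance $>Q+2\delta$ from both $D_1'$ and $D_2'$ once $N$ is large, since that part is $\Omega(N)$ away from the endpoints), and that a bounded pointwise displacement of a long segment of $axis(F_i')$ yields a projection of $g(axis(F_i'))$ to $axis(F_i')$ of diameter $>L$ — this is where the choice of $N$ depending on the constant $M$ of Lemma \ref{wpd} (and on $L,\delta$) enters. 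A secondary point is the finiteness argument for $E(F_1')\cap E(F_2')$: the cleanest route is to observe that any $g$ in this intersection coarsely fixes both axes, hence acts with bounded orbits on each, hence has finite order (using acylindricity of the $MCG(S')$-action on $\C(S')$, or directly that an infinite-order element of $E(F_i')$ is loxodromic on $axis(F_i')$), placing $g\in N(F_1')\cap N(F_2')=\<a_1\>\cap\<a_2\>=1$.
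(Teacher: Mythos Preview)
Your proposal is correct and follows essentially the same route as the paper: use Lemma~\ref{unique} to see that any $g\in\Gamma_1\cap\Gamma_2$ moves the shortest geodesic $\gamma$ only a bounded amount, deduce via Lemma~\ref{wpd} that $g\in E(F_1')\cap E(F_2')$, and finish with $N(F_1')\cap N(F_2')=\langle a_1\rangle\cap\langle a_2\rangle=1$. Two cosmetic remarks: the displacement bound you want is exactly the content of Lemma~\ref{unique}(2) rather than~(1), and the paper phrases the WPD step slightly more economically by just noting that the two specific points $F_1'^{\,N}(q_1),r_1\in axis(F_1')$ are far apart and each moved a bounded amount, which already forces the projection of $g(axis(F_1'))$ to $axis(F_1')$ to be long---no need to track an entire fellow-traveling subsegment.
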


\begin{proof}
By Lemma \ref{apart}  choose $N$  large such  that $d(D_1',D_2')$
is very large compared to $\delta$ and $L$.
Let $\gamma$ be a shortest geodesic from  $D_1'$ to $D_2'$.
Then as we noted after the proof of Lemma \ref{apart},
$\gamma$ passes through the bounded neighborhood of each of $F_1'^N(q_1),
r_1, r_2, F_2'^N(q_2)$.

Now let $f \in \Gamma_1 \cap \Gamma_2$.  Then we have $d(x,f(x)) \le 2Q+8
\delta$ for any $x \in \gamma$ by Lemma \ref{unique}.  Since all of
$F_1'^N(q_1), r_1, r_2, F_2'^N(q_2)$ are in bounded distance from
$\gamma$ we conclude each of those four points is moved by $f$ a
bounded amount (the bound depends only on $\delta, L, Q$).

But since $F_1'^N(q_1),r_1$ are contained in $axis(F_1')$ and are far apart
from each other 
for any large $N>0$, we find $f(axis(F_1'))$ has a long ($>M$)
projection to $axis(F_1')$, hence $f \in E(F_1')$ 
by Lemma \ref{wpd}. By the same reason
$f \in E(F_2')$.
We conclude $f \in E(F_1') \cap E(F_2')$.
But since $F_1'$ and $F_2'$ are independent,  $f$ must be a torsion element, 
so $f \in N(F_1') \cap N(F_2')$.
By Lemma \ref{elementary},
$f \in \langle a_i \rangle \cap \langle a_2 \rangle =1$.
We showed the lemma. 
\end{proof}

 We proved the theorem. 
\end{proof}


\section{Prime pseudo-Anosov elements}\label{prime.section}
 In view of  Lemma \ref{elementary} we introduce a property
that looks interesting for its own sake.
We say a pseudo-Anosov mapping class $F$ is {\it prime} if its stable/unstable foliations are not
lifts of any foliations of a (possibly orbifold) quotient of $S$.

If $F$ is prime then $E(F)$ is cyclic and $N(F)=1$. Indeed, if
$N(F)\neq 1$ then the two foliations lift from $S/N(F)$, with $N(F)$
realized as a group of isometries of $S$ using Nielsen realization.
Moreover, 

\begin{lemma}\label{elementary2} (cf. Lemma \ref{elementary})
Suppose $S' \to S$ is a finite cover with the 
Deck group $\Delta$.
Let $F$ be a prime pseudo-Anosov element on $S$ and $F'$
a lift of a power of $F$ to $S'$.
Then $N(F')=\Delta$.
\end{lemma}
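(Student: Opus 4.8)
The plan is to mimic the ``easy'' direction of Lemma \ref{elementary}, but now using primeness in place of the explicit singularity-order argument. First I would note the obvious inclusion $\Delta < N(F')$: by Nielsen realization we may pick a hyperbolic structure on $S$ in which $F$ is affine (i.e.\ a flat structure with cone singularities coming from the stable/unstable foliations of $F$), and since the stable/unstable foliations of $F'$ are pulled back from those of $F$, the whole deck group $\Delta$ preserves them, hence $\Delta \subset E(F')$; being finite, $\Delta \subset N(F')$.

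For the reverse inclusion $N(F') \subset \Delta$, take $g \in N(F')$. Then $g$ is a finite-order homeomorphism of $S'$ preserving the (lifted) stable and unstable foliations $\mathcal{F}^{\pm}$ of $F'$. The key point is that these lifted foliations are $\Delta$-invariant, so $\langle g, \Delta\rangle$ acts on $S'$ (after passing to a flat structure realizing $\mathcal{F}^{\pm}$, so that all these maps are isometries; here one uses that a finite group of homeomorphisms preserving a pair of transverse measured foliations can be realized isometrically) preserving $\mathcal{F}^{\pm}$. The quotient orbifold $S'/\langle g,\Delta\rangle$ then carries a pair of measured foliations whose pullback to $S'$ is $\mathcal{F}^{\pm}$, hence whose further pullback structure on $S$ realizes the stable/unstable foliations of $F$ as lifts from the orbifold $S'/\langle g,\Delta\rangle$ via the composite cover $S \to S'/\langle g,\Delta\rangle$ --- wait, more precisely: $S' \to S'/\langle g,\Delta\rangle$ factors the cover $S' \to S$ only if $\Delta$ is normal in $\langle g,\Delta\rangle$. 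Rather than worry about normality I would argue directly: the subgroup $\langle g\rangle$ descends through the cover $S' \to S$ to a finite group of isometries of $S$ if and only if $g$ normalizes $\Delta$, which holds because $g \in N(F')$ is contained in $E(F')$ and $\Delta \lhd E(F')$ (since $\Delta$ is, by the first paragraph, contained in the \emph{unique} finite normal subgroup $N(F')$, hence normalized by everything in $E(F')$). So $g$ induces a finite-order homeomorphism $\bar g$ of the quotient orbifold $S/\bar\Delta$ where $\bar\Delta = \langle g,\Delta\rangle/\Delta$, and the stable/unstable foliations of $F$ are lifts of the foliations on $S/\langle g,\Delta\rangle$. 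By primeness of $F$, this forces $\langle g,\Delta\rangle/\Delta$ to be trivial, i.e.\ $g \in \Delta$.

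Assembling: $\Delta \subset N(F') \subset \Delta$, so $N(F') = \Delta$.

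The main obstacle I anticipate is the foliation/orbifold bookkeeping: making sure that ``the stable and unstable foliations of $F$ are lifts of foliations on the orbifold quotient'' is literally the negation of primeness as defined (in particular that measured foliations descend correctly through orbifold covers, and that the realization of the finite group $\langle g, \Delta\rangle$ by isometries of a single flat structure is legitimate --- this is where I would lean on Nielsen realization and the fact that the flat structure associated to a measured foliation pair is canonical). Once primeness is unwound to exactly this statement, the argument is short; the work is entirely in setting up the right commutative diagram of (orbifold) covers and checking $g$ normalizes $\Delta$, which follows cleanly from $\Delta \subset N(F')$ being characteristic in $E(F')$.
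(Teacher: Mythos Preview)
Your overall strategy is right and very close to the paper's, but there is a genuine gap in the normality step. You claim that $g$ normalizes $\Delta$ ``because $\Delta \subset N(F')$, hence normalized by everything in $E(F')$''. This does not follow: $N(F')$ is normal in $E(F')$, but an arbitrary subgroup $\Delta$ of $N(F')$ need not be normal in $N(F')$, let alone in $E(F')$. (Your final phrase ``$\Delta \subset N(F')$ being characteristic in $E(F')$'' has the same problem.) So the argument that $g$ descends to an automorphism of $S$ is not justified.

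The fix is exactly what the paper does, and it sidesteps the normality issue entirely: instead of working with $\langle g,\Delta\rangle$, quotient $S'$ by the whole group $N(F')$. Since $\Delta < N(F')$, the quotient map $S' \to S'/N(F')$ is constant on $\Delta$-orbits and therefore factors as $S' \to S'/\Delta = S \to S'/N(F')$; the second arrow is an orbifold cover of degree $[N(F'):\Delta]$. The stable and unstable foliations of $F'$ are $N(F')$-invariant, so they descend to $S'/N(F')$, and hence the foliations of $F$ on $S$ are pulled back from $S'/N(F')$. If $\Delta \subsetneq N(F')$ this contradicts primeness. No normality of $\Delta$ in anything is needed. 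Once you make this replacement, your argument and the paper's coincide.
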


\begin{proof}
It is clear that $\Delta <N(F')$.
If the inclusion is strict, then 
the stable and unstable foliations of $F$ can be obtained by pulling
back from $S'/N(F')=S/(N(F')/\Delta)$.
So we have a contradiction. 
\end{proof}

Note that if we have a prime pseudo-Anosov 
element on $S$, we can use  Lemma \ref{elementary2}
instead of Lemma 5.2 in the proof of Lemma \ref{hempel} and Theorem \ref{main}.
We will give a construction of prime pseudo-Anosov elements
when the genus of $S$ is 3, so this will also prove the theorem 
for the genus 5 case.

Recall that if $a,b,c,d$ are 4 vectors in $\R^2$ then the cross ratio
is $$[a,b;c,d]=\frac{[a,c][b,d]}{[a,d][b,c]}$$ where
$[x,y]=x_1y_2-x_2y_1$ for $x=(x_1,x_2), y=(y_1,y_2)$.  We do not
define it when one of $[a,c], [b,d], [a,d], [b,c]$ is $0$.  The cross
ratio is invariant under changing signs and scaling individual vectors
and applying matrices in $SL_2(\R)$.  It follows that for any flat
structure on the torus the cross ratio for the vectors in the
directions of four distinct closed geodesics is (well-defined and)
rational.

A singular Euclidean structure (or just a {\it flat} structure) on a
surface $S$ is {\it good} if the cone angle is a multiple of $\pi$ at
each singularity. A geodesic segment connecting two singular points,
or a closed geodesic is {\it good} if the angle along the geodesic at
each singular point is a multiple of $\pi$.

The developing map $\widetilde{S-\Sigma}\to\R^2$ defined on the
universal cover of the complement of the cone points will take a good
geodesic to a straight line, or a line segment. 
So, for any four good 
geodesics, the cross ratio for the four directions, if they 
are distinct,  is well-defined.

Next, if $S'\to S$ is a branched cover between good flat
structures, then the cross ratio of four good geodesics in $S'$ is
equal to the cross ratio of their images in $S$, simply because $S,S'$
have the ``same'' developing map. In particular, all cross ratios
between good geodesics on a torus or a sphere with 4 cone points are
rational, and to prove that a particular good flat
surface is not commensurable with a torus it suffices to produce four
good geodesics whose cross ratio is irrational.

\begin{lemma}\label{prime}
Suppose $F:S\to S$ is a pseudo-Anosov homeomorphism such that:
\begin{enumerate}[(1)]
\item the stable foliation of $F$ has two singular points $x$ and $y$, 
with $p$
  and $q$ prongs respectively, and with $p$ and $q$ distinct odd primes, and

  \item
  a flat structure on $S$ determined by the stable and
  unstable foliations
  has four 
  good closed geodesics with the cross ratio of
  their (distinct) direction vectors 
  $a,b,c,d \in {\mathbb R}^2$ irrational. 
  
\end{enumerate}
Then $F$ is prime.
\end{lemma}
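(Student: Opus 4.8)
The plan is to argue by contradiction: suppose $F$ is not prime, so that its stable and unstable foliations are pulled back from foliations on a (possibly orbifold) quotient $S/\Delta$, where we realize $\Delta=N(F)$ as a group of isometries of the flat structure coming from the two measured foliations. Condition (1) is what pins down the structure of $\Delta$. Since $\Delta$ permutes the singular points of the foliation, and the only singularities are $x$ (a $p$-prong) and $y$ (a $q$-prong) with $p\neq q$, every element of $\Delta$ must fix both $x$ and $y$. An isometry of a flat cone surface fixing a cone point acts near it by rotation through a multiple of the cone angle; fixing a second cone point as well (together with the behavior of the foliation, which it preserves) forces a finite order element whose order divides quantities built from $p$ and $q$ in the manner of Lemma~\ref{elementary} — in particular, using $p,q$ distinct primes, one deduces $\Delta$ is trivial, i.e. $N(F)=1$ and there is no nontrivial quotient coming from an honest free action. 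This handles the ``manifold'' quotients.

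The remaining, and harder, case is an \emph{orbifold} quotient: the foliations could be pulled back under a branched cover $S\to O$ where $O$ is a lower-complexity orbifold, even though $N(F)=1$. This is where condition (2) enters. The key point, set up in the paragraphs preceding the lemma, is that the flat structure determined by the stable/unstable foliations of $F$ is \emph{good} (cone angles multiples of $\pi$, since $p,q$ are odd the prong counts give cone angles $p\pi$ and $q\pi$), and that cross ratios of direction vectors of good closed geodesics are preserved by branched covers between good flat structures. So if $S$ branch-covered a torus or a sphere-with-four-cone-points (the only flat orbifolds of smaller complexity that could support the pulled-back foliations — a pseudo-Anosov on a quotient still being pseudo-Anosov constrains the Euler characteristic and the cone data severely), then all cross ratios of good closed geodesics on $S$ would be rational, contradicting (2). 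Thus no orbifold quotient exists either, and $F$ is prime.

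The step I expect to be the main obstacle is the bookkeeping of which flat orbifolds $O$ can actually occur as the quotient. One must argue that a pseudo-Anosov $F$ descending (up to power) to $O$, with the singularity data forced by (1), leaves only the torus and the $4$-cone-point sphere as possibilities with \emph{rational} cross ratios — i.e. one needs the statement ``all cross ratios of good closed geodesic directions on a torus or on $S^2(2,2,2,2)$ are rational,'' which follows from the $SL_2(\mathbb{R})$-invariance of the cross ratio and the fact that the holonomy of these structures lies (after normalization) in $SL_2(\mathbb{Z})$ up to commensurability, together with ruling out quotients whose underlying surface has higher genus by noting the foliations would then have singularities incompatible with pulling back to exactly an order-$p$ and an order-$q$ singularity with $p,q$ prime. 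The rest — verifying goodness of the flat structure from the odd prong counts, and invoking the branched-cover invariance of cross ratios already established in the text — is routine.
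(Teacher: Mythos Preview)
Your overall architecture is right — condition (1) constrains the combinatorics of a putative branched cover $\pi:S\to S'$, and condition (2) delivers the contradiction when the quotient is (commensurable to) a flat torus. But the ``bookkeeping'' you defer is in fact the substance of the proof, and your guess at its outcome is wrong.

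The paper does not split into manifold versus orbifold quotients; it directly takes any branched cover $\pi:S\to S'$ of degree $d>1$ with $\mathcal F=\pi^{-1}\mathcal F'$ and first argues that the local degree at $x$ must be $p$ (if it were $1$, some other preimage of $\pi(x)$ would need $kp$ prongs, and there are none), and likewise $q$ at $y$. Then it splits on whether $\pi(x)\neq\pi(y)$ or $\pi(x)=\pi(y)$ and does an Euler characteristic count in each case. The outcome is \emph{not} only the torus and $S^2(2,2,2,2)$: in both cases the twice-punctured $\mathbb{RP}^2$ appears as a possible quotient, and in the second case the $4$-punctured $S^2$ appears as well. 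You have no mechanism to detect or exclude the non-orientable quotient, and the cross ratio argument does not apply there. The paper rules out $\mathbb{RP}^2$ minus two points by a separate argument: its curve complex is finite so it supports no pseudo-Anosov, yet by passing to a regular cover and back down a power of $F$ would have to descend to it — contradiction.

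So the genuine gap is twofold: you have not carried out the local-degree and Euler-characteristic analysis that actually classifies the possible $S'$, and your claimed list omits the $\mathbb{RP}^2$ case, which requires its own (non-cross-ratio) argument. Your first paragraph on $N(F)=1$ is correct but redundant once you handle arbitrary branched covers.
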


We note that there is a 2-parameter family of flat structures
determined by the two foliations; they depend on the choice of the
transverse measure in a projective class on each foliation. However,
since scaling and linear transformations do not change the cross
ratio, the assumption is independent of these choices.

\begin{proof} 
Let $p,q,F$ be as in the statement.
Now suppose $\pi:S\to S'$ is a branched cover of
degree $d>1$ and $\mathcal F=\pi^{-1}\mathcal F'$. The local degree of
$\pi$ at $x$ is either 1 or $p$. It cannot be 1,
since at any other preimage of $\pi(x)$ the singularity would have to
have $kp$ prongs, and there aren't any. Thus at $x$ the map is modeled
on $z\mapsto z^p$, and similarly at $y$ it looks
like $z\mapsto z^q$. There are now two cases.

\noindent
{\it Case 1.} $\pi(x)\neq \pi(y)$.

It follows that the other points that map to
$\pi(x)$ have 2 prongs and so the map there has local degree 2. Thus $d$
is odd and away from the images of singular points the foliation
$\mathcal F'$ is regular (since otherwise $d$ would have to be
even). Thus there are $(d-p)/2$ other preimages of $\pi(x)$, and
deleting these and the same for the $q$-prong singularity we get that the
Euler characteristic of $S-\pi^{-1}(\{\pi(x),\pi(y)\})$
is $$(2-2g)-(d-p)/2-(d-q)/2-2=-d$$ So the Euler characteristic of the
quotient $S'$ minus 2 singular points is $-1$, i.e. the quotient
is the twice punctured $\R P^2$, which does not support any
pseudo-Anosov homeomorphisms (e.g. the curve complex is finite, see
\cite{scharlemann}). On the other hand, let $\tilde S$ be a finite
cover of the punctured $S$ so that the induced cover to $\R
P^2$ minus two points is regular. Some power of $F$ lifts to $\tilde
F$ on $\tilde S$, and since the cover is regular, a further power
of $\tilde F$
descends to $\R P^2$ (since each element, $a$, of the Deck group
leaves the stable and unstable foliations invariant, so that $(a \tilde F a^{-1})^N=\tilde F ^N$
for some $N>0$, so $a$ and $\tilde F^N$ commute), contradiction.

\noindent
{\it Case 2.} $\pi(x)=\pi(y)$.

Again the other points that map to $\pi(x)=\pi(y)$ are regular and the
map has local degree 2, so there are $\frac {d-(p+q)}2$ such
points. Here $d$ is even and we may have some number, say $k\geq 0$,
of 1-prong singularities $z_1,\cdots,z_k$ in the quotient, with each
singularity having $\frac d2$ preimages where local degree is 2. Now
we have that the Euler characteristic of
$S-\pi^{-1}(\{\pi(x),z_1,\cdots,z_k\})$ is
$$(2-2g)-\frac{d-(p+q)}2-2-\frac{kd}2=-\frac {(k+1)d}2$$ So $k$ is odd
and the Euler characteristic of the quotient $S'$  minus the singular points
is $-\frac{k+1}2$. So, the Euler characteristic of $S'$
is $-\frac{k+1}2 + (k+1)=\frac{k+1}2$.
The only possibilities are $k=1$ and $k=3$, and the
quotient is twice punctured $\R P^2$ or 4 times punctured $S^2$. The
first possibility is ruled out as in Case 1.

 In the second case the good flat structure on $S$ descends to a good flat 
 structure on $S^2$ with 4 singular points, 
 then lifts to a flat structure 
 on the branch double cover $T^2$, with four closed 
 geodesics such that the cross ratio 
 of the four direction vectors is $[a,b;c,d]$ that is not rational, 
 contradiction. 

Indeed, using the same notation as in Case 1, 
a power of $F$ lifts to $\tilde F$ on $\tilde S$ that regularly covers $S^2$
minus 4 points. We lift the flat structure on $S$
and the stable and unstable foliations of $F$
to $\tilde S$. Then their regular leaves are straight lines.
Each deck transformation preserves the foliations, 
so that it is an isometry of $\tilde S$, and that 
the good flat structure on $\tilde S$, with cone 
angle at each singular point at least $2\pi$, descends
to a good flat structure of $S^2$ minus 4 points (and the angle at each puncture is $\pi$). We obtain a good flat structure
on $S^2$ with four good closed geodesics and 
the cross ratio is $[a,b;c,d]$.
Also, the cross ratio will not change when we take a double 
cover that is a flat torus, contradiction. 
\end{proof}

\begin{remark}\label{MS.foliation}
Regarding  the assumption (1), 
if $g$ is the genus of $S$, 
by an Euler characteristic count we must have $p+q=4g$. Conversely,
the Goldbach conjecture predicts that every even integer $>2$ can be
written as a sum of two primes. When the integer is $\geq 8$ and
divisible by 4, the two primes are necessarily distinct and odd. 
For
example, $12=5+7$ satisfies the Goldbach conjecture.
The
work of Masur-Smillie \cite{MS} shows that if $g\geq 3$ and $4g=p+q$
then the surface $S$ admits a pseudo-Anosov homeomorphism
whose stable and unstable foliations have two singular points, 
one of order $p$, the other of
order $q$.
\end{remark}

\begin{example}\label{flat.example}
We now construct an explicit example in genus 3 satisfying the 
assumption of Lemma \ref{prime}.
We take $p=5$, $q=7$. Consider the flat square tiled
surface $S$ pictured below. Edges labeled by the same letter are to be
identified. If the edges are on opposite sides of the parallelogram
they are identified by a translation, and otherwise by a rotation by
$\pi$. The square tiling of $\R^2$ induces one on the surface
$S$. There are two cone points, with cone angles $5\pi$ and $7\pi$
respectively.
So, $S$ has a good flat structure. 

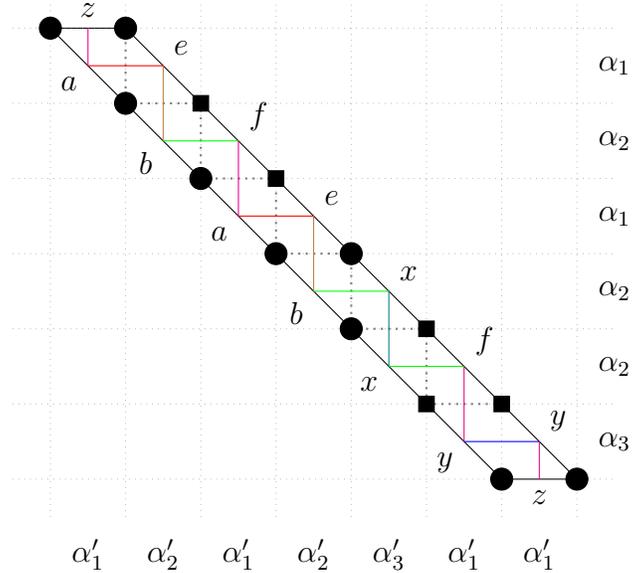
\begin{figure} [ht]
\begin{center}
\begin{tikzpicture}

\draw[black] (0,6) -- (6,0);
\draw[black] (1,6) -- (7,0);
\draw[black] (0,6) -- (1,6);
\draw[black] (6,0) -- (7,0);

\foreach \x in {1,...,5}
{
\draw[gray,thick,dotted] ($ (\x,6-\x) $) -- ($ (\x,7-\x) $);
\draw[gray,thick,dotted] ($ (6-\x,\x) $) -- ($ (7-\x,\x) $);
}

\foreach \x in {0,...,7}
{
\draw[gray,very thin,dotted] ($ (\x,-0.5) $) -- ($ (\x,6.5) $);

}
\foreach \x in {0,...,6}
{
\draw[gray,very thin,dotted] ($ (-0.5,\x) $) -- ($ (7.5,\x) $);
}

\node[scale=0.75] at (0,6) [circle,draw=black,fill=black]{};
\node[scale=0.75] at (1,6) [circle,draw=black,fill=black]{};
\node[scale=0.75] at (1,5) [circle,draw=black,fill=black]{};
\node[scale=0.75] at (2,4) [circle,draw=black,fill=black]{};
\node[scale=0.75] at (3,3) [circle,draw=black,fill=black]{};
\node[scale=0.75] at (4,2) [circle,draw=black,fill=black]{};
\node[scale=0.75] at (6,0) [circle,draw=black,fill=black]{};
\node[scale=0.75] at (7,0) [circle,draw=black,fill=black]{};
\node[scale=0.75] at (4,3) [circle,draw=black,fill=black]{};

\node[scale=0.75] at (5,1) [rectangle,draw=black,fill=black]{};
\node[scale=0.75] at (6,1) [rectangle,draw=black,fill=black]{};
\node[scale=0.75] at (5,2) [rectangle,draw=black,fill=black]{};
\node[scale=0.75] at (3,4) [rectangle,draw=black,fill=black]{};
\node[scale=0.75] at (2,5) [rectangle,draw=black,fill=black]{};

\draw[red] (0.5,5.5) -- (1.5,5.5);
\draw[red] (2.5,3.5) -- (3.5,3.5);

\draw[green] (1.5,4.5) -- (2.5,4.5);
\draw[green] (3.5,2.5) -- (4.5,2.5);
\draw[green] (4.5,1.5) -- (5.5,1.5);

\draw[blue] (5.5,0.5) -- (6.5,0.5);

\draw[magenta] (0.5,5.5) -- (0.5,6);
\draw[magenta] (2.5,3.5) -- (2.5,4.5);
\draw[magenta] (5.5,0.5) -- (5.5,1.5);
\draw[magenta] (6.5,0) -- (6.5,0.5);

\draw[brown] (1.5,4.5) -- (1.5,5.5);
\draw[brown] (3.5,2.5) -- (3.5,3.5);

\draw[teal] (4.5,1.5) -- (4.5,2.5);

\node[below left] at (0.5,5.5) {$a$};
\node[below left] at (1.5,4.5) {$b$};
\node[below left] at (2.5,3.5) {$a$};
\node[below left] at (3.5,2.5) {$b$};
\node[below left] at (4.5,1.5) {$x$};
\node[below left] at (5.5,0.5) {$y$};

\node[above right] at (1.5,5.5) {$e$};
\node[above right] at (2.5,4.5) {$f$};
\node[above right] at (3.5,3.5) {$e$};
\node[above right] at (4.5,2.5) {$x$};
\node[above right] at (5.5,1.5) {$f$};
\node[above right] at (6.5,0.5) {$y$};

\node[above] at (0.5,6) {$z$};
\node[below] at (6.5,0) {$z$};

\node at (7.5,5.5) {$\alpha_1$};
\node at (7.5,4.5) {$\alpha_2$};
\node at (7.5,3.5) {$\alpha_1$};
\node at (7.5,2.5) {$\alpha_2$};
\node at (7.5,1.5) {$\alpha_2$};
\node at (7.5,0.5) {$\alpha_3$};

\node at (0.5,-1) {$\alpha_1'$};
\node at (1.5,-1) {$\alpha_2'$};
\node at (2.5,-1) {$\alpha_1'$};
\node at (3.5,-1) {$\alpha_2'$};
\node at (4.5,-1) {$\alpha_3'$};
\node at (5.5,-1) {$\alpha_1'$};
\node at (6.5,-1) {$\alpha_1'$};

\end{tikzpicture}

 \caption{The square tiled surface. The round vertex has 7 prongs and
   the square vertex has 5. The surface has a good flat structure.}  \label{Fig:Graph}
\end{center}
 \end{figure}

We will use Thurston's construction of pseudo-Anosov homeomorphisms
\cite[Theorem 14.1]{FM} to construct $F$. The lines bisecting the squares
form three horizontal and three vertical geodesics.
The matrix $N$ of intersection numbers, where the $jk$ entry is
the intersection number $i(\alpha_j,\alpha_k')$, is
\[
N=\left(\begin{matrix}
1&1&0\\
1&1&1\\
1&0&0\end{matrix}\right)
\]

The largest eigenvalue of $NN^t$ is $\mu=5.0489\dots$ satisfying the
minimal polynomial $\mu^3-6\mu^2+5\mu-1=0$. Then one can
choose the lengths and heights of the squares (making them into
rectangles, which gives a tiling of $S$) so that the twist in the horizontal multicurve is given by
the matrix 
\[
\left(\begin{matrix}
1&\mu^{1/2}\\
0&1\end{matrix}\right)
\]
and the twist in the vertical multicurve by the matrix
\[
\left(\begin{matrix}
1&0\\
-\mu^{1/2}&1\end{matrix}\right)
\]

This means
that the product of the first and the inverse of the second is
\[
\left(\begin{matrix}
1&\mu^{1/2}\\
0&1\end{matrix}\right)
\left(\begin{matrix}
1&0\\
\mu^{1/2}&1\end{matrix}\right)=
\left(\begin{matrix}
1+\mu&\mu^{1/2}\\
\mu^{1/2}&1\end{matrix}\right)
=A
\]
whose trace is $2+\mu$. So this product is pseudo-Anosov and its
dilatation is the larger, $\lambda$, of the eigenvalues of $A$ and 
the eigenvector is $^t(1, \sigma)$ with $\mu^{1/2}=(1-\sigma^2)/\sigma$
and $\lambda=1/\sigma^2$.

The heights and widths of the rectangles are coordinates of the 
$\mu$-eigenvectors $V$ of $NN^t$ and $V'$ of $N^tN$. We compute
\[
V=\left(
\begin{matrix}
1\\
\mu^2-5\mu+1\\
-2\mu^2+11\mu-4
\end{matrix}
\right)
=
\left(
\begin{matrix}
v_1\\
v_2\\
v_3
\end{matrix}
\right)
\]
and 
\[
V'=\mu^{-1/2}N^tV=
\mu^{-1/2}
\left(
\begin{matrix}
-\mu^2+6\mu-2\\
\mu^2-5\mu+2\\
\mu^2-5\mu+1
\end{matrix}
\right)
=
\left(
\begin{matrix}
v_1'\\
v_2'\\
v_3'
\end{matrix}
\right)
\]

To show that $F$ is prime 
it suffices to find 4 closed geodesics whose slopes have
irrational cross ratio.
We take $a=(1,0)$, $b=(0,1)$, $c=(-v_2',v_1+v_2)$,
$d=(-v_1'-v_2',2v_1+v_2)$, where $c$ and $d$ connect second,
respectively third, vertex on the lower left side in the figure with
the upper right vertex.
They are good closed geodesics on $S$ based at the round vertex
(to compute the angle at the round vertex, it helps first to 
identify the two edges labeled by $z$).
  The cross ratio is 
   \begin{align*}
  \frac{[a,c][b,d]}{[a,d][b,c]} &=\frac{(v_1+v_2)(v_1'+v_2')}{(2v_1+v_2)v_2'}
  =\frac{(\mu^2-5\mu+2)\mu}{(\mu^2-5\mu+3)(\mu^2-5\mu+2)}
  =\frac{\mu}{\mu^2-5\mu+3}\\
  &=\frac{1}{3\mu^2-17\mu+10},
\end{align*}
which is irrational (for the last equality use $\mu^3-6\mu^2+5\mu-1=0$).

\end{example}


\section{Invariable generation}
In this section we discuss another application of 
Theorem \ref{free}.
For this we need the version stated in
Theorem \ref{remark.torsion}.

\subsection{Definitions and results}
Following Dixon \cite{Di} a group $G$ is {\it invariably generated} by a subset
$S$ of $G$ if $G=\langle s^{g(s)}| s\in S\rangle$ for any choice
of $g(s) \in G, s \in S$.
The group $G$ is {\it IG} if it is invariably generated by some subset $S$ in $G$,
or equivalently, if $G$ is invariably generated by $G$. $G$ is {\it FIG}
if is is invariably generated by some finite subset of $G$.
Kantor-Lubotzky-Shalev \cite{KLS} prove
that a linear group is FIG if and only if it is 
finitely generated and virtually solvable. 

Gelander proves that every non-elementary hyperbolic group
is not IG \cite{Ge}.
We generalize this result to acylindrically hyperbolic groups.
A group $G$ is {\it acylindrically hyperbolic} if it admits an acylindrical
action on a hyperbolic space and $G$ is not virtually cyclic \cite{Osin}.
Examples are non-elementary hyperbolic groups, $MCG(S_{g,p}) $
except for the genus $g=0$ and the number of punctures $p \le 3$, 
and $Out(F_n)$ with $n\ge 2$ (cf. \cite{Osin}).

We prove
\begin{thm}\label{notIG}
If $G$ is an acylindrically hyperbolic group, then 
$G$ is not IG.
\end{thm}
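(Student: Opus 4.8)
The plan is to mimic Gelander's argument for hyperbolic groups, replacing the boundary/axis machinery with the WPD-type geometry already developed in this paper. Since $G$ is acylindrically hyperbolic, fix an acylindrical action of $G$ on a $\delta$-hyperbolic space $X$ which is not virtually cyclic. By Osin's theory (or directly, as $G$ is not virtually cyclic) $G$ contains at least two independent loxodromic WPD elements; in fact one can find infinitely many pairwise independent loxodromics. The key point is to show that for \emph{any} candidate invariable generating set $S$, one can choose conjugators $g(s)$ so that the conjugated elements $s^{g(s)}$ all nearly fix a common quasi-convex set, hence generate a proper subgroup. I would split into two cases according to whether $S$ contains an element acting loxodromically on $X$.

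First, suppose some $s_0\in S$ acts as a loxodromic (WPD) element, with axis $\alpha=axis(s_0)$ and elementary closure $E(s_0)$, which has infinite index in $G$ because $G$ is not virtually cyclic. For each $s\in S$ I would produce a conjugate $s^{g(s)}\in E(s_0)$ if possible, or more robustly: since for loxodromic $s$ the conjugates $s^g$ have axes $g(axis(s))$ and one can conjugate so that a long segment of this axis fuses with a long segment of $\alpha$, Lemma \ref{wpd} then forces $s^{g(s)}\in E(s_0)$; for elliptic $s$ (bounded orbits) one conjugates it to fix a point far along $\alpha$, which by an argument as in Lemma \ref{unique} and the acylindricity puts $s^{g(s)}$ into the stabilizer of a huge ball around a point of $\alpha$, which one checks lies in $E(s_0)$ after suitable choices. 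In all cases $\langle s^{g(s)}\mid s\in S\rangle \le E(s_0)\subsetneq G$.

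Second, suppose every element of $S$ is elliptic. Here I would invoke the structure theory: an acylindrically hyperbolic group is never generated by elliptic elements when acting acylindrically on $X$ — more precisely, by Osin, $G$ contains a loxodromic WPD element $t$, and using the separation/ping-pong technology of Theorem \ref{remark.torsion} (or rather its underlying geometry, with $A_i$ taken to be suitably conjugated coarse fixed sets of the elliptics $s$) one arranges the $CFix$ of each $s^{g(s)}$ to be $K$-separated from the axis of $t$, so that $t\notin\langle s^{g(s)}\mid s\in S\rangle$, again a proper subgroup. Concretely, each elliptic $s$ has bounded orbits, so $s^{g(s)}$ can be conjugated to have its coarse fixed set contained in a fixed small ball $B$; choosing $t$ loxodromic with axis disjoint from a neighborhood of $B$, the geometry forbids $t$ from lying in the generated subgroup, as any word in elliptics with coarse fixed sets near $B$ must itself nearly fix a point near $B$.

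The main obstacle is the elliptic case and the uniformity in the choice of conjugators: one must verify, using acylindricity, that finitely or infinitely many elliptic elements conjugated to nearly fix a common large ball really do generate a subgroup that is ``trapped'' near that ball, i.e. cannot contain a loxodromic. This is exactly the kind of statement that Claim 1/Claim 2 in the proof of Theorem \ref{free} and the $K$-separation hypotheses of Theorem \ref{remark.torsion} are designed to give, so I expect it to go through, but the bookkeeping — making sure the conjugators can be chosen simultaneously for \emph{all} $s\in S$, possibly an infinite set, and that $L(R),N(R)$ from acylindricity control everything — is where the real work lies. A clean way to package this is to first reduce to $S$ finite (if $G$ is not even FIG the statement for IG would need the infinite version, so one genuinely must handle arbitrary $S$), and then to note that the subgroup generated by the conjugates stabilizes, up to bounded error, a point or a quasi-line in $X$ and therefore is elementary, hence proper.
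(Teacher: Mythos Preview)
Your approach has a genuine gap in both cases. In your first case you want to conjugate every $s\in S$ into the elementary closure $E(s_0)$, which is virtually cyclic. This is impossible in general: translation length is a conjugacy invariant, and the translation lengths realized in $E(s_0)$ form a very thin set (essentially multiples of a single number, up to a finite group), so a loxodromic $s$ with an incommensurable translation length has no conjugate in $E(s_0)$. Concretely, in $F_2=\langle a,b\rangle$ acting on its Cayley tree, $E(a)=\langle a\rangle$, and no conjugate of $b$ lies in $\langle a\rangle$; your ``align a long segment of $axis(s)$ with $\alpha$'' step already fails here, since any translate of the $b$-axis meets the $a$-axis in at most a point. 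The elliptic sub-case of Case 1 fails for the same reason: $E(s_0)$ has only finitely many torsion elements up to conjugacy, so most finite-order $s$ cannot be conjugated into it. Your second case has the additional problem that products of elliptics with nearby coarse fixed sets can perfectly well be loxodromic (two rotations in $\mathbb H^2$ about nearby centers typically compose to a hyperbolic), so the subgroup they generate is not ``trapped near $B$''.

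The paper's argument goes in the opposite direction: instead of pushing conjugates \emph{together} into a small subgroup, it pushes them \emph{apart}. One takes a representative $g_i$ of each nontrivial conjugacy class, then uses a Schottky subgroup $F<G$ (from Proposition \ref{schottky}, after reducing to the case of trivial maximal finite normal subgroup) to conjugate the $g_i$ so that their min-sets $M(g_i')$ are $K$-separated in the sense of Theorem \ref{remark.torsion}. That theorem then gives $\langle g_1',\dots,g_n'\rangle=\langle g_1'\rangle*\cdots*\langle g_n'\rangle$ for every $n$. If this free product is already proper in $G$ we are done; if it equals $G$, one replaces three of the $g_i'$ by further conjugates $g_i''$ generating a proper subgroup of $\langle g_1'\rangle*\langle g_2'\rangle*\langle g_3'\rangle$, and the retraction $G\to\langle g_1'\rangle*\langle g_2'\rangle*\langle g_3'\rangle$ killing the other factors shows the new set generates a proper subgroup of $G$.
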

In other words, $G$ contains a proper subgroup such that 
any element in $G$ is conjugate to some element in  the subgroup.

\subsection{Elementary facts from $\delta$-hyperbolic spaces}
Suppose $G$ acts on a $\delta$-hyperbolic space $X$.

For $g \in G$, define its {\it minimal translation length} by 
$$min(g)=\inf_{p \in X}|p-g(p)|$$
It is a well-known fact that if $min(g) \ge 10 \delta$ then 
$g$ is hyperbolic. (To be precise, we assume $\delta >0$ here.)

For $L>0$ define a subset 
$$X(g,L)=\{x \in X| |gx-x| \le L\}$$
and put 
$$M(g)=X(g, min(g)+1000\delta)$$
$M(g)$ is a $g$-invariant non-empty set.
If $g$ is hyperbolic, then $M(g)$ is contained in a Hausdorff
neighborhood of an axis of $g, axis(g)$.
This is an easy exercise and we leave it to the reader. 

\begin{lemma}\label{center}
If $g$ has a bounded orbit, then $min(g) \le 6 \delta$.
\end{lemma}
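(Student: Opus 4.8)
The plan is to use the standard fact that in a $\delta$-hyperbolic space, if the orbit $\{g^n(p)\}_{n \in \Z}$ of a point $p$ is bounded, then $g$ cannot be hyperbolic, together with the contrapositive of the fact quoted just above: if $min(g) > 6\delta$ (in fact $\geq 10\delta$ suffices, but we want the sharper constant) then $g$ is hyperbolic. So the real content is to improve the threshold slightly and argue directly: if $g$ has a bounded orbit then $min(g)$ is at most $6\delta$.

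First I would pick any point $x \in X$ and consider the bi-infinite orbit $\{g^n(x)\}_{n\in\Z}$, which is bounded by hypothesis; let $p$ be (a coarse center of) this orbit, or more simply work with a point $p$ realizing $min(g)$ up to a small additive error, say $|p - g(p)| \le min(g) + \ep$ for tiny $\ep>0$. The key step is a standard ``local-to-global'' / thin-bigon estimate: consider the geodesic $[p, g^2(p)]$ and the concatenation $[p, g(p)] \cup [g(p), g^2(p)]$, which is a path of length $\le 2(min(g)+\ep)$. If $min(g)$ were large, one shows the midpoint-type argument forces $g(p)$ to lie close to $[p, g^2(p)]$, and then iterating, the orbit $\{g^n(p)\}$ fellow-travels a geodesic ray and hence is unbounded — contradiction. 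Carefully tracking constants in this argument in a $\delta$-hyperbolic space yields that the orbit is bounded only if $min(g) \le 6\delta$; the relevant inequality is that the Gromov product $(g^{-1}(p) \mid g(p))_p$ is bounded below by something like $min(g) - 2\delta$, so that if $min(g) > 6\delta$ the broken geodesic through $p$, $g(p)$, $g^2(p)$, \dots is a $(1, C)$-quasigeodesic for suitable $C$, forcing unboundedness.

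The cleanest route is probably: assume $min(g) > 6\delta$ and deduce $g$ is hyperbolic with a genuine (quasi-)axis, so every orbit is unbounded, contradicting the hypothesis. This is exactly the contrapositive of the ``$min(g)\ge 10\delta \Rightarrow g$ hyperbolic'' fact with the constant pushed down to $6\delta$; the proof of that fact is the broken-geodesic-is-quasigeodesic argument sketched above, and one simply checks that $6\delta$ is enough of a margin for the three-point Gromov-product inequality to propagate.

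The main obstacle is bookkeeping the hyperbolicity constant: one must verify that the margin $min(g) - 6\delta > 0$ is genuinely enough for the broken orbit path to be a quasigeodesic (the usual statements are stated with a looser constant like $10\delta$ or $100\delta$), so some care with the definition of $\delta$-hyperbolicity in use (thin triangles vs.\ Gromov four-point condition) is needed to land exactly on $6\delta$. Since the excerpt already grants us ``$min(g)\ge 10\delta \Rightarrow g$ hyperbolic'' as known, and since the statement of this lemma only needs \emph{some} explicit bound, if tightening to $6\delta$ turns out delicate one can instead replace the broken-geodesic argument by the direct estimate on $(g^{-1}p \mid gp)_p$ and conclude.
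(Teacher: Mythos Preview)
Your approach diverges from the paper's, and in fact you briefly name the paper's idea and then discard it. The paper's proof is a two-line application of the coarse circumcenter: take the orbit $\{g^n(x)\}$ of any point, which is bounded and $g$-invariant; let $Z$ be the set of its centers (points realizing the infimal circumradius up to a fixed additive error). The standard fact in $\delta$-hyperbolic geometry is that $Z$ is nonempty and has diameter at most $6\delta$. Since the orbit is $g$-invariant, so is $Z$; hence for $z\in Z$ we have $g(z)\in Z$ and $|z-g(z)|\le 6\delta$, giving $\min(g)\le 6\delta$ immediately. This is where the specific constant $6\delta$ comes from.

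Your broken-geodesic route (``$\min(g)$ large $\Rightarrow$ orbit of a near-minimizer is a quasigeodesic $\Rightarrow$ unbounded'') is a legitimate strategy for showing that large $\min(g)$ forces hyperbolicity, but as you yourself flag, it does not naturally produce the threshold $6\delta$: the local-to-global lemma for broken geodesics needs a margin that is typically stated as $8\delta$ or $10\delta$ or worse, and your proposed sharpening via the Gromov product $(g^{-1}p\mid gp)_p$ is not fleshed out. In particular, for a point $p$ nearly realizing $\min(g)$ you have no direct control on that Gromov product without a separate variational argument (e.g.\ comparing with the midpoint of $[p,g(p)]$), and carrying that through to land on exactly $6\delta$ would be more work than the one-line center argument. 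So: go back to the parenthetical you wrote and then dropped---``let $p$ be (a coarse center of) this orbit''---and stay there.
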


\begin{proof}
This is well known too. 
Let $Z$ be the set of centers of the orbit of a point $x$ by $g$.
$Z$  is
invariant by $g$ and its diameter is at most 
$6\delta$, therefore is  contained in $X(g,6\delta)$.
\end{proof}

\begin{lemma}\label{sublevel.set}
If $L\ge min(g)+1000\delta$, then 
$X(g,L)$ is $100\delta$-quasi-convex.
\end{lemma}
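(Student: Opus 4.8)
The plan is to reduce the statement to a single coarse–convexity estimate for the displacement function $\phi(x):=d(x,g(x))$, and then to a statement about two points on the ``boundary'' of $X(g,L)$. I would begin with the elementary observation that $\phi$ is $2$-Lipschitz and coarsely convex along geodesics: for any geodesic $[a,b]$ and any $z\in[a,b]$,
\[
d(z,g(z))\le\max\{\,d(a,g(a)),\,d(b,g(b))\,\}+2\delta .
\]
This has a two-line proof. By $\delta$-thinness of the triangle with vertices $a,b,g(z)$, the point $z$ is $\delta$-close to some point $r$ on one of $[a,g(z)]$, $[b,g(z)]$. If $r\in[a,g(z)]$, then $d(z,g(z))\le\delta+d(r,g(z))=\delta+d(a,g(z))-d(a,r)\le 2\delta+d(a,g(z))-d(a,z)$, and $d(a,g(z))\le d(a,g(a))+d(g(a),g(z))=d(a,g(a))+d(a,z)$ finishes it; the case $r\in[b,g(z)]$ is symmetric. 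In particular, if $x,y\in X(g,L)$ then every point of $[x,y]$, and of every subsegment of it, has displacement $\le L+2\delta$.

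Next I would set up the reduction. Take $x,y\in X(g,L)$ and $z\in[x,y]$; we must find a point of $X(g,L)$ within $100\delta$ of $z$. If $d(z,g(z))\le L$ take $z$ itself, so assume $d(z,g(z))>L$. Since $\phi$ is continuous on $[x,y]$ and $\phi(x),\phi(y)\le L<\phi(z)$, there is a point $z_1\in[x,z]$ closest to $z$ with $\phi(z_1)=L$, and likewise a point $z_2\in[z,y]$; then $z\in[z_1,z_2]$, $\phi=L$ at $z_1$ and $z_2$, and $\phi>L$ strictly on the open segment $(z_1,z_2)$. It now suffices to prove $d(z_1,z_2)\le 100\delta$, for then $d(z,z_1)\le 100\delta$ and $z_1\in X(g,L)$.

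The heart of the proof is thus to bound $d(z_1,z_2)$, and this is where the hypothesis $L\ge\min(g)+1000\delta$ enters. Fix a point $p$ with $d(p,g(p))\le\min(g)+\delta$; then $\phi(p)\le L-999\delta$, so by $2$-Lipschitzness $d(z_i,p)\ge 499\delta$, i.e. $z_1,z_2$ are ``far from the minimal locus''. I would then use the exact identity $d(z_1,z_2)=(z_1\mid p)_{z_2}+(z_2\mid p)_{z_1}$ and bound each Gromov product by $50\delta$. Bounding, say, $(z_2\mid p)_{z_1}$ amounts to showing that the geodesics $[z_1,z_2]$ and $[z_1,p]$ separate within a bounded multiple of $\delta$ of $z_1$: along $[z_1,z_2]$ the displacement stays $>L$, whereas along $[z_1,p]$ it must descend to $\le L-999\delta$, and a long stretch of $\delta$-fellow-traveling would force $\phi$ to be simultaneously $>L-O(\delta)$ near $z_1$ and to drop below $L$ only far along $[z_1,p]$. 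The main obstacle is making ``$\phi$ must drop below $L$ quickly along $[z_1,p]$'' rigorous for an arbitrary isometry $g$, since displacement functions are only coarsely convex, not monotone, along geodesics. I expect to close this by using that the coarse minimal locus $M(g)=X(g,\min(g)+1000\delta)$ is quasi-convex, so that $X(g,L)$ is a (large) neighbourhood of a quasi-convex set and hence $O(\delta)$-quasi-convex, with constants tracked below $100\delta$; the quasi-convexity of $M(g)$ follows from the fact already recorded that $M(g)$ lies in a Hausdorff neighbourhood of $axis(g)$ when $g$ is hyperbolic, and from Lemma \ref{center} when $g$ has a bounded orbit.
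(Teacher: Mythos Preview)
Your coarse convexity inequality for the displacement function $\phi$ is correct and immediately yields $[x,y]\subset X(g,L+2\delta)$ for $x,y\in X(g,L)$. But as you yourself recognise, this is not the same as $[x,y]\subset N_{100\delta}(X(g,L))$: you would additionally need that every point with $\phi\in(L,L+2\delta]$ lies within $O(\delta)$ of $X(g,L)$, and coarse convexity alone does not prevent $\phi$ from hovering in that band over an arbitrarily long segment of $[z_1,z_2]$. Your Gromov-product computation does not close this, and you concede as much.

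The fallback you sketch --- first prove $M(g)$ is quasi-convex, then realise $X(g,L)$ as a metric neighbourhood of $M(g)$ --- has two genuine gaps. First, for non-hyperbolic $g$ you invoke Lemma~\ref{center}, but that lemma only says $\min(g)\le 6\delta$; it says nothing about quasi-convexity of $M(g)$, which is precisely the case $L=\min(g)+1000\delta$ of the statement you are proving, so this is circular. (Your dichotomy ``hyperbolic vs.\ bounded orbit'' also omits parabolics.) Second, the assertion that $X(g,L)$ is a metric neighbourhood of $M(g)$ is exactly the missing ingredient: knowing that a point with $\phi(x)\le L$ lies within roughly $L/2$ of a point with $\phi\le\min(g)+O(\delta)$ requires more than convexity of $\phi$. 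The paper supplies this by analysing the triangle $p,x,g(x)$ with $\phi(p)$ small: at its branch point $c$ one has $\phi(c)\le O(\delta)$, and along $[c,x]$ one has $\phi(z)\approx 2\,d(c,z)$ --- that is, $\phi$ is coarsely \emph{monotone}, not merely coarsely convex, on the radial segment. This monotonicity is what forces any $z\in[p,x]$ with $\phi(z)>L$ to lie within $O(\delta)$ of the endpoint $x\in X(g,L)$; two such radial segments $[p,x]$ and $[p,y]$ then handle $[x,y]$ by $\delta$-thinness. Your convexity estimate is a consequence of, but strictly weaker than, this radial picture.
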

\begin{proof}
We define a function on $X$ as follows:
$t_g(x)=|x-g(x)|$.
\\
{\it Case 1}. $min(g) \le 10 \delta$.
\\
Fix $p \in X(g,10\delta)\subset X(g,L)$.
Suppose $x \in X(g,L)$ is given.
We will show that  $[p,x]$ is contained in the $20 \delta$-neighborhood
of $X(g,L)$.
If $|x-p|\le 100 \delta$, then 
by triangle inequality, $t_g(z) \le 210 \delta$ for every $z \in [p,x]$, 
so that $[p,x] \subset X(g,L)$.
So assume $|x-p|>100\delta$.
To compute $t_g(z)$ for $z \in [p,x]$, 
draw a triangle $\Delta$ for $p,x,g(x)$, 
and let $c \in [p,x]$ be a branch point of this triangle,
i.e., the distance to each side of $\Delta$ from $c$
is at most $\delta$.
Since $|p-g(p)|\le 10 \delta$, $t_g(z) \le 20\delta$
for any point $z\in[p,c]$. For  $z\in [c,x]$, $t_g(z)$ is roughly equal
to $2d(c,z)$, with an additive error
at most $20 \delta$. Also it  is roughly maximal at $x$ on $[p,x]$.
(Imagine the case that $X$ is a tree and $p=g(p)$.)
It follows that $[p,x]$ is contained in the  $20 \delta$-neighborhood
of $X(g,L)$. 

Now suppose another point $y\in X(g,L)$ is given.
Then $[x,y]$ is contained in the $\delta$-neighborhood
of $[p,x]\cup [p,y]$, so that $[x,y]$ is contained
 in the $21 \delta$-neighborhood of $X(g,L)$.
\\
{\it Case 2}. $min(g) \ge 10 \delta$.
\\
Then $g$ is hyperbolic. 
To simplify the argument, let's assume that there is a geodesic axis for $g$.
Then for any $x \in X$, 
$$|t_g(x) - \{trans(g)+2 d(x, axis(g))\}| \le 20 \delta$$
To see this let $x' \in axis(g)$ be a nearest point from $x$.
Then the Hausdorff distance between $[x,g(x)]$ and
$[x,x']\cup [x',g(x')]\cup [g(x'),g(x)]$ is at most $5 \delta$,
and the estimate follows. 

It follows from the above estimate that if $ x \in X(g, L)$, then 
$[x,x']$ is contained in the $20\delta$-neighborhood of $X(g,L)$.
For $y \in X(g,L)$, let $y' \in axis(g)$ be a nearest point from another point  $y$
to $axis(g)$. 
Then $[x,x']\cup [x',y']\cup [y',y]$ is contained in the 
 $20\delta$-neighborhood
of $X(g,L)$.
But since $[x,y]$ is contained in the $5\delta$-neighborhood of 
$[x,x']\cup [x',y']\cup [y',y]$, $[x,y]$ is contained in the $25\delta$-neighborhood
of $X(g,L)$.

The argument for the case that $g$ has only a quasi-geodesic (with uniform
quasi-geodesic constants depending only on $\delta$) as an axis
is similar and  we only need to modify the constants in the
argument. We omit the details. 
\end{proof}

Lemma \ref{sublevel.set} implies 
\begin{lemma}\label{min.set}
$M(g)$ is $100\delta$-quasi-convex.
\end{lemma}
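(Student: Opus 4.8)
The plan is to deduce Lemma \ref{min.set} directly from Lemma \ref{sublevel.set} by unwinding the definition of $M(g)$. Recall that $M(g)=X(g,\min(g)+1000\delta)$, so it suffices to check that the constant $L=\min(g)+1000\delta$ satisfies the hypothesis of Lemma \ref{sublevel.set}, namely $L\geq \min(g)+1000\delta$. This holds with equality, so Lemma \ref{sublevel.set} applies verbatim and gives that $X(g,\min(g)+1000\delta)$ is $100\delta$-quasi-convex. That is exactly the claim.

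In slightly more detail, I would write: by definition $M(g)=X(g,L)$ with $L=\min(g)+1000\delta$, and since $L$ trivially satisfies $L\geq \min(g)+1000\delta$, Lemma \ref{sublevel.set} yields that $X(g,L)$, hence $M(g)$, is $100\delta$-quasi-convex. One should also note $M(g)$ is nonempty — this was already observed in the text right after the definition of $M(g)$, since $\min(g)$ is an infimum and any point realizing $|p-g(p)|$ to within $1000\delta$ lies in $M(g)$; and it is $g$-invariant because $|g(x)-g^2(x)|=|x-g(x)|$, so $x\in M(g)$ implies $g(x)\in M(g)$. These remarks are not strictly needed for quasi-convexity but keep the statement self-contained.

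There is essentially no obstacle here: the entire content of Lemma \ref{min.set} was carried out in the proof of Lemma \ref{sublevel.set}, and this final lemma is just the specialization to the particular sublevel set $M(g)$ that will be used later (presumably in the proof of Theorem \ref{notIG}, where one needs a $g$-invariant quasi-convex set coarsely equal to an axis when $g$ is hyperbolic, and coarsely bounded when $g$ has bounded orbits, cf. Lemma \ref{center}). So the proof is a one-line invocation. The only thing worth double-checking is that the constant $1000\delta$ is the same in the definition of $M(g)$ and in the hypothesis of Lemma \ref{sublevel.set} — it is, so nothing needs to be adjusted.

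\begin{proof}
By definition $M(g)=X(g,L)$ where $L=\min(g)+1000\delta$. Since $L\geq \min(g)+1000\delta$, Lemma \ref{sublevel.set} applies and shows that $X(g,L)$ is $100\delta$-quasi-convex. Hence $M(g)$ is $100\delta$-quasi-convex.
\end{proof}
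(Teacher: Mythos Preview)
Your proof is correct and matches the paper's own argument exactly: the paper simply notes that Lemma \ref{sublevel.set} implies Lemma \ref{min.set}, which is precisely the specialization $L=\min(g)+1000\delta$ you spell out.
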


Let $\partial X$ denote the boundary at infinity 
of $X$. For a quasi-convex subset $Y \subset X$, 
let $\partial Y\subset \partial X$ be the boundary at infinity of $Y$.

\begin{lemma}\label{boundary.min}
If $p\in \partial M(g) \subset \partial X$, then 
$g(p)=p$.
\end{lemma}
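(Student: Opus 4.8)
The plan is to show that $M(g)$ is $g$-invariant and coarsely preserved by $g$, so any point $p \in \partial M(g)$ is fixed by the boundary homeomorphism induced by $g$. First I would recall that $M(g) = X(g, min(g)+1000\delta)$ is nonempty (by definition of $min(g)$) and literally $g$-invariant: if $|x - gx| \le min(g)+1000\delta$ then, since $g$ is an isometry, $|gx - g(gx)| = |x - gx| \le min(g)+1000\delta$, so $g(M(g)) = M(g)$. By Lemma \ref{min.set}, $M(g)$ is $100\delta$-quasi-convex, so it is a $\delta'$-hyperbolic geodesic(-ish) subspace in its own right and has a well-defined boundary $\partial M(g) \subset \partial X$ (the inclusion $M(g) \hookrightarrow X$ is a quasi-isometric embedding onto its image, so it induces a topological embedding $\partial M(g) \hookrightarrow \partial X$).

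Next I would use that an isometry of $X$ that maps $M(g)$ onto itself acts on $\partial M(g)$ via its action on $\partial X$; since $g(M(g)) = M(g)$, the homeomorphism $\partial g : \partial X \to \partial X$ restricts to a homeomorphism of $\partial M(g)$. It remains to see this restriction is the identity. The key point is that $g$ moves every point of $M(g)$ a bounded amount: for $x \in M(g)$ we have $|x - gx| \le min(g) + 1000\delta$, a bound independent of $x$. A boundary point $p \in \partial M(g)$ is represented by a sequence $x_n \in M(g)$ with $x_n \to p$; then $g(x_n) \in M(g)$ as well, and $d(x_n, g(x_n))$ stays bounded, so $(x_n)$ and $(g(x_n))$ are equivalent sequences in the Gromov boundary (the Gromov product $(x_n \mid g(x_n))_{x_0} \to \infty$ follows from the uniform bound $d(x_n, g x_n) \le C$ together with $d(x_0, x_n)\to\infty$). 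Hence $g(x_n) \to p$ too, i.e. $\partial g(p) = \lim g(x_n) = p$.

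The only genuinely delicate point is the bookkeeping around boundaries of quasi-convex subspaces: one must be a little careful that $\partial M(g)$ is well-defined and embeds in $\partial X$, and that sequences converging to the same point of $\partial M(g)$ also converge to the same point of $\partial X$, which is where the quasi-convexity from Lemma \ref{min.set} is used. But all of this is standard $\delta$-hyperbolic geometry, and none of it is hard once one fixes the model for the boundary. I would therefore present the argument in two clean steps: (1) $g(M(g)) = M(g)$ and $g$ displaces points of $M(g)$ uniformly boundedly; (2) any isometry of a geodesic $\delta$-hyperbolic space that preserves a subset and displaces its points a bounded amount fixes every point of the boundary of that subset — apply this to $g$ and $M(g)$.
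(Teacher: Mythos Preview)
Your proof is correct and follows essentially the same approach as the paper: both arguments rest on the fact that $g$ displaces every point of $M(g)$ by at most $min(g)+1000\delta$, and hence fixes any boundary point of $M(g)$. The only cosmetic difference is that the paper represents $p$ by a geodesic ray from a point of $M(g)$ (which by quasi-convexity stays in a bounded neighborhood of $M(g)$, so each of its points is moved a bounded amount), whereas you represent $p$ by a sequence in $M(g)$; both presentations lead immediately to $g(p)=p$.
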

\begin{proof}
Let $\gamma$ be a geodesic ray from a point in $M(g)$ that 
tends to $p$. Then the ray is contained in the $10\delta$-neighborhood
of $M(g)$. So every point of the ray is moved by $g$ by a bounded amount,
therefore $p$ is fixed by $g$.
\end{proof}

When $f$ is hyperbolic, the 
subgroup of elements in $G$ that fix each point of  
 $\partial (axis(f))$ is called  the {\it elementary closure} of $f$, denoted by $E(f)$.

\begin{lemma}\label{boundary.axis}
Assume the action of $G$ is acylindrical on $X$. If $g$ fixes one point
in $\partial (axis(f))$, then $g \in E(f)$.
\end{lemma}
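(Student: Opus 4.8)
The statement to prove is Lemma~\ref{boundary.axis}: if the action of $G$ on $X$ is acylindrical and $g$ fixes one point of $\partial(axis(f))$, then $g\in E(f)$.

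\medskip

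\noindent\textbf{Proof proposal.}
The plan is to show that if $g$ fixes one endpoint $\xi$ of $axis(f)$ but does not fix the other endpoint $\eta$, then one can produce, for arbitrarily large $R$, arbitrarily many group elements coarsely fixing a fixed pair of far-apart points, contradicting acylindricity. First I would set up notation: let $\xi,\eta\in\partial X$ be the two endpoints of $axis(f)$, and assume without loss of generality (replacing $f$ by $f^{-1}$) that $f$ translates toward $\eta$. Suppose $g(\xi)=\xi$. The goal is $g\in E(f)$, i.e.\ $g$ fixes both $\xi$ and $\eta$; so for contradiction assume $g(\eta)\neq\xi$ (note $g(\eta)\neq\eta$ already follows if $g\notin E(f)$, but the cleaner dichotomy is $g(\eta)=\eta$ versus $g(\eta)\neq\eta$).

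\medskip

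\noindent The key geometric observation is this: since $g$ fixes $\xi$, the two quasi-geodesic rays $axis(f)$ and $g(axis(f))$, followed toward $\xi$, are Hausdorff-close (within a constant $C_0=C_0(\delta)$ of each other) along their entire infinite tails converging to $\xi$. On the other hand, the forward ray of $axis(f)$ (toward $\eta$) and the forward ray of $g(axis(f))$ (toward $g(\eta)$) diverge, since $\eta\neq g(\eta)$ in $\partial X$. So $axis(f)$ and $g(axis(f))$ fellow-travel along a ray and then part; in particular they fellow-travel, up to the constant $C_0$, along a half-line of infinite length. Now I would combine this with the translation action of $f$: for each $n$, the element $f^n g f^{-n}$ also fixes $\xi$ (it fixes $axis(f^n g f^{-n}) = f^n(g(axis(f)))$'s relevant endpoint), and conjugating by $f^n$ shifts the ``parting point'' of $axis(f)$ and $g(axis(f))$ by roughly $n\cdot trans(f)$ along $axis(f)$ toward $\eta$. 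Fix two points $v,w$ on $axis(f)$ far out toward $\xi$, at distance $\geq L(R)$ apart, where $R = R(\delta, C_0)$ is the bound from the fellow-traveling estimate combined with Lemma~\ref{unique}-type reasoning (a point on $axis(f)$ near $\xi$ is moved only a bounded amount by any element fixing $\xi$ and coarsely preserving $axis(f)$). Then for all sufficiently large $n$, the elements $g, fgf^{-1}, \dots, f^n g f^{-n}$ — being all distinct as maps, since their axes part at different points — each move $v$ and $w$ by at most $R$. This gives $n+1$ elements moving $v,w$ a bounded amount with $|v-w|\geq L(R)$, and letting $n\to\infty$ contradicts acylindricity.

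\medskip

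\noindent The main obstacle, and the step requiring care, is arguing that the elements $\{f^n g f^{-n}\}_n$ are genuinely distinct (so that we really get unboundedly many contradicting elements) and that they all coarsely fix the \emph{same} pair $v,w$. Distinctness: if $f^ng f^{-n} = f^m g f^{-m}$ with $n\neq m$ then $f^{n-m}$ commutes with $g$, forcing $g$ to preserve $axis(f)$ and hence $g(\eta)=\eta$, contradicting our assumption; this is clean. The ``same $v,w$'' point is the real content: I must choose $v,w$ deep enough in the $\xi$-direction that \emph{every} conjugate $f^n g f^{-n}$ still has its ``parting point'' from $axis(f)$ on the $\eta$-side of both $v$ and $w$ — but conjugating by $f^n$ pushes the parting point toward $\eta$, i.e.\ \emph{away} from a fixed deep point $v$, so in fact once $v,w$ are chosen far enough toward $\xi$, \emph{all} $n\geq 0$ work simultaneously. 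Making this quantitative: the parting point of $axis(f)$ and $g(axis(f))$ lies within bounded distance of $axis(f)$; call its (signed) position $t_0$ along $axis(f)$ measured from some basepoint toward $\eta$. Conjugating by $f^n$ moves it to approximately $t_0 + n\,trans(f)$, which is $\geq t_0$ for $n\geq 0$. So choosing $v,w$ at positions $< t_0 - (\text{const})$ guarantees $f^n(g(axis(f)))$ fellow-travels $axis(f)$ past both $v$ and $w$ for every $n\geq 0$, and then the $\delta$-hyperbolic fellow-traveling estimate plus the ray-convergence to $\xi$ bounds $|v - f^ngf^{-n}(v)|$ and $|w - f^ngf^{-n}(w)|$ by a uniform $R$. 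With $|v-w|$ taken $\geq L(R)$ we contradict acylindricity, proving $g(\eta)=\eta$ and hence $g\in E(f)$.
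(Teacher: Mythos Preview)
Your proposal is correct and follows essentially the same line as the paper: conjugate $g$ by powers of $f$, observe that all these conjugates move points on the $\xi$-ray of $axis(f)$ by a uniformly bounded amount, and invoke acylindricity to force two conjugates to coincide, whence $g$ commutes with a nontrivial power of $f$ and so lies in $E(f)$. The paper does this directly in a few lines (finitely many $f^{-N}gf^N$ by acylindricity $\Rightarrow$ $g$ commutes with some $f^k$), whereas you phrase it as a contradiction from assuming $g(\eta)\neq\eta$.

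Two remarks on presentation. First, the entire ``parting point'' discussion is superfluous: once you know $|g(x)-x|$ is bounded on the $\xi$-ray, the bound on $|f^n g f^{-n}(v)-v|$ is immediate from the identity $f^n g f^{-n}(v)=f^n\bigl(g(f^{-n}v)\bigr)$ and the fact that $f^{-n}v$ lies further along that same ray (since $f$ translates toward $\eta$); no tracking of where $g(axis(f))$ diverges from $axis(f)$ is needed. Second, the step you label ``fellow-traveling estimate'' is the one place needing a word of care: Hausdorff-closeness of the two rays only says $g(v)$ is near \emph{some} point of $axis(f)$, not near $v$ itself. The missing observation (which the paper also asserts without proof, so you are in good company) is that since $g$ fixes $\xi$, the Busemann displacement $\beta_\xi(g(x))-\beta_\xi(x)$ is constant up to $O(\delta)$, which combined with the Hausdorff-closeness pins $g(v)$ near $v$.
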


\begin{proof}
Let $\gamma$ be a half of $axis(f)$ that tends to the point
fixed by $g$.  Then $|x-gx|$ is bounded for $x \in \gamma$.
Assume that $\gamma$ tends to the direction 
that $f$ translates $axis(f)$ (otherwise, we let $N<0$ below).
Then for any $N>0$ and $x \in \gamma$,
$|f^{-N}gf^N(x)-x|$ is bounded. 
Now by acylindricity (apply it to $x,y \in \gamma$ that are far from each other), there are only finitely many possibilities
for $f^{-N}gf^N$, so $g$ commutes with a nontrivial power of $f$.
So $g$ moves each point in $axis(f)$ by a bounded amount, 
therefore $g \in E(f)$.
\end{proof}

\begin{lemma}\label{projection.bdd}
Assume $f$ is hyperbolic on $X$. If $g \not\in E(f)$,
then $\pi_{axis(f)}M(g)$ is bounded. 
\end{lemma}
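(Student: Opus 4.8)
The plan is to argue by contradiction: suppose $\pi_{axis(f)}M(g)$ has infinite diameter. I would first recall that $M(g)$ is $100\delta$-quasi-convex (Lemma \ref{min.set}) and that $axis(f)$ is a (quasi-)geodesic, so the nearest-point projection onto $axis(f)$ restricted to $M(g)$ is coarsely well-defined and coarsely Lipschitz. If the image were unbounded, then in the $\delta$-hyperbolic space $X$ a long portion of $axis(f)$ must lie uniformly close to $M(g)$; more precisely, I would produce points $x,y\in M(g)$ whose projections $\pi_{axis(f)}(x), \pi_{axis(f)}(y)$ are as far apart as we like, and then, using thin triangles/quadrilaterals, conclude that the geodesic (or axis fellow-traveller) between those two projection points runs through the $C(\delta)$-neighborhood of $M(g)$, where $C(\delta)$ depends only on $\delta$ and the quasi-convexity constant of $M(g)$ and of $axis(f)$.

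Next I would exploit that $M(g)$ is a ``small translation'' set: every point of $M(g)$ is moved by $g$ a distance at most $min(g)+1000\delta$. Hence on the long sub-segment $[u,v]\subset axis(f)$ that is $C(\delta)$-close to $M(g)$, every point is moved by $g$ a uniformly bounded amount — say at most $min(g)+2000\delta+$ (a constant), by the triangle inequality applied through the nearby point of $M(g)$. Now apply acylindricity of the $G$-action on $X$: once $[u,v]$ is long enough (longer than $L(R)$ for the relevant $R$), the number of elements $h\in G$ with $|u-h u|,|v-h v|\le R$ is at most $N(R)$. Conjugating $g$ by powers of $f$ as in the proof of Lemma \ref{boundary.axis}: since $f$ preserves $axis(f)$ and its translation length is positive, for each $n$ the element $f^{-n}g f^{n}$ also moves a long sub-segment of $axis(f)$ (a translate of $[u,v]$, still inside $axis(f)$ by choosing $n$ appropriately, or intersecting it in a long piece) a bounded amount. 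Letting $n$ range over more than $N(R)$ values forces $f^{-n}g f^{n}=f^{-m}g f^{m}$ for some $n\ne m$, i.e. $g$ commutes with a nontrivial power of $f$, so $g\in E(f)$, contradicting the hypothesis.

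The main obstacle is the second step — converting the unboundedness of the projection into a genuinely long segment of $axis(f)$ on which $g$ has uniformly bounded displacement, with the bound independent of how far out along $axis(f)$ we look. One has to be careful that $axis(f)$ is only a quasi-geodesic, so ``the geodesic between two projection points'' must be replaced by an argument that a long sub-segment of the quasi-axis itself (not just some geodesic) lies close to $M(g)$; this uses quasi-convexity of $axis(f)$ together with $\delta$-hyperbolicity, exactly the kind of estimate already invoked repeatedly in the paper, so I would cite the standard fact that a quasi-geodesic and a geodesic with the same endpoints fellow-travel. Once that segment is in hand, the acylindricity-plus-conjugation endgame is essentially identical to Lemma \ref{boundary.axis} and is routine. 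I would also note $min(g)$ is finite here: if $g$ had a bounded orbit then $min(g)\le 6\delta$ by Lemma \ref{center}, and if $g$ is hyperbolic with $g\notin E(f)$ then $axis(g)$ and $axis(f)$ are independent quasi-axes and the bounded-projection property is the usual fact about independent hyperbolic isometries; in either case the displacement bound on $M(g)$ is uniform, which is all the argument needs.
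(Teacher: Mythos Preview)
Your argument is correct and, at bottom, the same mechanism as the paper's: unbounded projection forces a half of $axis(f)$ to lie in a bounded neighborhood of $M(g)$, hence $g$ moves that half a bounded amount, and then acylindricity together with conjugation by powers of $f$ gives $g\in E(f)$. The difference is packaging. The paper phrases the first step via the boundary at infinity: if $\pi_{axis(f)}M(g)$ is unbounded it accumulates on some $p\in\partial(axis(f))$, quasi-convexity of both sets gives $p\in\partial M(g)$, Lemma~\ref{boundary.min} then yields $g(p)=p$, and Lemma~\ref{boundary.axis} (which is exactly your acylindricity-plus-conjugation endgame) finishes. So where you produce a long finite segment of $axis(f)$ close to $M(g)$ and run the pigeonhole by hand, the paper passes to a limit point and cites the two prior lemmas; this makes the paper's proof three lines.

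Two small comments. First, your concern that ``$min(g)$ is finite'' is unnecessary: $min(g)\le d(x,gx)$ for any $x$, so it is always finite, and the displacement bound $min(g)+1000\delta$ on $M(g)$ holds by definition without any case split on the type of $g$. Second, your conjugation step is cleanest if you first note that unboundedness of the projection forces unboundedness toward one endpoint of $axis(f)$, so an entire half-axis $\gamma$ lies in a fixed neighborhood of $M(g)$; then for every $n\ge 0$ the element $f^{-n}gf^n$ moves any fixed pair $x,y\in\gamma$ by at most your bound $R$, and acylindricity applies directly. This avoids the bookkeeping about overlapping translates of a single long segment.
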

\begin{proof}
Suppose not. Let $p$ be a point in  $\partial(axis(f))$ that the projection 
of $M(g)$ tends to. 

We claim $p \in \partial M(g)$.
This is because since 
 both $axis(f)$ and $M(g)$ are quasi-convex,
 a half of $axis(f)$ to the direction of $p$ is contained 
 in a bounded neighborhood of $M(g)$.
 
 So, by Lemma \ref{boundary.min} $g(p)=p$, and by Lemma \ref{boundary.axis}
 $g$ is in $E(f)$, a contradiction. 
 \end{proof}

We will use the following result.
It follows from the assumption that $G$ contains a ``hyperbolically
embedded subgroup'' that is non-degenerate, i.e., proper and infinite, 
see Theorem 1.2 in \cite{Osin}.

By a {\it Schottky subgroup} $F<G$ we mean a free subgroup such that
an orbit map $F\to X$ is a quasi-isometric embedding.

\begin{prop}[{\cite[Theorem 6.14]{DGO}}]\label{schottky}
Suppose the action of $G$ is acylindrical
and $G$ is not virtually cyclic. 

Then $G$ contains a unique maximal finite normal subgroup $K$ and a
Schottky subgroup $F$ so that for every nontrivial $f\in F$ any element $g\in
E(f)$ is either contained in $K$ or has a nontrivial power that
commutes with $f$. If $K$ is trivial, $E(f)$ is cyclic.
\end{prop}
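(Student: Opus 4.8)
The plan is to treat this as the acylindrical repackaging of \cite[Theorem 6.14]{DGO} and to build the subgroup $F$ by ping‑pong after first pinning down the finite normal part $K$. Since $G$ is not virtually cyclic and the action is acylindrical, I would first invoke Osin's trichotomy \cite{Osin} to reduce to the non‑elementary (general type) action witnessing acylindrical hyperbolicity: then $G$ has infinitely many pairwise independent loxodromics, and by acylindricity every loxodromic element is WPD. For such an $f$ the elementary closure $E(f)$, i.e. the stabilizer of the two‑point set $\partial(axis(f))$, is virtually cyclic; this is an acylindricity argument for two far‑apart points of $axis(f)$, exactly as in the proof of Lemma \ref{boundary.axis}. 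I would also record that for independent loxodromics $f,h$ the group $E(f)\cap E(h)$ fixes four distinct boundary points and is therefore finite, again by acylindricity.

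Next I would produce $K$. If $N\trianglelefteq G$ is finite and $f$ is loxodromic, normality gives $f^{m}Nf^{-m}=N$ for all $m$; finiteness of $N$ and the pigeonhole principle force $f^{m_1}nf^{-m_1}=f^{m_2}nf^{-m_2}$ with $m_1\neq m_2$ for each $n\in N$, so every $n\in N$ commutes with a nonzero power of $f$ and hence $N\subseteq E(f)$. Fixing two independent loxodromics $f,h$ we get $N\subseteq E(f)\cap E(h)$, a \emph{fixed} finite group. Therefore the subgroup generated by all finite normal subgroups is normal and contained in this finite group, so it is itself finite and normal; this is the unique maximal finite normal subgroup $K$.

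The heart of the argument is the Schottky construction with control of $E(f)$, and this is where the main obstacle lies. I would begin with finitely many pairwise independent loxodromics $a_1,\dots,a_m$ chosen so that $E(a_i)=\langle a_i\rangle\times K$ and $E(a_i)\cap E(a_j)=K$ for $i\neq j$; the existence of a single loxodromic $g$ with $E(g)=\langle g\rangle\times K$ is part of the Osin/DGO package (one arranges $\langle g\rangle\times K$ to be hyperbolically embedded). Because $K$ is the direct factor, it centralizes each $a_i$, so $\langle F,K\rangle=F\times K$ and $F\cap K=1$. Replacing the $a_i$ by sufficiently high powers and using that the projections of the distinct axes onto one another are uniformly bounded, a standard ping‑pong argument on $\partial X$ shows that $a_1,\dots,a_m$ generate a free group $F$ whose orbit map is a quasi‑isometric embedding — a Schottky subgroup — so every nontrivial $f\in F$ is loxodromic. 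The delicate step, and the one I expect to consume the real work, is to arrange the construction so that the description of $E(f)$ holds not merely for the generators but for \emph{every} nontrivial $f\in F$, yielding $E(f)=\langle f_0\rangle\times K$ with $f_0\in F$ the free‑group root of $f$; this requires uniform WPD constants and a genericity argument for the high powers so that no loxodromic outside $F\times K$ is commensurable with any $f\in F$.

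Finally I would read off the stated conclusions from $E(f)=\langle f_0\rangle\times K$. Given $g\in E(f)$, write $g=(f_0^{\,a},\kappa)$ with $\kappa\in K$. If $g$ has infinite order then $a\neq 0$ and $g^{|K|}=(f_0^{\,a|K|},1)$ is a nontrivial power of $g$ lying in the abelian subgroup $\langle f_0\rangle$, which contains $f$; hence $g^{|K|}$ commutes with $f$. If $g$ has finite order then its infinite‑order coordinate vanishes, so $g=(1,\kappa)\in K$. This is precisely the dichotomy in the statement. Moreover, if $K=1$ then $E(f)=\langle f_0\rangle$ is torsion‑free virtually cyclic, hence infinite cyclic, giving the final assertion.
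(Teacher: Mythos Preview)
The paper does not actually prove this proposition: it is quoted as a black box from \cite[Theorem 6.14]{DGO}, with only the one-line remark that the statement follows from the existence of a non-degenerate hyperbolically embedded subgroup (via \cite[Theorem 1.2]{Osin}). So there is no ``paper's own proof'' to compare against; the authors simply import the result.

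Your sketch is a reasonable outline of the DGO/Osin argument and correctly isolates the substantive point: producing $K$ as the union of finite normal subgroups via $N\subseteq E(f)\cap E(h)$, and then building $F$ by ping-pong on high powers of independent loxodromics with $E(a_i)=\langle a_i\rangle\times K$. You are also right that the real work is the step you flag as ``delicate'': upgrading the description of $E(f)$ from the generators $a_i$ to \emph{every} nontrivial $f\in F$. Your proposal does not actually carry this out --- ``uniform WPD constants and a genericity argument'' is a pointer, not a proof --- and it is precisely here that the hyperbolically-embedded machinery of \cite{DGO} does the heavy lifting (one embeds $\langle a_1\rangle\times K,\dots,\langle a_m\rangle\times K$ as a hyperbolically embedded family and reads off the structure of elementary closures from that). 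If you intend this as a self-contained proof rather than an informed summary of \cite{DGO}, that gap needs to be filled; as a summary matching the paper's use of the result, it is adequate, since the paper itself defers entirely to the citation.
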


When the order of $g$ is $N<\infty$, we define 
$$MM(g)=\cup_{0<n<N} M(g^n).$$
This set is invariant by $g$, and contains $M(g)$.
The following is obvious from the definition of
the set $M(g^n)$ and $MM(g)$.
\begin{lemma}\label{neighborhood.MM}
For any $x \in X-MM(g)$ and any non-trivial $h \in <g>$, 
we have $|h(x)-x| > 1000 \delta$.
\end{lemma}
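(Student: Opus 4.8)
The plan is simply to unwind the definitions; there is no real content beyond bookkeeping, which is why the statement is advertised as obvious. First I would reduce to the standard form of the element: since $g$ has order $N$, any nontrivial $h\in\langle g\rangle$ can be written as $h=g^n$ with $1\le n\le N-1$ (reduce the exponent modulo $N$), so $h$ is one of the elements $g^n$ with $0<n<N$ appearing in the union defining $MM(g)$.

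Next, since $x\notin MM(g)$, in particular $x\notin M(g^n)=X(g^n,\,min(g^n)+1000\delta)$. By the definition of the sublevel set $X(g^n,L)=\{y\in X\mid |g^n y-y|\le L\}$, the fact that $x$ does not lie in it means precisely that $|g^n(x)-x|>min(g^n)+1000\delta$. Finally $min(g^n)=\inf_{p\in X}|p-g^n(p)|\ge 0$, being an infimum of nonnegative numbers, so $|h(x)-x|=|g^n(x)-x|>1000\delta$, which is the claim.

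If one wants a slightly sharper picture, one can note that $g^n$ has finite order and hence a bounded orbit, so Lemma \ref{center} gives $min(g^n)\le 6\delta$ and therefore $M(g^n)\subseteq X(g^n,1006\delta)$; this is not needed for the inequality above, but it explains why $MM(g)$ plays the role of a ``coarse fixed set'' for the cyclic group $\langle g\rangle$. I do not anticipate any genuine obstacle here: the only point requiring a moment's care is the reduction modulo $N$, so that the given $h$ is actually covered by the indexing set $0<n<N$ used in the definition of $MM(g)$.
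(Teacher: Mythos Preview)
Your proof is correct and is exactly the unwinding of definitions that the paper has in mind; the paper does not give an explicit argument beyond declaring the lemma ``obvious from the definition of the set $M(g^n)$ and $MM(g)$.'' Your reduction of $h$ to $g^n$ with $0<n<N$ and the chain $x\notin MM(g)\Rightarrow x\notin M(g^n)\Rightarrow |g^n x-x|>min(g^n)+1000\delta\ge 1000\delta$ is precisely the intended reasoning.
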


\begin{remark}
Although we will not use this fact, we observe that 
if $g$ has finite order $N$, then the set $MM(g)$
is $101 \delta$-quasi-convex.
This is because $g$ has an orbit
whose diameter is at most $6 \delta$ (see the 
proof of Lemma \ref{center}), and 
$MM(g)=\cup_{0<n<N} M(g^n)$ is a union of $100 \delta$-
quasi-convex sets all of which contain the bounded orbit. 
Thus $MM(g)$ is $101 \delta$-quasi-convex as desired. 
(Fix a point $x$ from the bounded orbit. Then for any points $y,z
\in MM(g)$, draw a $\delta$-thin triangle for $x,y,z$. $[y,z]$ is
in the $\delta$-neighborhood of $[x,y]\cup[x,z]$.)
\end{remark}

\begin{lemma}\label{conjugation}
Suppose there is a Schottky free subgroup $F<G$
such that any non-trivial $f \in F$ is hyperbolic and $E(f)$ is cyclic.

Let $S=\{g_1, g_2, g_3, \cdots\}$ be a (finite or infinite) set
of non-trivial elements in $G$.

Then for any given $K>0$ there is a set $S'=\{g_i'\}$ in $G$ such that 
\begin{itemize}
\item
$g_i$ and $g_i'$ are conjugate for each $i$. 

\item
For any $n>0$, 
the sets $M(g_i'), \cdots, M(g_n')$
are $K$-separated,
and moreover, this property holds if we replace $M(g_k')$ with $MM(g_k')$ 
 when $g_k'$ have finite order. 

%

\end{itemize}

\end{lemma}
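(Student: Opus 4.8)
The plan is to take all conjugators of the form $w_i=f^{n_i}c_i$ for a single fixed loxodromic $f\in F$, where $c_i\in G$ first pushes the relevant set far off $axis(f)$ and then $f^{n_i}$ slides it far out along $axis(f)$; the separation will then fall out of standard $\delta$-hyperbolic estimates. First the reduction: writing $g_i'=w_ig_iw_i^{-1}$ and using $M(wgw^{-1})=wM(g)$, $MM(wgw^{-1})=wMM(g)$, and setting $Z_i=MM(g_i)$ if $g_i$ has finite order and $Z_i=M(g_i)$ otherwise, the two bullets amount to choosing $w_i\in G$ so that the sets $W_i:=w_iZ_i$ --- each nonempty and $101\delta$-quasi-convex by Lemma \ref{min.set} and the Remark after it --- are $K$-separated for all $n$. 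A preliminary observation from the hypothesis: because $E(f)$ is cyclic, hence torsion free, for every nontrivial $f\in F$, and because $E(f_1)\cap E(f_2)=1$ for independent $f_1,f_2\in F$ (which we may assume exist, enlarging $F$ by Proposition \ref{schottky} if need be), the $G$-action on $X$ is faithful and no nontrivial element moves every point a bounded amount; in particular $Z_i\neq X$, and when $g_i$ is loxodromic $Z_i=M(g_i)\subseteq N_{510\delta}(axis(g_i))$ by the estimate in the proof of Lemma \ref{sublevel.set}.

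Now fix $f\in F$ nontrivial, so loxodromic with $E(f)$ cyclic, and put $\ell=axis(f)$. For each $i$, pick $c_i\in G$ with $d_X(c_iZ_i,\ell)\ge K+1000\delta$: this is possible for $g_i$ loxodromic because $c_iZ_i$ lies in a $510\delta$-neighborhood of the loxodromic axis $c_i\,axis(g_i)$, some $G$-translate of which lies far off $\ell$, and for $g_i$ of finite order by the ``no coarse kernel'' observation applied to (translates of) $\ell$. Since every nontrivial element of $E(f)$ has minimal set inside $N_{510\delta}(\ell)$, this inequality forces $c_ig_ic_i^{-1}\notin E(f)$, and for finite order $g_i$ each nontrivial power $(c_ig_ic_i^{-1})^m$ is likewise outside the torsion-free group $E(f)$; hence Lemma \ref{projection.bdd} gives that $\pi_\ell(c_iM(g_i^m))$ is bounded for each relevant $m$, so $\pi_\ell(c_iZ_i)$ is bounded, with some coarse center $p_i\in\ell$. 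Finally choose $n_i>0$ increasing so rapidly that the points $P_i:=f^{n_i}(p_i)$ are spaced along $\ell$ by more than $K$ plus every additive constant occurring below (at stage $n$ there are only finitely many). With $w_i=f^{n_i}c_i$ and $W_i=w_iZ_i$ we then have: $\pi_\ell(W_i)$ has bounded diameter about $P_i$, and $W_i$ lies at distance $\ge K+1000\delta$ from $\ell$ --- it ``sits above'' $P_i$.

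Verifying $K$-separatedness from this picture is then routine, using the standard fact that $d(x,y)=d(x,\ell)+d(\pi_\ell x,\pi_\ell y)+d(\ell,y)$ up to $O(\delta)$ whenever $d(\pi_\ell x,\pi_\ell y)$ is large compared with $\delta$. Two sets $W_i,W_j$ sit above points $P_i,P_j$ at huge distance apart, whence $d(W_i,W_j)\ge K$. For $K$-terminality, let $\gamma$ be a shortest geodesic between $W_j$ and $W_k$ with $j,k\neq i$: its endpoints project near $P_j,P_k$ and sit at height $\ge K+1000\delta$ over them, so $\gamma$ passes within $O(\delta)$ of $P_j$ and $P_k$ and runs within $O(\delta)$ of $\ell$ along the sub-segment of $\ell$ between them, staying over the bounded footprints of $W_j,W_k$ elsewhere. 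If $P_i$ lies on that sub-segment, then near $P_i$ the geodesic $\gamma$ is at height $O(\delta)$, hence at distance $\ge K+1000\delta-O(\delta)\ge K$ from $W_i$; otherwise the footprint of $W_i$ about $P_i$ is far from that of $\gamma$ and $d(W_i,\gamma)\ge K$ again follows from the same estimate. Thus each $W_i$ is $K$-terminal and the $W_i$ are $K$-separated; the $MM$-version is the identical computation once $Z_i=MM(g_i)$ is placed as above, noting that $\pi_\ell(MM(g_i'))$ is a finite union of bounded sets near $P_i$.

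I expect the finite-order case to be the real obstacle: one must conjugate $MM(g_i)$ --- a finite union of possibly unbounded coarse fixed sets $M(g_i^m)$ --- so that it lies \emph{uniformly} (by $K+1000\delta$) off $\ell$ while keeping its $\ell$-projection bounded. This is exactly where the hypothesis that $E(f)$ is cyclic for all nontrivial $f\in F$ (equivalently, that the maximal finite normal subgroup of $G$ is trivial) enters, through the faithfulness and ``no coarse kernel'' consequences above, which let one push each $M(g_i^m)$ off $\ell$ and combine the pushes; one should also make sure the non-uniformity of the bounds in Lemma \ref{projection.bdd} is harmless, which it is since the $n_i$ are chosen only after the relevant finitely many bounds are known.
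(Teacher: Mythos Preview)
Your approach differs from the paper's. Instead of a single axis $\ell$, the paper uses infinitely many diverging axes: it sets $f_i=ab^i$ for generators $a,b$ of a rank-two Schottky subgroup, chooses $n_1<n_2<\cdots$ with $g_i\notin E(f_{n_i})$ (possible since each $E(f_{n_i})$ is cyclic), and puts $g_i'=f_{n_i}^{L_i}g_if_{n_i}^{-L_i}$ for large $L_i$. Since $\pi_{axis(f_{n_i})}(M(g_i))$ is bounded by Lemma~\ref{projection.bdd}, each $M(g_i')$ sits far out along its own branch $axis(f_{n_i})$ of the quasi-isometrically embedded Cayley tree of $F$. Both pairwise distance and $K$-terminality are then read off this tree picture: any shortest geodesic between two of the sets passes through the bounded branching region, hence far from the others. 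No ``height above an axis'' is ever needed --- bounded projection to the relevant axis suffices.

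Your single-axis scheme, by contrast, genuinely needs the preliminary step $d(c_iZ_i,\ell)\ge K+1000\delta$: without it, when $P_i$ lies between $P_j$ and $P_k$ on $\ell$, the shortest geodesic $\gamma$ from $W_j$ to $W_k$ runs within $O(\delta)$ of $\ell$ near $P_i$, and if $W_i$ meets $\ell$ there you get $d(W_i,\gamma)=O(\delta)$, violating $K$-terminality. This ``push off'' step is where your argument has a gap. For loxodromic $g_i$, the assertion that ``some $G$-translate of $axis(g_i)$ lies far off $\ell$'' is not justified --- bounded projection to $\ell$ does not imply distance from $\ell$. For finite-order $g_i$, the ``no coarse kernel'' observation gives only $M(g_i^m)\neq X$, which is far weaker than producing a single $c_i$ with every $c_iM(g_i^m)$ uniformly far from $\ell$; ``combine the pushes'' does not say how. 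The gap is reparable: take $h\in F$ independent of $f$, note that (after a preliminary conjugation if $g_i\in E(h)$) every nontrivial power of $g_i$ lies outside the torsion-free group $E(h)$, so $\pi_{axis(h)}(Z_i)$ is bounded by Lemma~\ref{projection.bdd}; then $c_i=h^n$ for large $n$ pushes $Z_i$ far from $\ell$, since $\pi_{axis(h)}(\ell)$ is also bounded and nearest-point projection is coarsely $1$-Lipschitz. But this needs to be written out; the paper's many-axes approach sidesteps the issue entirely.
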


\begin{proof}

We first prepare a sequence of elements in $F$ that we will 
use to conjugate $g_i$ to $g_i'$.
Take two elements $a,b \in F$ that produce a free subgroup 
of rank two. 
Put $f_i= ab^i, i \ge 1$.

We describe a geometric property we use about the 
sequence of elements. 
Fix a point $x \in X$. Given a constant $L>0$, if we choose
$P$ sufficiently large, then for any $n>0$, and any 
$P_i \ge P$, the following points
are $L$-separated: 
$$x, f_1^{P_1}(x), f_2^{P_2}(x), \cdots, f_n^{P_n}(x).$$
This is an easy consequence of the property such that 
the embedding of the Cayley graph of $F$ in $X$ using 
the orbit of the point $x$ is quasi-isometric to the image. 

Note that the subsets in the above are $L$-separated
if we replace the point $x$
by a bounded set, possibly taking a larger constant for $P$.

To define  $g_1', g_2', g_3', \cdots$, 
choose a sequence 
$$1 \ne n_1 < n_2 < n_3 < \cdots$$ such that 
for each $i$, 
$g_i \not\in E(f_{n_i})$. This is clearly possible.
Then by Lemma \ref{projection.bdd}, the projection of $M(g_i)$ 
to $axis(f_{n_i})$ is bounded. 
Moreover, if the order of $g_i$ is $N<\infty$, then 
$<g_i> \cap E(f_{n_i}) =1$, therefore the projection 
of $MM(g_i)$ to $axis(f_{n_i})$ is bounded
since each $M(g_i^n), 0<n<N$ has a bounded projection. 

Now take a sequence of sufficiently large constants $L_i >0 $, 
depending on the given constant $K$, and 
put $g_i'=f_{n_i}^{L_i} g_i f_{n_i}^{-L_i}$.
Then for each $n>0$,  the sets $M(g_{1}'), \cdots, M(g_{n}')$
are $K$-separated since 
$M(g_i') = f_{n_i}^{L_i}(M(g_i))$.
Also we can arrange so that the sets remain $K$-separated 
if we  replace $M(g_i')$
with $MM(g_i')$ if the order of $g_i'$ are finite, 
maybe for larger constants $L_i$. 
%
%
%
%
%
\end{proof}
%
%
\noindent
{\it Proof of Theorem \ref{notIG}}.
\\
{\it Case 1}. 
Assume $G$ does not contain any non-trivial finite normal 
subgroup. 

By Proposition \ref{schottky}, there is a Schottky subgroup 
$F<G$ such that any non-trivial element $f\in F$ is hyperbolic and $E(f)$
is cyclic.

Let $K>0$ be a constant from Theorem \ref{remark.torsion} for $Q=100\delta$.
Let $\C=\{g_1, g_2, \cdots \}$ be a set of 
all conjugacy classes of $G$ except for the class for $1$.
Apply lemma \ref{conjugation} to the set $\C$ and the constant  $K$
and obtain a new set $\C'=\{g_i'\}$. 
For each $i>0$, $M(g_i')$ is a non-empty, $g_i'$-invariant,
$100\delta$-quasi-convex subset.

Now, for each $n>1$, the assumptions (b) and (c) of Theorem \ref{remark.torsion} are
satisfied by the subgroups $<g_i'>, 1 \le i \le n$ and  the sets $M(g_i'), 1 \le i \le n$.  
If $g_k'$ has finite order, then take $M(g_k') \subset MM(g_k')$ as the
desired neighborhood.
Then by Lemma  \ref{conjugation} they are $K$-separated,
which implies (b) and (c). 

We claim that  (a) holds for $g_k'$ that has finite order.
But for any point $x \in X-MM(g_k')$, and any
non-trivial $h \in <g_k'>$, we have $|h(x)-x| \ge 1000 \delta$
(Lemma \ref{neighborhood.MM}).
This implies (a).


It now follows from Theorem \ref{remark.torsion}  that the subgroup generated by $g_1', \cdots, g_n'$ is the
free product $<g_1'> * \cdots *<g_n'>$ for each $n>0$.

To finish, first suppose that $\C'$ is a finite set, $\{g_1', \cdots, g_n'\}$.
Then $<g_1'> * \cdots *<g_n'>$ must be a proper subgroup of $G$
(therefore $G$ is not IG) 
since otherwise $G$ contains infinitely many conjugacy classes, 
a contradiction (by our assumption, $G$ is not virtually cyclic).

Second, we assume that $\C'$ is an infinite set in the following. 
%
To argue by contradiction, assume that $G$ is generated by $\C'$.

In $<g_1'>*<g_2'>*<g_3'>$, it is easy to 
choose elements $g_1'', g_2'', g_3''$ such that 
each $g_i''$ is conjugate to $g_i'$, and 
the subgroup generated by $g_1'', g_2'', g_3''$ is a proper
subgroup of $<g_1'>*<g_2'>*<g_3'>$.

Define $\C''$ from $\C'$ by replacing $g_1', g_2', g_3'$
by $g_1'', g_2'', g_3''$. $\C''$ contains all non-trivial conjugacy classes
of $G$.
We claim that the subgroup, $G_1$, generated by $\C''$ is a proper subgroup 
in $G$ (so $G$ is not IG). 
To see that, define a quotient homomorphism from $G$ to the group  $H=<g_1'>*<g_2'>*<g_3'>$ by sending all $g_i', i>3$ to $1$. Then 
the image of $G_1$ is a proper subgroup in $H$, so $G_1$ is a proper subgroup
in $G$.
\\
\\
{\it Case 2}. Assume that $G$ contains a non-trivial finite normal subgroup.

Let $K$ be the maximal finite normal subgroup in $G$.
Then $G'=G/K$ does not contain any non-trivial finite normal subgroup.
Moreover $G'$ is acylindrically hyperbolic group. Probably this fact
is well known to specialists, and we postpone giving an argument till the end
(Proposition \ref{quotient}).

By Case 1, $G'$ contains a proper subgroup $H'$ that contains
all conjugacy classes of $G'$. Let $H<G$ be the pull-back 
of $H'$ by the quotient map $G \to G'$.
Then $H$ is a proper subgroup that contains all conjugacy classes
of $G$, therefore $G$ is not IG. 
\qed

\begin{prop}\label{quotient}
Let $G$ be an acylindrically hyperbolic group and $N<G$ a finite 
normal subgroup. Then $G'=G/N$ is an acylindrically hyperbolic group. 

\end{prop}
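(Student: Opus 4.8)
The plan is to produce an acylindrical action of $G'=G/N$ on a (possibly different) hyperbolic space, starting from an acylindrical action of $G$ on a hyperbolic space $X$ on which $G$ acts with unbounded orbits (which exists since $G$ is acylindrically hyperbolic and not virtually cyclic). The first, cheap observation is that the finite normal subgroup $N$ has bounded orbits in $X$: indeed, by Lemma \ref{center} each $n\in N$ satisfies $\min(n)\le 6\delta$, and since $N$ is finite the whole orbit $N\cdot x_0$ is bounded, say of diameter $\le R_0$ for a fixed basepoint $x_0$. Replacing $X$ by the orbit $G\cdot x_0$ with the induced (geodesic-ish) metric and remembering it is quasi-isometric to a genuine geodesic hyperbolic space, we lose nothing.

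The main step is to pass to the quotient of the space by $N$. The natural candidate is the quotient metric space $Y=X/N$, with $d_Y(Nx,Ny)=\min_{n,n'\in N} d_X(nx,n'y)=\min_{n\in N} d_X(x,ny)$; since $N$ is finite this minimum is attained and $d_Y$ is an honest metric. The group $G$ acts on $Y$ (because $N$ is normal, so the $G$-action descends), $N$ acts trivially, and hence $G'=G/N$ acts on $Y$. One checks $Y$ is again $\delta'$-hyperbolic: because $N$ acts on $X$ with orbits of diameter $\le R_0$, the quotient map $X\to Y$ is a $(1,R_0)$-quasi-isometry onto its image in a coarse sense — more precisely every point of $Y$ lifts, geodesics in $X$ map to $R_0$-quasi-geodesics in $Y$, and a standard argument (thin triangles survive quotients by uniformly bounded group actions, cf. the discussion of quasi-convex sets and coarse maps in Section 2) shows $Y$ is hyperbolic with constant depending only on $\delta$ and $R_0$. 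Also $G'$ acts on $Y$ with unbounded orbits since $G\curvearrowright X$ does and the projection distorts distances by at most $R_0$, so $G'$ is not virtually cyclic.

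It remains to verify that the $G'$-action on $Y$ is acylindrical, and this is the step I expect to be the main obstacle — though it is quite mechanical. Fix $R>0$. We want $L'(R), N'(R)$ so that any two points of $Y$ at distance $\ge L'$ have at most $N'$ elements of $G'$ coarsely fixing both. Lift: given $Nv, Nw\in Y$ with $d_Y\ge L'$, pick representatives $v,w\in X$; then $d_X(v,w)\ge L'$. If $\bar g=gN\in G'$ moves $Nv,Nw$ by at most $R$ in $Y$, then there are $n_1,n_2\in N$ with $d_X(v, n_1 g v)\le R+R_0$ and $d_X(w, n_2 g w)\le R+R_0$; the elements $n_1 g$ and $n_2 g$ both lie in the coset $gN$, which has size $|N|$, so up to a factor $|N|^2$ it suffices to bound, for each fixed pair $(n_1,n_2)$, the number of $g\in G$ with $d_X(v,n_1 g v), d_X(w, n_2 g w)\le R+R_0$. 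Choosing $L'=L(R+2R_0)$ with $L(\cdot)$ the acylindricity function of $G\curvearrowright X$, and observing $n_1 g$ ranges over a set controlled by acylindricity applied with $v$ and $w$ (the $n_i$ being absorbed into the constant), one gets at most $|N|^2\cdot N(R+2R_0)$ possibilities for the coset $gN=\bar g$. Hence the action of $G'$ on $Y$ is acylindrical with $L'(R)=L(R+2R_0)$ and $N'(R)=|N|^2 N(R+2R_0)$, so $G'$ is acylindrically hyperbolic. $\qed$
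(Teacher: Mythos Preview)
Your approach is essentially the paper's: quotient $X$ by the $N$-action to get $Y=X/N$, check $Y$ is hyperbolic (via a quasi-isometry $X\to Y$) and that the induced $G'$-action is acylindrical. The paper carries this out at the level of a Cayley graph (vertices of $Y$ are $N$-orbits of vertices of $X$, edges between orbits at $X$-distance $1$), but the construction and the verification are the same, and your acylindricity count is more explicit than the paper's.

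There is, however, one genuine gap. Your claim that $X\to Y$ is a quasi-isometry requires \emph{all} $N$-orbits to have uniformly bounded diameter, not just $N\cdot x_0$. This is false for a general acylindrical action: a finite-order isometry of a hyperbolic space can have unbounded displacement away from its coarse fixed set (think of a rotation of $\mathbb{H}^2$). Your sentence ``Replacing $X$ by the orbit $G\cdot x_0$'' is the right instinct---on a single $G$-orbit, normality of $N$ gives $N\cdot(gx_0)=g\cdot(Nx_0)$, so every $N$-orbit there has diameter $\le R_0$---but you then need the orbit (or a thickening of it) to still be a hyperbolic space on which $G$ acts acylindrically. That is not automatic from the definitions; the paper fills exactly this gap by invoking \cite[Theorem 1.2]{Osin}, which says one may take $X$ to be a Cayley graph of $G$, so the action is cobounded. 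With that citation in place your argument goes through.

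One minor correction: ``unbounded orbits'' alone does not prevent $G'$ from being virtually cyclic ($\Z$ acting on $\R$ is acylindrical with unbounded orbits). The right observation is simply that a quotient of a non-virtually-cyclic group by a finite normal subgroup is again non-virtually-cyclic.
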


\begin{proof}
By assumption $G$ acts on a hyperbolic space $X$ such that 
the action is acylindrical and $G$ is not virtually cyclic.
Moreover we may assume that the action is co-compact.
In fact, we may assume that $X$ is a Cayley graph with 
a certain generating set, which is maybe infinite, \cite[Theorem 1.2]{Osin}.

We will produce a new $G$-graph $Y$ from $X$ such that 
the kernel of the action contains $N$ and that $Y$ and $X$ are quasi-isometric.
This is a desired action for $G'$.

For each $N$-orbit of a vertex of $X$, we assign a vertex of $Y$.
Note that $G$ is transitive on the set of $N$-orbits of vertexes
of $X$, so $G$ acts transitively on the vertex set of $Y$.
Now join two vertices of $Y$ if the distance of the 
corresponding $N$-orbits in $X$ is $1$.
$Y$ is a connected $G$-graph and it is easy to check that $Y$ and 
$X$ are quasi-isometric (by the obvious map sending a vertex $x$
of $X$ to the vertex of $Y$ corresponding to the orbit 
of $x$), so that $Y$ is hyperbolic, and that 
the $G$-action on $Y$ is acylindrical. 
By construction, $N$ acts trivially on $Y$.
\end{proof}

\end{document}